\newtheorem{thm}{Theorem}[section]
\newtheorem{dfi}{Definition}[section]
\newtheorem{prop}{Proposition}[section]
\newtheorem{lm}{Lemma}[section]
\newtheorem{cor}{Corollary}[section]
\newtheorem{rem}{Remark}[section]
\newcommand{\mb}[1]{\mathbb{#1}}
\newcommand{\mf}[1]{\mathfrak{#1}}
\newcommand{\mc}[1]{\mathcal{#1}}
\newcommand{\rr}{\mathbb{R}}
\newcommand{\zz}{\mathbb{Z}}
\newcommand{\cc}{\mathbb{C}}
\newcommand{\al}{\alpha}
\newcommand{\dl}{\delta}
\newcommand{\ep}{\varepsilon}
\newcommand{\Gm}{\Gamma}
\newcommand{\gm}{\gamma}
\newcommand{\lmd}{\lambda}
\newcommand{\om}{\omega}
\newcommand{\Om}{\Omega}
\newcommand{\nb}{\nabla}
\newcommand{\tht}{\theta}
\newcommand{\cx}{\mathcal{X}}
\newcommand{\la}{{\lambda}^{-\frac{2}{2 + \al}}}
\newcommand{\power}{\frac{2}{2 + \al}}
\newcommand{\powerr}{\frac{\al}{2 + \al}}
\newcommand{\cl}{c^{\lambda}}
\newcommand{\yl}{y_{\lambda}}
\newcommand{\xl}{x_{\lambda}}
\newcommand{\yb}{\bar{y}}
\newcommand{\Mb}{\bar{M}}
\newcommand{\lt}{\lambda t}
\newcommand{\mbb}{\bar{m}}
\newcommand{\ro}{\rho}
\newcommand{\Gmr}{\Gm(\xb; \rho)}
\newcommand{\rob}{\bar{\rho}}
\newcommand{\ron}{\rho_n}
\newcommand{\robn}{\bar{\rho}_n}
\newcommand{\dn}{\delta_n}
\newcommand{\yln}{y_{\lambda_n}}
\newcommand{\zln}{z_{\lambda_n}}
\newcommand{\xln}{\xi_{\lambda_n}}
\newcommand{\Vb}{\bar{V}}
\newcommand{\ttn}{\theta_n}
\newcommand{\tna}{t_n^-}
\newcommand{\tnb}{t_n^+}
\newcommand{\h}{H^1}
\newcommand{\aet}{A_{\frac{T}{2}}}
\newcommand{\xb}{\bar{x}}
\newcommand{\cb}{\bar{c}}
\newcommand{\lb}{\bar{L}}
\newcommand{\ab}{\bar{A}}
\newcommand{\vb}{\bar{V}}
\newcommand{\ey}{\frac{1}{2}}
\newcommand{\gts}{\Gm^*_T(\tau)}
\newcommand{\gtt}{\Gm_T(\tau) }
\newcommand{\gmp}{\Gm^+_{\dl}(y)}
\newcommand{\gmm}{\Gm^-_{\dl}(y)}
\newcommand{\aly}{\frac{2}{2 + \al}}
\newcommand{\ale}{\frac{2 \al}{2 + \al}}
\newcommand{\vlm}{V_{\lmd}}
\newcommand{\llm}{L_{\lmd}}
\newcommand{\alm}{A^{\lmd}}
\newcommand{\aln}{A^{\lmd_n}}
\newcommand{\lmn}{\lmd_n}
\newcommand{\xr}{x_{\rho}}
\newcommand{\tr}{t_{\rho}}
\newcommand{\tb}{\bar{T}}
\newcommand{\xt}{\tilde{x}}
\newcommand{\qt}{\tilde{q}}
\begin{document}

\title[periodic solutions topological constraints]{Periodic Solutions of the Planar N-Center Problem with topological constraints}
\author{Guowei Yu}

\address{Department of Mathematics, University of Toronto
}

\begin{abstract} 
In the planar $N$-center problem, for a non-trivial free homotopy class of the configuration space satisfying certain mild condition, we show that there is at least one collision free $T$-periodic solution for any positive $T.$ We use the direct method of calculus of variations and the main difficulty is to show that minimizers under certain topological constraints are free of collision. 
\end{abstract}

\maketitle

\section{Introduction} \label{sec intro}

In the study of the classic $N$-body problem, or the general singular Lagrangian systems, one of the oldest ideas in finding periodic solution, at least goes back to Poincar\'e, is by looking for minimizers of the action functional in certain admissible classes of loops. 

Besides the usual coercive condition, comparing with regular systems, the extra difficulty is to show that the desired minimizer is free of singularity. Otherwise we get something called generalized solutions, see \cite{BR89}, \cite{AC93}. 

If the singularities are caused by the \emph{strong force} potentials (see Remark \ref{potential} for the precise definition), then the extra difficulty is gone (again already known to Poincar\'e). As in this case the action functional along any loop with singularity must be infinite. All kinds of periodic solutions can be found using variational methods, for example see see \cite{Go75}, \cite{AC93}, \cite{Mo98} and \cite{CGMS02}.

However when the singularities are caused by the \emph{weak force} potentials, including the Newtonian potential, then the action functional along a loop with singularities may still be finite. In fact by the results of Gordon \cite{Go77} and Venturelli \cite{Ve01}, we know in certain cases there are minimizers with singularities. 

Started with the (re)-discovery of the Hip-Hop solution \cite{CV01} in the Newtonian Four-body problem and the Figure-Eight solution \cite{CM00} in the Newtonian three-body problem. We have learned that one of the method to overcome the above problem is to impose certain symmetric constraints on the admissible class of loops. For the details please see \cite{FT04}, \cite{BFT08},  \cite{C02}, \cite{Ch03}, \cite{Ch08}, \cite{FGN11}, \cite{Sh14} and the references within.

On the other hand very few results are available when topological constraints are involved and it is still a difficult task to determine whether a minimizer is free of collision in this case. 



In order to clarify and overcome the difficulties related to the topological constraints, in this paper we propose to study the simplified model, namely the planar $N$-center problem, with $N \ge 2$. Since all the natural symmetries of the $N$-body problem do not exist anymore, it will be a perfect test ground for various techniques that have been developed by many people in the past twenty years. The main result of our paper is some simple criteria on the topological constraints that will ensure the existence of collision free minimizers (in the $N$-center problem the singularities are caused by collisions between the test particle and a center). 

In forth coming papers we will show that some of the results proved in this paper can be generalized to the $N$-body problem.

In the planar N-center problem, a test particle is moving in the plane under the gravitational field of $N$ fixed point masses (\emph{centers}). We use $\{ m_j > 0: j= 1, \dots, N \}$ and $\mc{C}:= \{ c_j \in \cc: j = 1, \dots, N \}$ ( $c_j \ne c_k ,$ if $j \ne k$) to denote the masses and positions of the $N$ centers correspondingly.

If $x: \rr \to \cx$, where $\cx: =\cc \setminus \mc{C},$ is the position function of the test particle, then $x(t)$ should satisfy the following second order differential equation:

\begin{equation}
 \label{ncenter} \ddot x(t) = \nb V(x(t))=  - \sum_{j = 1}^{N} \frac{m_j}{|x(t) - c_j|^{\al+2}}(x(t) - c_j),
\end{equation}

for $\al >0$ and
$$ V(x(t)) : = \sum_{j=1}^{N} \frac{m_j}{\al |x(t) - c_j|^{\al}}, $$
is the negative potential at $x(t)$.

\begin{rem} \label{potential}
        In general, $\al \ge 2$ correspond to the \emph{strong force} potentials, $0<\al <2$ correspond to the \emph{weak force} potentials and in particular $\al=1$ corresponds the Newtonian potential. In this paper we will not discuss $\al \in (0,1)$ and to distinguish with the Newtonian case, by weak force potential, we will only mean $\al \in (1, 2). $
\end{rem}

Equation \eqref{ncenter} has a natural variational formulation.  It is the Euler-Lagrange equation of the action functional
$$ A(x) = \int L(x(t), \dot{x}(t)) \, dt, $$
where the Lagrangian $L$ has the form
$$ L(x, \dot{x}): = \frac{1}{2} |\dot{x}|^2 + V(x). $$

For any $T_1 < T_2$, $T>0$ and $U \subset \cc$, we let $H^1([T_1, T_2], U)$ be the space of all Sobolev functions mapping $[T_1, T_2]$ to $U$ and $H^1(S_T, U)$ be the space of all $T$-periodic Sobolev loops contained in $U$, where $S_T: = [-\frac{T}{2}, \frac{T}{2}] / \{ -\frac{T}{2}, \frac{T}{2} \}.$ 

Given an $x \in H^1([T_1, T_2], U)$, define its action as
$$ A([T_1, T_2]; x) = \int_{T_1}^{T_2} L (x(t), \dot{x}(t)) \,dt. $$
In particular if $T= -T_1 = T_2 >0$, then we set
$$ A_T(x) := A([-T, T]; x). $$


For any nontrivial free homotopy class $\tau \in \pi_1(\cx) \setminus \{0\}$ and $T>0$, let 
 $$ \gts : = \{ x \in H^1(S_T, \cx): [x] = \tau\} $$
and $\gtt$ be the weak closure of $\gts$ in $H^1(S_T, \cc)$.

Our goal is to find some simple criteria on $\tau$, such that the following infimimum 
$$ c_{T}(\tau) = \inf \{ A_{T/2}(x): x \in \gtt \}$$
can be achieved at some collision free loop from $\gts$.

First we will introduce some notations. Given an $x \in H^1(S_T, \cx)$, we say it is a \textbf{generic} $T$-periodic loop, if $x$ is a smooth immersion in \textbf{general position} (for a definition, see page 82, \cite{Hi76}). Such a loop only contains transverse self-intersections and we say it has \textbf{excess self-intersection} if it can be homotopic to another generic loop with fewer self-intersection (see \cite{HS85}).  

If $I \subset S_T$ is a sub-interval with $x$ identifies the end points of $I$, we say $x|_I$ is a sub-loop of $x$. Such a sub-loop is called \emph{innermost}, if it is also a \emph{Jordan curve}. As a result each innermost sub-loop separates the plane into two disjoint regions, one bounded and the other not. We say a center is \emph{enclosed} by an innermost sub-loop if it is contained in the bounded region. 

\begin{dfi}
	Given a $\tau \in \pi_1(\cx)$, choose an arbitrary generic loop $x \in \gts$ without any excess self-intersection. We say $\tau$ is \textbf{admissible}, if any innermost sub-loop of $x$ encloses at least two different centers.
\end{dfi}

\begin{figure}
	\centering
	\includegraphics[scale=0.7]{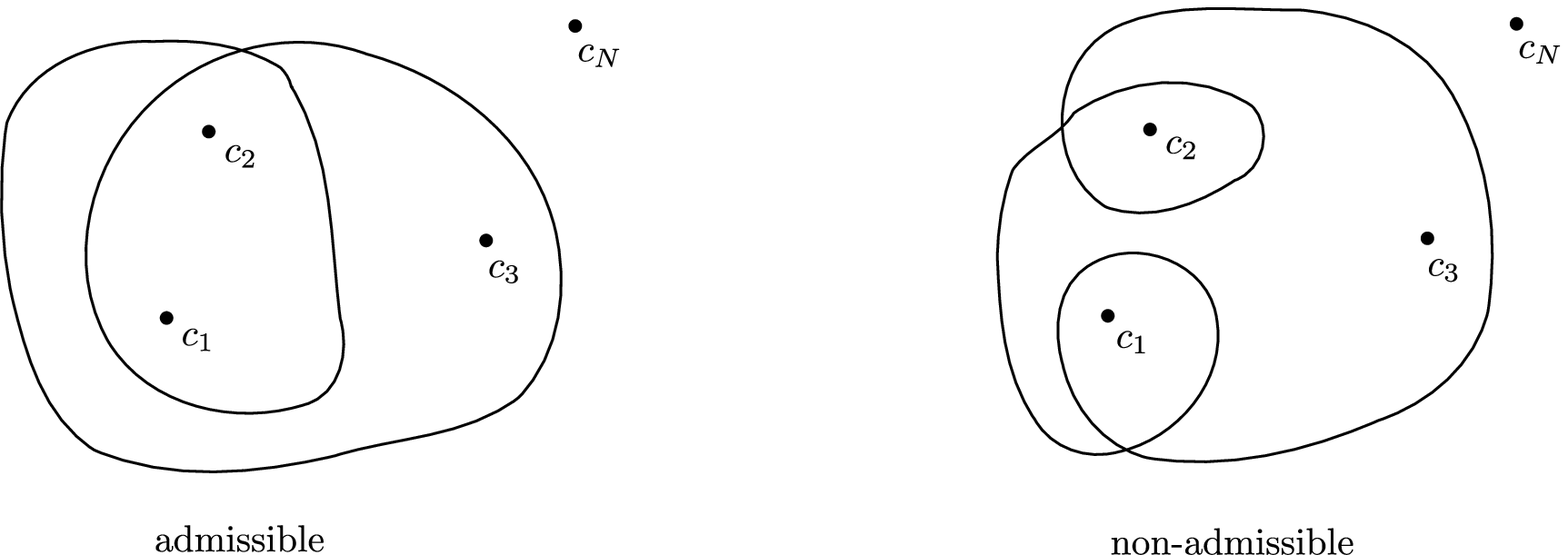}
	\caption{}
\end{figure}

Now we are ready to state the main results of our paper.

\begin{thm} \label{thm 1}
  When  $\al \in (1, 2)$, for any admissible free homotopy class $\tau \in \pi_1(\cx)$ and $T>0$, there is a $q \in \gts$ with  $A_{T/2}(q) = c_{T}(\tau)$ and it is a $T$-periodic solution of equation \eqref{ncenter}. 
\end{thm}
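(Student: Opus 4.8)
The plan is to apply the direct method of the calculus of variations to $A_{T/2}$ over $\gtt$, and then to devote the bulk of the work to showing that the resulting minimizer is collision free. First I would record that $c_{T}(\tau)$ is finite (evaluate $A_{T/2}$ on any generic representative of $\tau$) and that $\inf_{\gtt}A_{T/2}=\inf_{\gts}A_{T/2}$, so that a minimizing sequence $\{x_n\}$ may be taken inside $\gts$ with $[x_n]=\tau$. A bound on the action controls $\int|\dot x_n|^2$ through the kinetic term, and by a Wirtinger/Poincar\'e inequality on $S_T$ the oscillation $\|x_n-\bar x_n\|_\infty$ is then bounded; escape to infinity is ruled out because a loop of bounded oscillation whose mean diverges would eventually lie in a simply connected region of $\cx$ disjoint from $\mc{C}$, hence be contractible, contradicting $[x_n]=\tau\neq 0$. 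Since $\gtt$ is weakly closed by construction and $A_{T/2}$ is weakly lower semicontinuous (convex kinetic part, continuous nonnegative potential part), the infimum is attained at some $q\in\gtt$.

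Away from its collision set $C(q):=\{t: q(t)\in\mc{C}\}$, the loop $q$ is an interior minimizer in the open manifold $\cx$, hence a smooth solution of \eqref{ncenter}, and conservation of energy gives that $\tfrac12|\dot q|^2-V(q)$ is constant along each collision-free arc. Using local action estimates of Sundman--Wintner type together with this energy relation, I would show that $C(q)$ is finite, so that collisions are isolated in time and each is a collision of the test particle with a single center $c_j$. At such an isolated collision $q(t_0)=c_j$, the standard blow-up and asymptotic analysis (valid in the range $\al\in(1,2)$) shows that $q$ approaches $c_j$ asymptotic to a rectilinear ejection--collision solution, so that the incoming and outgoing branches have definite tangent directions at $c_j$.

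It remains to exclude collisions, which is the heart of the argument and the reason for restricting to $\al\in(1,2)$. Fix a small disk $D$ about $c_j$ containing no other center, and let $p_-,p_+$ be the points where the colliding arc meets $\partial D$. Rel the endpoints $p_\pm$, the arcs in $\overline{D}\setminus\{c_j\}$ are classified by winding number around $c_j$, and the colliding arc is the borderline between two adjacent winding classes. A Gordon-type strict inequality shows that for $\al>1$ the \emph{non-enclosing} of these two resolutions has action strictly smaller than the colliding arc (this is precisely where $\al=1$ fails, the collision being a minimizer in the Newtonian case). If this cheaper detour keeps $q$ in the class $\tau$, we obtain a competitor in $\gts\subset\gtt$ with strictly smaller action, contradicting the minimality of $q$. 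If instead the cheaper, non-enclosing detour leaves $\tau$, then $\tau$ must force $q$ to wind around $c_j$ along this arc; that essential winding, together with the nearby structure of $q$, produces an innermost sub-loop enclosing the single center $c_j$ and no other, which persists in any generic no-excess representative of $\tau$ and so contradicts the \textbf{admissibility} of $\tau$. Either way a collision is impossible.

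I expect this last dichotomy to be the main obstacle: it requires combining the local asymptotic picture at the collision, the strict action comparison special to $\al\in(1,2)$, and careful topological bookkeeping that ties the homotopy effect of the one-sided detour to the innermost sub-loop structure controlled by admissibility (here I would also use that a minimizer can be taken without excess self-intersections, so its essential innermost loops match those of the generic representatives in the definition of admissibility). Once collisions are excluded, $q\in\gts$ is an interior minimizer of $A_{T/2}$ in $\cx$, and therefore a smooth $T$-periodic solution of \eqref{ncenter} with $A_{T/2}(q)=c_{T}(\tau)$, as claimed.
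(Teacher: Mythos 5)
Your overall architecture matches the paper's: direct method for existence (your Wirtinger-plus-nontriviality coercivity argument is an acceptable substitute for the paper's index-based one), isolated collisions with conserved energy, blow-up and asymptotic angles $\tht_\pm$ at a collision, a local deformation beating the colliding arc, and admissibility together with no-excess-self-intersection representatives to control the topology. The gap is in the final dichotomy, which you correctly identify as the heart of the matter but resolve incorrectly. At an isolated collision the two relevant competitors are the two \emph{adjacent} resolutions of the colliding arc, with prescribed argument jumps $\tht_+-\tht_-$ and $\tht_+-\tht_--2\pi$ (both of absolute value at most $2\pi$ after normalization). For $\al\in(1,2)$, Theorem \ref{thm 3} shows that \emph{both} of these are strictly cheaper than the colliding arc, and this is essential: admissibility forces the nearby approximating arcs $q_n|_{[-2\dl,2\dl]}$ to be simple (a self-intersection there would create an innermost sub-loop inside a small disk, enclosing at most one center), and simplicity guarantees that \emph{one} of the two resolutions is homotopic to $q_n|_{[-2\dl,2\dl]}$ rel endpoints in the punctured disk --- but one cannot control \emph{which} one. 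Your argument keeps only the non-enclosing resolution as a competitor and, for the case where it changes the class, claims that the forced winding ``produces an innermost sub-loop enclosing the single center $c_j$ and no other.'' That implication is false: a simple arc can enter the disk, spiral once around $c_j$, and exit, so that only the \emph{enclosing} resolution preserves the class, while the arc has no sub-loop at all and admissibility is untouched. In that scenario your argument yields no contradiction; the paper's does, precisely because for $\al>1$ the enclosing resolution is also strictly cheaper.

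A secondary misstatement: your parenthetical has the Newtonian failure backwards. By Gordon's theorem it is the full-winding resolution, $|\phi_+-\phi_-|=2\pi$, that fails to beat the collision when $\al=1$; the non-enclosing one still wins strictly. This is exactly why the argument only loses control in the Newtonian case when $|\tht_+-\tht_-|=0 \pmod{2\pi}$ (leading to the collision-reflection alternative of Theorem \ref{thm 2}), and why for $\al\in(1,2)$ the deformation step goes through unconditionally. Repairing your proof therefore amounts to replacing your dichotomy by: (i) admissibility $\Rightarrow$ the nearby arcs are simple; (ii) simplicity $\Rightarrow$ one of the two adjacent resolutions preserves $\tau$; (iii) $\al\in(1,2)$ $\Rightarrow$ both resolutions are strictly cheaper --- which is the paper's route.
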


For the Newtonian potential, we get a slightly weaker result. To state our result, we consider the Hurewicz homomorphism: $ \mf{h}: \pi_1(\cx) \to H_1(\cx) \cong \zz^N$, which in our case is just the canonical abelianization map. Given an free homotopy class $\tau$, choose a generic $x \in \gts$, we can define $\mf{h}(\tau)=(\mf{h}(\tau)_j)_{j=1}^N \in \zz^N$ as following
 $$ \mf{h}(\tau)_j = \text{ind}(x, c_j)= \frac{1}{2 \pi i}\int_{-T/2}^{T/2} \frac{dx}{x -c_j}, \quad \forall j =1, \dots, N, $$
where $\text{ind}(x, c_j)$ is the index of $x$ with respect to $c_j.$ It is independent of the choice of $x$, as long as $[x]=\tau.$

\begin{thm} \label{thm 2}
  When $\al =1$, for any admissible free homotopy class $\tau \in \pi_1(\cx)$ and $T>0$, there is a $q \in \gtt$ with $A_{T/2}(q) = c_{T}(\tau)$. Furthermore one of the following must be true:
 \begin{enumerate}
 	\item $q \in \gts$ and it satisfies equation \eqref{ncenter};
 	\item there are two centers $c_{k_1}, c_{k_2} \in \mc{C}$ (possibly $c_{k_1}= c_{k_2}$), a $\tb >0$ with $T / 2\tb \in \zz^+$ and $q$ satisfies
 	\begin{enumerate}
 		\item $ q(0) = c_{k_1}, q(\tb) = c_{k_2};$
 		\item $q(t) = q(-t) \in \cx$ and $q(t)$ satisfies equation,  \eqref{ncenter}, for any $t \in (0,\tb);$
 		\item $q(t + 2 \tb) = q(t),$ for any $t \in \rr.$

 	\end{enumerate}
 	
 \end{enumerate}
 The second case can happen only when $\tau$ satisfies 
 $$ \mf{h}(\tau)_j =0 , \quad \forall j \in \{1, \dots, N \} \setminus \{k_1, k_2 \}. $$
 \end{thm}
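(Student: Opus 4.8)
The plan is to combine the direct method with a local analysis of the collision set. The set $\gtt$ is weakly closed in $H^1(S_T,\cc)$ by definition, and $A_{T/2}$ is weakly lower semicontinuous, since its kinetic term is convex and its potential term is nonnegative. Coercivity holds by the standard argument for nontrivial classes: the constraint $[x]=\tau$ keeps the loop winding about the centers, which bounds its mean, so that Poincar\'e's inequality controls the full $H^1(S_T,\cc)$ norm in terms of the kinetic energy; hence the sublevel sets of $A_{T/2}$ are bounded and a minimizer $q\in\gtt$ exists. Away from the set where $q$ meets $\mc{C}$, the loop $q$ is a smooth solution of \eqref{ncenter} by standard regularity. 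If this collision set is empty, then $q$ avoids all centers, so nearby loops lie in the same free homotopy class, $q\in\gts$, and we are in case (1).

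Suppose instead that $q$ collides with $\mc{C}$. Using the finiteness of the Newtonian action near an isolated collision (Gordon-type estimates for $\al=1$), I would first show that the collision instants are isolated, hence finite in number on $S_T$, and that each one is a collision with a single center $c_j$ (a simultaneous collision with two distinct centers would cost infinite action). Near such an instant $t_0$, conservation of energy together with the vanishing of angular momentum at a Newtonian collision shows that the blow-up of $q$ about $(t_0,c_j)$, rescaled with the self-similar exponent $\power$, converges to a rectilinear ejection-collision solution of the Kepler problem centred at $c_j$.

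The crux of the argument, and the step I expect to be the main obstacle, is to upgrade this to the reflection identity $q(t_0+s)=q(t_0-s)$ for small $s>0$; equivalently, $q$ approaches and leaves $c_j$ along the same ray. I would argue this by local minimality: if the incoming and outgoing asymptotic directions differed, a cut-and-paste comparison replacing the trajectory near $t_0$ by its radial reflection would strictly lower the action while staying in $\gtt$. It is precisely here that admissibility of $\tau$ enters, to guarantee that such a reflected competitor does not create an innermost sub-loop enclosing a single center, so that it remains a valid competitor in the weak closure; in other words, admissibility excludes the competing single-center loop-collapse and forces every collision to be of the rectilinear brake type.

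Finally, I would propagate this symmetry globally. By the reflection identity and uniqueness of solutions of \eqref{ncenter} on collision-free arcs, $q(t)=q(2t_0-t)$ holds up to the neighbouring collisions. If $t_0<t_1$ are two consecutive collision instants, at centers $c_{k_1}$ and $c_{k_2}$, then composing the reflections about $t_0$ and $t_1$ produces the translation $q(t+2\tb)=q(t)$ with $\tb=t_1-t_0$; the same reflections show the collisions alternate between $c_{k_1}$ and $c_{k_2}$ at equal spacing, which is exactly the structure of case (2), and $T$-periodicity forces $T/2\tb\in\zz^+$. Since the resulting orbit merely retraces a single arc joining $c_{k_1}$ to $c_{k_2}$, its winding number about any center $c_j$ with $j\notin\{k_1,k_2\}$ vanishes. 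As $q$ is a weak limit of loops in $\gts$, each of which avoids all centers, the index about each such $c_j$ is preserved in the limit, and therefore $\mf{h}(\tau)_j=0$ for every $j\notin\{k_1,k_2\}$, completing the dichotomy.
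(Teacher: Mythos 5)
Your overall skeleton (existence of a minimizer, isolated collisions, blow-up, a local reflection identity, propagation to a periodic bouncing arc, and the index argument for $\mf{h}(\tau)_j=0$) matches the paper's, but the step you yourself identify as the crux is not actually carried out, and the mechanism you propose for it does not work. You claim that if the incoming and outgoing asymptotic angles $\tht_-,\tht_+$ at a collision differed, then ``replacing the trajectory near $t_0$ by its radial reflection would strictly lower the action.'' A radial reflection about a line through the center is an isometry of the Kepler-type Lagrangian, so it leaves the local action unchanged; it cannot produce a strict decrease. What the paper actually needs here is a genuinely hard analytic input (Theorem \ref{thm 3}, proved over an entire section via an obstacle problem for the Maupertuis functional): for $\al=1$ and $0<|\tht_+-\tht_-|<2\pi$ the parabolic collision--ejection solution is \emph{not} a minimizer of the fixed-endpoint problem among paths avoiding the center, and this is transferred back to the $N$-center minimizer through the blow-up convergence (Proposition \ref{deform}). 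By Gordon's theorem this deformation is impossible precisely when $|\tht_+-\tht_-|=0 \ (\mathrm{mod}\ 2\pi)$, so that residual case survives, and for it the reflection identity $q(t_0+s)=q(t_0-s)$ is obtained by a local Levi--Civita regularization $z^2=q-c_k$, $ds=|q-c_k|^{-1}dt$: one computes $\lim_{s\to 0^\pm}z'(s)=\tfrac13\mu e^{i\tht_\pm/2}$, so $|\tht_+-\tht_-|=2\pi$ forces $z'(0^+)=-z'(0^-)$, and uniqueness for the \emph{regularized} ODE (which is smooth through the collision) yields $z(s)=z(-s)$. Your appeal to ``uniqueness of solutions on collision-free arcs'' cannot substitute for this, since the symmetry must be established \emph{at} the collision, where \eqref{ncenter} is singular.

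Two further points. First, you misplace the role of admissibility: in the paper it is used (i) to guarantee that the minimizing sequence has no self-intersections near a collision, so that the local polar angle is well defined with $|\tht_+-\tht_-|\le 2\pi$, and (ii) to exclude the degenerate alternative in the propagation step, namely that the reflected arc reaches a brake point $\dot q(t_1)=0$ with $q(t_1)\notin\mc{C}$ before hitting a second center, which would make $q$ oscillate on a segment containing only one center. Your proposal omits the brake-point alternative entirely, and without ruling it out you cannot conclude that the second endpoint of the arc is a center $c_{k_2}$, which is part of the statement. The homotopy-class membership of the deformed competitor, which you assign to admissibility, is in fact handled by a separate gluing argument along the minimizing sequence.
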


\begin{figure}
	\centering
	\includegraphics[scale=0.7]{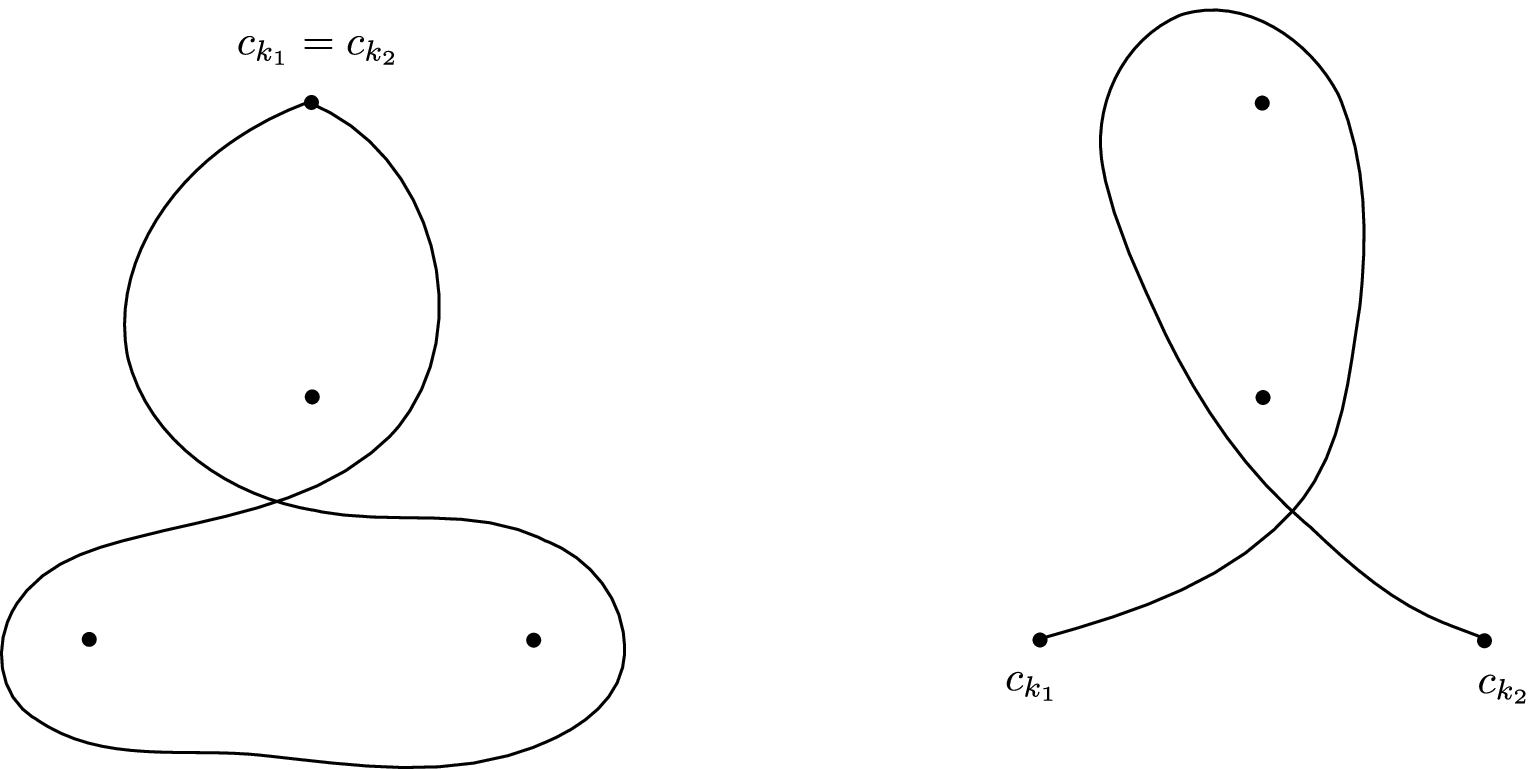}
	\caption{}
\end{figure}

\begin{rem}
	In the above theorem, if a solution satisfies the conditions of the second case, we say it is a \textbf{collision-reflection} solution between $c_{k_1}$ and $c_{k_2}$. See Figure $2$, for some illuminating pictures of such solutions.  
	
\end{rem}

An immediate corollary of Theorem \ref{thm 2} is that 

\begin{cor}
	For any $ N \ge 3, T>0$ and $\al =1$, if an admissible free homotopy class $\tau \in \pi_1(\cx)$ satisfies $\mf{h}(\tau)_j \ne 0$ for at least three different $j$'s from $\{1, \dots, N \}$, then there is a $q \in \gts$ with $A_{T/2}(q) = c_{T}(\tau)$ and it is a $T$-periodic solution of equation \eqref{ncenter}.
\end{cor}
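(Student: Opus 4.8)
The statement follows directly from Theorem \ref{thm 2}, and the plan is simply to exclude its second alternative under the stated index hypothesis. First I would apply Theorem \ref{thm 2} to the admissible class $\tau$: since $\al = 1$, it produces a $q \in \gtt$ with $A_{T/2}(q) = c_{T}(\tau)$ satisfying exactly one of the two listed alternatives. If alternative $(1)$ holds we are done, since then $q \in \gts$ is collision free and solves \eqref{ncenter}. So the only thing that must be checked is that alternative $(2)$ cannot occur.

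Suppose, for contradiction, that $(2)$ holds. Then there are centers $c_{k_1}, c_{k_2} \in \mc{C}$ for which the final conclusion of Theorem \ref{thm 2} forces $\mf{h}(\tau)_j = 0$ for every $j \in \{1, \dots, N\} \setminus \{k_1, k_2\}$. Consequently $\mf{h}(\tau)_j$ can be nonzero only for $j$ lying in the set $\{k_1, k_2\}$, which has at most two elements (exactly one when $k_1 = k_2$, i.e.\ when $c_{k_1} = c_{k_2}$). Hence at most two of the $N$ components of $\mf{h}(\tau)$ are nonzero, which contradicts the hypothesis that $\mf{h}(\tau)_j \ne 0$ for at least three distinct indices $j$.

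Therefore alternative $(2)$ is impossible, alternative $(1)$ must hold, and $q \in \gts$ is the desired collision free $T$-periodic solution of \eqref{ncenter} realizing $c_{T}(\tau)$. I do not expect any genuine obstacle here: all of the analytic content is already packaged inside Theorem \ref{thm 2}, and the corollary merely combines its dichotomy with the elementary counting observation above. The one point worth noting is that the argument relies on $\mf{h}(\tau)$ being a well-defined homotopy invariant, so that ``at least three nonzero components'' is an unambiguous condition on $\tau$; this is exactly the independence of $\text{ind}(x, c_j)$ from the choice of $x$ that was recorded just before Theorem \ref{thm 2}, so nothing further is needed.
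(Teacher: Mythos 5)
Your proof is correct and is precisely the argument the paper intends (the corollary is stated there as an immediate consequence of Theorem \ref{thm 2} without a written proof): the second alternative forces $\mf{h}(\tau)_j = 0$ outside $\{k_1, k_2\}$, leaving at most two nonzero components, which contradicts the hypothesis of at least three. Nothing further is needed.
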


Our work is particularly inspired by the paper of Soave and Terracini \cite{ST12}, where they also studied periodic solutions of $N$-center problem with topological constraints using variational method. The main difference is they studied the fixed energy case, while we are working on the fixed time case. 

Comparing with the results in \cite{ST12}, the variational method seems to work better in the fixed time case. In \cite{ST12} the results were proved under the assumption that the energy level is close enough to zero, while in our case, the results hold for any $T>0$. More importantly, when one tries to generalize the results to the \emph{rotating $N$-center} problem, where instead of fixing the positive masses at certain locations, one assumes they are rotating with respect to the origin at a uniform angular velocity. For the fixed energy case, this can only been done for angular velocity close enough to zero, as shown in \cite{So14}; while for the fixed time case, in \cite{Y15d} we are able to generalize these results for an arbitrary angular velocity. 

Recall that in the restricted $N+1$-body problem, a special case is the $N$ positive masses  form a relative periodic solution of the $N$-body problem, which makes it become a rotating $N$-center problem. However the uniform angular velocity is defined by the central configuration of the relative periodic solution and it is generally not close to zero, unless all the masses are close enough to infinity. As a result while the fixed energy case can not be applied to the restricted $N+1$-body problem, our results of the fixed time case does apply, see \cite{Y15d}. Furthermore combining some perturbation argument, we believe it can also be used to prove the existence of (relative) periodic solutions of $N+1$-body problem in certain braid classes, for a definition of the braid classes see \cite{Mo98}, under the assumption that mass of one of the bodies is small enough. This will discussed in a forthcoming paper.  

As we mentioned the main difficulty is to show that the minimizers are free of collision. There are essential two  approaches. One is the so called \emph{level estimate} and the other is \emph{local deformation}, for details see \cite{Y15c} and the references within. Here we will use the second approach. 

The second approach usually contains two steps: First by the \emph{blow-up} technique introduced by Terracini, see \cite{Ve02}, \cite{FT04}, an \emph{isolated collision solution} (see Definition \ref{coll solution}) will be translated to a \emph{parabolic collision-ejection solution} (see Definition \ref{coej}) of the Kepler-type problem in our case (a homothetic-parabolic solution in the $N$-body problem); Second we will show the parabolic collision-ejection solution is not a minimizer in certain admissible class of curves. 

Different ideas can be used in the second step depending on the nature of the problem. Here we will consider an obstacle minimizing problem, which has been studied by Terracini and her coauthors in various papers, \cite{TV07}, \cite{ST12} and \cite{BTV13}.






We state the result obtained in the second step as a separate theorem in the following, as we believe it will be useful in different variational approaches when topological constraints are involved, for example see \cite{Y15b}, \cite{Y15c}. 

Consider the planar Kepler-type problem or one-center problem, where the motion of the test particle satisfies the following equation
\begin{equation}
 \label{Kepler} \ddot{x}(t)= \nb \bar{V}(x(t))= -\frac{m_1 x(t)}{|x(t)|^{\al+2}},
\end{equation}

where $\bar{V}(x(t)):= \frac{m_1}{\al|x(t)|^{\al}}$ is the negative potential at $x(t)$. The corresponding Lagrange and action functional are defined as following 
$$ \lb (x, \dot{x}) := \frac{1}{2} |\dot{x}|^2 + \vb(x), $$
$$ \ab ([T_1, T_2]; x) := \int_{T_1}^{T_2} \lb (x(t), \dot{x}(t)) \,dt.$$
 
As before for any $T> 0$, we set
$$ \ab_T(x) := \ab([-T, T]; x). $$

\begin{dfi} \label{coej}
Given an arbitrary pair of angles $\phi_-, \phi_+$, we define a parabolic \emph{collision-ejection} solution as
\begin{equation} \label{xbar}
 \xb(t) =
\begin{cases}
  (\mu |t|)^{\frac{2}{\al+2}} e^{i \phi_+}  & \text{ if } t \in [0, +\infty), \\
  (\mu |t|)^{\frac{2}{\al+2}} e^{i \phi_-} & \text{ if } t \in (-\infty, 0],
\end{cases}
\end{equation}
where
\begin{equation} \label{mu}
\mu = (\al +2) (\frac{m_1}{2 \al})^{\frac{1}{2}}.
\end{equation}
\end{dfi}

A simple calculation shows that a parabolic collision-ejection solution is a solution of the Kepler-type problem with zero energy, except at $t=0$.

For any $T>0$, define the following class of curves:
$$ \Gm^*_T(\xb):= \{ x \in \h([-T, T], \cc \setminus \{0 \}): x \text{ satisfies the following conditions } \}. $$
\begin{enumerate}
 \item $x(\pm T) = \xb( \pm T);$
 \item $ \text{Arg}(x(\pm T)) = \phi_{\pm}.$
\end{enumerate}

Let $\Gm_T(\xb)$ be the weak closure of $\Gm^*_T(\xb)$ in $\h([-T, T], \cc)$ and
$$ \cb_T(\xb):= \inf \{ \ab_T(x): x \in \Gm_T(\xb) \}. $$

\begin{thm}
 \label{thm 3} For any $T>0$ and $|\phi_+ - \phi_-| \le 2 \pi$,  there is a $\gm \in \Gm_T(\xb)$, with $\ab_T(\gm) = \cb_T(\xb)$.
 \begin{enumerate}
  \item If $\al \in (1,2),$ then $ \gm \in \Gm^*_T(\xb)$ and $\ab_T(\gm) < \ab_T(\xb)$;
  \item If $\al =1$ and $|\phi_+ - \phi_-| < 2\pi$, then $ \gm \in \Gm^*_T(\xb)$ and $\ab_T(\gm) < \ab_T(\xb)$. 
 \end{enumerate}
 Furthermore $\gm$ is a solution of the Kepler-type problem, if $\phi_+ \ne \phi_-.$

\end{thm}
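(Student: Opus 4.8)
The plan is to combine the direct method with an obstacle-type deformation, in four stages: (i) produce a minimizer in the weak closure $\Gm_T(\xb)$; (ii) show that any minimizer carrying a collision must coincide with $\xb$; (iii) beat $\xb$ with an explicit collision-free competitor, which is the heart of the matter; and (iv) deduce that the minimizer solves \eqref{Kepler}.

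For stage (i) I would argue as in the standard direct method. Since $\vb \ge 0$, the functional $\ab_T$ is bounded below on $\Gm_T(\xb)$, and it is finite at $\xb$: because $\al < 2$ the exponent $\frac{2\al}{\al+2} < 1$, so the potential $\vb(\xb(t)) = \frac{m_1}{\al}(\mu|t|)^{-\frac{2\al}{\al+2}}$ is integrable across the collision and $\ab_T(\xb) < +\infty$; hence $\cb_T(\xb) \in [0, \ab_T(\xb)]$. Given a minimizing sequence, the fixed endpoints $x(\pm T) = \xb(\pm T)$ together with the bound on $\int_{-T}^T |\dot x|^2$ give an $H^1$ bound via the Poincar\'e inequality, so a subsequence converges weakly in $\h([-T,T],\cc)$ and uniformly; the limit $\gm$ lies in $\Gm_T(\xb)$, the boundary data pass to the limit by uniform convergence, the kinetic part is weakly lower semicontinuous by convexity, and the potential part is lower semicontinuous by Fatou's lemma (using $\vb \ge 0$ and pointwise convergence). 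Therefore $\ab_T(\gm) \le \cb_T(\xb)$ and $\gm$ is a minimizer.

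Stage (ii) is where the blow-up/asymptotic analysis enters. Suppose $\gm \in \Gm_T(\xb) \setminus \Gm^*_T(\xb)$, i.e. $\gm(t_0) = 0$ for some $t_0$. Off the collision set, $\gm$ is an interior minimizer among collision-free paths and hence solves \eqref{Kepler}; minimality excludes an accumulation of collisions, and the standard asymptotic estimates near an \emph{isolated} collision force the energy to vanish at $t_0$ and $\gm$ to approach the origin along a definite direction with the self-similar rate $|t - t_0|^{\frac{2}{\al+2}}$, so $\gm$ is parabolic on each side of $t_0$. A zero-energy parabolic Kepler arc joining a prescribed point to the origin is unique, so $\gm$ is determined on $[-T,t_0]$ and $[t_0,T]$; since $|\xb(-T)| = |\xb(T)|$ the two parabolic travel times agree only if $t_0 = 0$, and then $\gm \equiv \xb$. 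Thus the only possible minimizer carrying a collision is $\xb$ itself, with value $\ab_T(\xb)$.

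The main obstacle is stage (iii): constructing $y \in \Gm^*_T(\xb)$ with $\ab_T(y) < \ab_T(\xb)$, which by stage (ii) forces every minimizer to be collision-free. I would build $y$ by excising a short window $[-s,s]$ around the collision, where $|\xb(\pm s)| = \rho := (\mu s)^{\frac{2}{\al+2}}$, and replacing the near-collision piece of $\xb$ by a collision-free detour of the same duration $2s$ joining $\xb(-s)$ to $\xb(s)$ near the circle $\{|x| = \rho\}$ and sweeping the angular gap $|\phi_+ - \phi_-| \le 2\pi$. Using the zero-energy identity $\tfrac12|\dot{\xb}|^2 = \vb(\xb)$, the excised parabolic action equals $2\int_{-s}^{s}\vb(\xb)\,dt$, which I would expand to leading order in $\rho$; the detour contributes a kinetic cost controlled by its length $\lesssim \rho\,|\phi_+-\phi_-|$ over time $2s$, plus a potential cost $\approx \frac{m_1}{\al \rho^{\al}}\cdot 2s$. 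Balancing these scale-invariant quantities, the sign of the leading difference is governed by $\al$ and the angle $|\phi_+ - \phi_-|$: for $\al \in (1,2)$ it is strictly negative throughout $|\phi_+-\phi_-|\le 2\pi$, while for $\al = 1$ the borderline is the Keplerian break-even $|\phi_+ - \phi_-| = 2\pi$, so strict negativity holds precisely when $|\phi_+-\phi_-| < 2\pi$. Making this rigorous — the uniform leading-order expansion and the realization of the detour within the fixed time and endpoints — is the delicate part; the obstacle formulation $\{|x|\ge\rho\}$ of \cite{TV07,ST12,BTV13} supplies the clean framework, with contact analysis on $\{|x|=\rho\}$ replacing the ad hoc detour. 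Combining (i)--(iii), the minimizer satisfies $\ab_T(\gm) = \cb_T(\xb) \le \ab_T(y) < \ab_T(\xb)$, so by (ii) it cannot carry a collision; hence $\gm \in \Gm^*_T(\xb)$ with $\ab_T(\gm) < \ab_T(\xb)$, which is cases (1) and (2). For the final assertion, being collision-free, $\gm$ is an interior minimizer of the smooth functional $\ab_T$ over $\h$-paths in $\cc\setminus\{0\}$ with the prescribed endpoints, hence a classical solution of \eqref{Kepler}; the hypothesis $\phi_+\neq\phi_-$ is used to exclude the degenerate radial/reflection configuration and thereby ensure $\gm$ is a genuine, non-trivial Kepler orbit.
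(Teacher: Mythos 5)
Your overall architecture (existence by the direct method, uniqueness of the collision minimizer, beating $\xb$ with a collision-free competitor, then regularity) matches the paper's, but the heart of the proof --- your stage (iii) --- is missing, and the heuristic you offer in its place points the wrong way. The excised parabolic action and both costs of your circular detour all scale like $\ro^{(2-\al)/2}$, so there is no ``leading order in $\ro$'' to expand: the comparison is between fixed constants. Carrying it out with $m_1=2\al\mu^2/(2+\al)^2$, the excised action over $[-s,s]$ equals $\frac{8\mu}{4-\al^2}\ro^{(2-\al)/2}$, while a circular arc of radius $\ro$ sweeping angle $\Delta\phi$ in time $2s$ costs $\mu\bigl[\frac{(\Delta\phi)^2}{4}+\frac{4}{(2+\al)^2}\bigr]\ro^{(2-\al)/2}$; the detour wins only when $(\Delta\phi)^2<\frac{16(2+3\al)}{(2-\al)(2+\al)^2}$, which at $\al=1$ already fails for $\Delta\phi$ slightly below $\pi$, let alone $2\pi$. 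So ``balancing these scale-invariant quantities'' does not produce the thresholds in the statement, and finding the detour that does is precisely the hard part. The actual mechanism in the paper (following \cite{ST12}) is different in kind: one studies minimizers of the zero-energy Maupertuis functional $\Mb$ under the obstacle constraint $\{|x|\ge\ro\}$, uses the constant-energy identity \eqref{mini2} and the angular momentum pinned down by $C^1$ contact with the circle (Lemmas \ref{restricted mini}, \ref{ro2}, \ref{am4}) to integrate $d\tht/dr$ in closed form, and finds that any minimizer touching the obstacle must sweep a total angle tending to $\frac{2\pi}{2-\al}$ as $\ro\to0$ (equations \eqref{ac1}--\eqref{ac5}). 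Since $\frac{2\pi}{2-\al}>2\pi$ for $\al\in(1,2)$ and $=2\pi$ for $\al=1$, this contradicts $|\phi_+-\phi_-|\le2\pi$ (resp.\ $<2\pi$); that is Lemma \ref{ro3}, and a monotonicity argument in $\ro$ (Lemmas \ref{ro1}, \ref{ro4}--\ref{ro6}) then excludes the collision. Citing the obstacle formulation is not a substitute for this computation.

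Two further points. First, the paper does not run the obstacle argument on the fixed-time functional $\ab_T$ at all: it proves collision-freeness for the Maupertuis minimizer (Theorem \ref{collisionless}), converts this into a free-time statement (Theorem \ref{thm 4}), and only then recovers Theorem \ref{thm 3} by gluing the free-time competitor to reparametrized tails of $\xb$ on $[-T_1,T_1]$ and rescaling back to $[-T,T]$ (the estimate $f(T_1)\to0$). Your plan to work directly at fixed time would force you to redo the contact analysis without the fixed-energy identity, which is exactly what makes the angular integral computable. Second, your stage (ii) justification is flawed: an isolated collision of a fixed-time minimizer carries some conserved energy $h$, and nothing in the local asymptotics forces $h=0$; the arcs are parabolic only to leading order near $t_0$, not globally, so the ``uniqueness of the zero-energy parabolic arc'' argument does not apply as stated. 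The conclusion (a minimizer with a collision must equal $\xb$) is what the paper asserts too, but it rests on Gordon's comparison argument \cite{Go77} (cf.\ Lemma \ref{ro5}), not on ODE uniqueness.
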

The proof of the above result will be given at the last section. They main idea of the proof comes from \cite{ST12}. For the case of $\al =1$, a different proof of the above result can also found in \cite{FGN11}.  
\begin{rem}
By a classic result of Gordon in \cite{Go77}, we know that when $\al =1$ and $|\phi_+- \phi_-|= 2\pi $ the above result does not hold, so this is the best result we can get in the Newtonian case.
\end{rem}



\section {Asymptotic analysis} \label{sec asym}
The asymptotic behavior of a solution as it approaching a collision, which will be established in the section, is the base of any local deformation result. Most of the results in this section are not new, similar results in various settings have been obtained before, see \cite{FT04} and the references there. We include them here for the seek of completeness and also because the direct references of these results in our particular setting seems hard to locate.

\begin{dfi} \label{coll solution}
 Given a $\dl>0$ and time $t_0$, we say $y: [t_0-\dl, t_0+\dl] \to \cc$ is an \textbf{isolated collision solution}, if it satisfies the following conditions
 \begin{enumerate}
 	\item $y(t_0) \in \mc{C};$
 	\item $y(t) \in \cx$ satisfies equation \eqref{ncenter}, if $t \ne t_0;$
 	\item The energy constant is preserved through the collision, i.e., there is a $h$, such that,  if $t \ne t_0,$
 	$$ \ey |\dot{y}(t)|^2 -V(y(t)) = h$$
 \end{enumerate}
 Furthermore we say $y$ is a \textbf{simple isolated collision solution}, if it does not have any transverse self-intersection.
 
\end{dfi}

Let $y(t)$ be an isolated collision defined as above, to simplify notation, we will only state and prove our results with $t_0 =0$ and $y(t_0) =c_1$, while the general cases are exactly the same. Furthermore for the rest of this paper, we assume $c_1$ is always located at the origin: $c_1 =0$.  

The main results of this section are Proposition \ref{angular limit} and \ref{blowup limit}. In this section we do not require the isolated collision solution to a be minimizer. 

Define the \textbf{moment of inertia} and \textbf{angular momentum} of $y(t)$ with respect to the center $c_1=0$ correspondingly as
$$I(t):= I(y(t)):= |y(t)|^2;$$
$$ J(t):= J(y(t)):= y(t) \times \dot{y}(t).$$
Set $y(t) =   \rho(t) e^{i\tht(t)}= u(t) + i v(t),$ with $\ro(t), \tht(t), u(t), v(t) \in \rr$, then
$$ J(t)= \rho^2(t)\dot{\tht}(t) = u(t)\dot{v}(t)- v(t)\dot{u}(t) .$$

Since $y|_{[-\dl, \dl]}$ only collides with $c_1$, the negative potential with respect to the other $N-1$ centers is a smooth function:
$$ V^*(y) := V(y) - \frac{m_1}{\al |y|^{\al}} = \sum_{j =2}^{N} \frac{m_j}{\al |y - c_j|^{\al}}.$$

Now we will introduce two technical lemmas about $I(t)$ and $J(t)$ that will be useful throughout the entire section.

The first lemma the well known \textbf{Lagrange-Jacobi identity}.
\begin{lm} \label{LaJa}
There are continuous functions $B_1(t), B_2(t) \in C^0( [-\dl, \dl], \cc)$, such that the following identities hold for all $ t \in [-\dl, \dl]\setminus \{ 0 \}$
\begin{align*}
 \ddot{I}(t) & = (4 - 2\al)\frac{|\dot{y}(t)|^2}{2} + B_1(t) = (4-2 \al)[h+V(y(t))] + B_1(t) \\
   & = (4-2\al) \frac{m_1}{\al|y(t)|^{\al}} + B_2(t)
\end{align*}
\end{lm}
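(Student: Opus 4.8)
The plan is to differentiate the moment of inertia $I(t) = |y(t)|^2$ twice and then substitute the equation of motion \eqref{ncenter}, isolating the contribution of the colliding center $c_1 = 0$ from that of the remaining centers. Writing $\langle z, w \rangle = \mathrm{Re}(z \bar{w})$ for the Euclidean inner product on $\cc \cong \rr^2$, a direct computation gives $\dot{I} = 2\langle y, \dot{y}\rangle$ and hence
$$ \ddot{I}(t) = 2|\dot{y}(t)|^2 + 2\langle y(t), \ddot{y}(t)\rangle. $$
Since $y$ solves \eqref{ncenter} for $t \ne 0$, I would replace $\ddot{y}$ by $\nb V(y)$, so that the whole identity reduces to understanding the single scalar $\langle y, \nb V(y)\rangle$.

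The key structural observation is that the Keplerian part of the potential is homogeneous. Splitting $V = \frac{m_1}{\al|y|^{\al}} + V^*$ as in the text, the first summand is homogeneous of degree $-\al$, so Euler's identity yields
$$ \left\langle y, \nb\!\left(\frac{m_1}{\al|y|^{\al}}\right) \right\rangle = -\al \cdot \frac{m_1}{\al|y|^{\al}} = -\frac{m_1}{|y|^{\al}}, $$
while the remaining centers contribute the term $\langle y, \nb V^*(y)\rangle$. Combining this with the previous display produces the raw form
$$ \ddot{I}(t) = 2|\dot{y}(t)|^2 - 2\frac{m_1}{|y(t)|^{\al}} + 2\langle y(t), \nb V^*(y(t))\rangle, $$
from which all three expressions in the statement will be obtained by bookkeeping.

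To pass to the stated forms I would use the energy relation $\ey|\dot{y}|^2 - V(y) = h$, i.e. $|\dot{y}|^2 = 2(h + V(y))$, to trade kinetic energy against potential energy and the constant $h$. The equality of the first two expressions is then immediate, since $\frac{|\dot{y}|^2}{2} = h + V(y)$ by definition of $h$. For the remaining equalities one substitutes $|\dot{y}|^2 = 2(h + V)$ together with $V = \frac{m_1}{\al|y|^{\al}} + V^*$ into the raw form and collects every term not carrying the singular factor $\frac{m_1}{|y|^{\al}}$; the singular contributions cancel with the prescribed coefficients, leaving
$$ B_1(t) = 2\al h + 2\al V^*(y(t)) + 2\langle y(t), \nb V^*(y(t))\rangle, $$
$$ B_2(t) = 4h + 4 V^*(y(t)) + 2\langle y(t), \nb V^*(y(t))\rangle, $$
and a short check confirms the coefficients $4 - 2\al$ of both $\frac{|\dot{y}|^2}{2}$ and $\frac{m_1}{\al|y|^{\al}}$ come out correctly.

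The only point that is not purely algebraic, and which I regard as the crux, is the continuity of $B_1$ and $B_2$ on the closed interval $[-\dl, \dl]$, including the collision instant $t = 0$, where $|\dot{y}|$ and the Keplerian potential blow up. This holds because $V^*$ and $\nb V^*$ involve only the centers $c_2, \dots, c_N$, all distinct from $c_1 = 0$, so $V^*$ is smooth on a neighborhood of the origin through which $y$ passes; since $y$ is continuous with $y(0) = 0$, the compositions $V^*(y(t))$ and $\langle y(t), \nb V^*(y(t))\rangle$ extend continuously across $t = 0$ (the latter in fact tending to $0$). Hence $B_1, B_2 \in C^0([-\dl, \dl], \cc)$, which is exactly what the statement requires, and the cancellation of all the unbounded terms into these continuous remainders is what makes the identity meaningful through the collision.
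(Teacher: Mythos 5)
Your proof is correct and follows essentially the same route as the paper: differentiate $I$ twice, substitute the equation of motion, split off the homogeneous Keplerian part of the potential via Euler's identity, and use the energy relation to absorb the smooth remainder coming from $V^*$ into $B_1$ and $B_2$. (Incidentally, your sign $+2\al\bigl(h+V^*(y)\bigr)$ in $B_1$ is the correct one; the paper's displayed $-2\al\bigl(h+V^*(y)\bigr)$ is a harmless sign slip that does not affect the continuity conclusion.)
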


\begin{proof}
By a straight forward computation 
$$ \ddot{I}= 2 | \dot{y}|^2 - \frac{2m_1}{|y|^{\al}} + 2 \langle \nabla V^*(y), y \rangle. $$
Using the energy identity
$$ \ey |\dot{y}|^2 - V(y) = \ey |\dot{y}|^2 - \frac{m_1}{\al |y|^{\al}}- V^*(y) = h, $$
we get 
\begin{align*}
\ddot{I} &= (4 -2 \al)\frac{|\dot{y}|^2}{2}- 2 \al (h + V^*(y))+ 2 \langle \nabla V^*(y), y \rangle \\
&:= (4 - 2 \al)\frac{|\dot{y}|^2}{2}+ B_1.
\end{align*}
It is not hard to see $B_1(t) \in C^0([-\dl, \dl], \cc). $ The rest of the lemma can be shown similarly using the energy identity. 

\end{proof}
\begin{rem} \label{dotB}
From the above proof, we can see $\dot{B}_1(t)$ is well-defined for any $t \ne 0$ and there is a positive constant $C$ such that 
$$ |\dot{B}_1(t)| \le C |\dot{y}(t)|, \quad \forall t \in [-\dl, \dl]\setminus \{0 \}.$$
\end{rem}

The second lemma is about the angular momentum $J(t)$.

\begin{lm} \label{J0}
 $$\lim_{t \to 0} J^2(t) = 0.$$
\end{lm}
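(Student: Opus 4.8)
The plan is to squeeze $J^2(t)$ between $0$ and the quantity $I(t)|\dot y(t)|^2$ by means of a Sundman-type identity, and then to show that $I(t)|\dot y(t)|^2 \to 0$ as $t \to 0$ using the preserved energy. Since $y(0)=c_1=0$ and $y$ is continuous, we have $\ro(t) = |y(t)| \to 0$ as $t \to 0$, and this is the only asymptotic input we will need.

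The key identity comes from writing $y = \ro e^{i\tht}$, so that $|\dot y|^2 = \dot\ro^2 + \ro^2\dot\tht^2$ and $J = \ro^2\dot\tht$. Since $I = \ro^2$ gives $\dot I = 2\ro\dot\ro$, a direct computation yields
$$ I(t)\,|\dot y(t)|^2 = \ro^2\dot\ro^2 + \ro^4\dot\tht^2 = \tfrac{1}{4}\dot I(t)^2 + J(t)^2, \qquad t \ne 0. $$
Because $\dot I(t)^2 \ge 0$, this immediately gives the one-sided bound $0 \le J(t)^2 \le I(t)\,|\dot y(t)|^2$ for all $t \ne 0$. It therefore suffices to prove that the right-hand side tends to $0$.

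For that I would invoke the energy identity $\ey|\dot y|^2 - V(y) = h$, which yields $|\dot y|^2 = 2h + \frac{2m_1}{\al \ro^{\al}} + 2V^*(y)$, and hence
$$ I\,|\dot y|^2 = \ro^2 |\dot y|^2 = 2h\,\ro^2 + \tfrac{2m_1}{\al}\,\ro^{\,2-\al} + 2\ro^2 V^*(y). $$
Here the hypothesis $\al < 2$ is used in an essential way: it makes the exponent $2-\al$ strictly positive, so that $\ro^{\,2-\al} \to 0$; meanwhile $V^*$ is smooth, hence bounded, near the collision $c_1 = 0$, so the first and third terms also vanish as $\ro \to 0$. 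Thus $I\,|\dot y|^2 \to 0$, and the squeeze $0 \le J^2 \le I\,|\dot y|^2$ forces $\lim_{t\to 0} J^2(t) = 0$.

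I expect no serious obstacle here: the whole argument reduces to recognizing the correct Sundman-type decomposition of $I\,|\dot y|^2$ and exploiting $\dot I^2 \ge 0$. The only point requiring care is the role of $\al < 2$, which is precisely what guarantees $\ro^{\,2-\al}$ decays rather than blows up; for $\al \ge 2$ the argument (and indeed the conclusion) would fail, consistent with the strong-force regime behaving differently.
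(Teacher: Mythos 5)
Your proof is correct and follows essentially the same route as the paper: bound $J^2$ by $I|\dot{y}|^2$, rewrite that product via the energy identity, and conclude from $2-\al>0$ together with $|y(t)|\to 0$. The only cosmetic difference is that you obtain the bound $J^2 \le I|\dot{y}|^2$ from the exact Sundman/Lagrange decomposition $I|\dot{y}|^2 = \tfrac{1}{4}\dot{I}^2 + J^2$ (which the paper itself records later as identity \eqref{I1}), whereas the paper's proof of this lemma quotes the Cauchy--Schwarz inequality $(u\dot{v}-v\dot{u})^2 \le (u^2+v^2)(\dot{u}^2+\dot{v}^2)$ --- these are the same inequality.
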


\begin{proof}
 For any $t \in [-\dl, \dl] \setminus \{0 \},$
 \begin{align*}
  J^2 & = (u \dot{v} - v\dot{u})^2 \le (u^2 + v^2 ) ( \dot{u}^2 + \dot{v}^2) \\
      & = 2|y|^2  \frac{|\dot{y}|^2}{2} = 2|y|^2 [ h+ V(y)] \\
      & = 2 \big[ \frac{m_1}{\al} |y|^{2 - \al} + (h+ V_1(y)) |y|^2 \big] \\
      & := C|y|^{2 - \al} + B(t)|y|^2,
 \end{align*}
where $C$ is a positive constant and $B(t) \in C^0([-\dl, \dl], \cc)$. The desired result follows from the facts that $2-\al>0$ and $\lim_{t \to 0} |y(t)| = 0$
\end{proof}

With the above two lemmas, we can get the following asymptotic estimates of $I(t)$ when $y(t)$ is approaching to a collision.
\begin{prop}
 \label{asmp}
 If $y|_{[-\dl, \dl]}$ is an isolated collision solution with $y(0)=0$ and $\mu$ defined as in \eqref{mu}, then 
 
 \begin{enumerate}
  \item for $t>0$,  $$ I(t) \sim (\mu t)^{\frac{4}{2 + \al}}; \quad \dot{I}(t) \sim \frac{4}{2 + \al} \mu (\mu t)^{\frac{2 - \al}{2 + \al}}; \quad  \ddot{I}(t) \sim 4 \frac{2 - \al}{(2 + \al)^2} \mu^2 ( \mu t)^{\frac{-2\al}{2 + \al}}.$$
 \item for $t<0$,
  $$ I(t) \sim (-\mu t)^{\frac{4}{2 + \al}}; \quad \dot{I}(t) \sim - \frac{4}{2 + \al} \mu (-\mu t)^{\frac{2 - \al}{2 + \al}}; \quad  \ddot{I}(t) \sim 4 \frac{2 - \al}{(2 + \al)^2} \mu^2 ( -\mu t)^{\frac{-2\al}{2 + \al}}.$$
  
 \end{enumerate}
 Furthermore
$$ \frac{1}{2} | \dot{y}(t)|^2 \sim V(y(t)) \sim \frac{1}{4 - 2 \al} \ddot{I}(t) \sim \frac{2}{(2 + \al)^2} \mu^2 ( \mu |t|)^{\frac{-2\al}{2 + \al}}.$$

\end{prop}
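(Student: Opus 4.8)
The plan is to reduce the entire statement to a single sharp asymptotic for $I(t)$ and to extract the latter from a conserved-energy argument attached to the Lagrange--Jacobi equation. Throughout I work with $t>0$; the case $t<0$ is identical up to signs. First I would record the reductions. The energy identity gives $\ey|\dot y|^2 = h + V(y) = \frac{m_1}{\al}|y|^{-\al} + h + V^*(y)$, and since $h + V^*(y)$ stays bounded while $|y|\to 0$, one has $|\dot y|^2 = \frac{2m_1}{\al}I^{-\al/2}(1+o(1))$. Moreover every ``furthermore'' relation is automatic once the asymptotic for $\ddot I$ is known: Lemma \ref{LaJa} gives $\frac{1}{4-2\al}\ddot I = \ey|\dot y|^2 + O(1) = h+V(y)+O(1)$, and because $\ddot I \to \minf$ all these quantities are asymptotic to $\frac{1}{4-2\al}\ddot I$, while $V(y)\sim \frac{m_1}{\al}I^{-\al/2}$ as $V^*$ is bounded. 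In turn, the third identity of Lemma \ref{LaJa} reads $\ddot I = (4-2\al)\frac{m_1}{\al}I^{-\al/2} + B_2(t)$, so the $\ddot I$-asymptotic follows the moment we know $I \sim (\mu t)^{\frac{4}{2+\al}}$, the constant $(4-2\al)\frac{m_1}{\al}$ being exactly $4\frac{2-\al}{(2+\al)^2}\mu^2$ by the definition \eqref{mu} of $\mu$. Thus everything rests on the two asymptotics for $I$ and $\dot I$.

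Next I would establish convexity and the vanishing of $\dot I$. Since $4-2\al>0$ and $|y|\to 0$, Lemma \ref{LaJa} gives $\ddot I(t)\to \minf$, so $\ddot I>0$ on some $(0,\dl']$ and $\dot I$ is strictly increasing there. Writing $y=\ro e^{i\tht}$ with $I=\ro^2$, we have $\dot I^2 = 4\ro^2\dot\ro^2 = 4I\dot\ro^2 \le 4I|\dot y|^2 = \frac{8m_1}{\al}I^{\frac{2-\al}{2}}(1+o(1))$, which tends to $0$ because $\frac{2-\al}{2}>0$. Hence $\dot I\to 0$ as $t\to 0^+$, and being increasing, $\dot I>0$ on $(0,\dl']$.

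The heart of the proof is then a conserved-energy argument for the $I$-equation. I would set $E(t):=\ey\dot I^2 - \frac{4m_1}{\al}I^{\frac{2-\al}{2}}$. Differentiating and inserting the third identity of Lemma \ref{LaJa} (with $|y|^{-\al}=I^{-\al/2}$), the two singular terms cancel because $\frac{4m_1}{\al}\cdot\frac{2-\al}{2}=(4-2\al)\frac{m_1}{\al}$, leaving simply $\dot E = \dot I\,B_2(t)$. Since $\dot I\to 0$ and $I\to 0$ we get $E(0^+)=0$, whence $E(t)=\int_0^t \dot I B_2\,ds$ and, using $\dot I>0$, $|E(t)|\le \|B_2\|_\infty I(t)$. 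As $I(t)=o(I^{\frac{2-\al}{2}})$ (indeed $I^{\al/2}\to 0$), this gives $\ey\dot I^2 = \frac{4m_1}{\al}I^{\frac{2-\al}{2}}(1+o(1))$, i.e. $\dot I = \sqrt{\tfrac{8m_1}{\al}}\,I^{\frac{2-\al}{4}}(1+o(1))$. Consequently $\frac{d}{dt}I^{\frac{2+\al}{4}} = \frac{2+\al}{4}I^{-\frac{2-\al}{4}}\dot I \to \frac{2+\al}{4}\sqrt{\tfrac{8m_1}{\al}}=\mu$, and integrating from $0$ yields $I^{\frac{2+\al}{4}}\sim \mu t$, that is $I\sim(\mu t)^{\frac{4}{2+\al}}$; the asymptotic for $\dot I$ then follows by feeding this back into the relation for $\dot I$. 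For $t<0$ the same computation applies with $\dot I<0$, producing $I\sim(-\mu t)^{\frac{4}{2+\al}}$ and the correspondingly signed asymptotic for $\dot I$.

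The step I expect to be the genuine obstacle is pinning down the constant in $\dot I^2$. A naive substitution $\dot I^2 = 4I|\dot y|^2 - 4J^2$ leaves an angular-momentum term $J^2$, which Lemma \ref{J0} only controls as $O(I^{\frac{2-\al}{2}})$, precisely the order of the leading term, so it cannot simply be dropped. The energy function $E$ is engineered to circumvent exactly this: its only source is the bounded $B_2$, whose cumulative contribution is of the strictly smaller order $O(I)$. This is what simultaneously fixes the leading constant and, implicitly, upgrades the angular-momentum estimate to $J^2=o(I^{\frac{2-\al}{2}})$.
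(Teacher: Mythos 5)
Your proof is correct, but it takes a genuinely different route from the paper. The paper does not argue directly: it reduces Proposition \ref{asmp} to the abstract Sundman-type estimate (6.2) of Ferrario--Terracini \cite{FT04}, and the entire content of its proof is the verification of that lemma's five hypotheses for $I(t)$ --- monotonicity, the differential inequality $\dot{I}^2 + a^2 \le b I \ddot{I} + cI$ with $b = \frac{4}{2-\al}$, the lower bound $\ddot{I}\, I^{\frac{b-2}{b}} \ge d$, and a third-derivative bound $|\frac{d^3I}{dt^3}| \le e\,\ddot{I}^{\gm}$. You instead produce a self-contained first-integral argument: the function $E = \ey \dot{I}^2 - \frac{4m_1}{\al} I^{\frac{2-\al}{2}}$ is engineered so that the Lagrange--Jacobi identity kills the singular terms in $\dot{E}$, leaving $\dot{E} = \dot{I} B_2$ with $B_2$ bounded, whence $E = O(I) = o(I^{\frac{2-\al}{2}})$ and the leading constant in $\dot{I}^2$ is pinned down; integrating $\frac{d}{dt} I^{\frac{2+\al}{4}} \to \mu$ then yields the asymptotics, and your constants all check out ($\frac{2+\al}{4}\sqrt{8m_1/\al} = \mu$, $\frac{4}{2+\al}\mu = \sqrt{8m_1/\al}$, $(4-2\al)\frac{m_1}{\al} = 4\frac{2-\al}{(2+\al)^2}\mu^2$). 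Your closing observation is also accurate and is the real point of divergence: the naive substitution $\dot{I}^2 = 4I|\dot{y}|^2 - 4J^2$ stalls because Lemma \ref{J0} controls $J^2$ only at the critical order $O(I^{\frac{2-\al}{2}})$, and your energy argument is precisely what upgrades this to $o(I^{\frac{2-\al}{2}})$. What each approach buys: yours is elementary, avoids any external black box, and proves exactly what the proposition states (asymptotics up to $\ddot{I}$); the paper's is shorter on the page and, via the verified hypotheses, also carries the higher-derivative control $|\dddot{I}| \le e\,\ddot{I}^{\gm}$ that the abstract lemma requires. One small point of care your write-up handles correctly but should keep explicit: $\dot{I} > 0$ on $(0,\dl']$ is needed both to take the positive square root and to bound $\int_0^t \dot{I}|B_2| \le \|B_2\|_\infty I(t)$, and it follows from $\ddot{I} \to +\infty$ (so $\dot{I}$ is increasing near $0$) together with $\dot{I}(0^+) = 0$, which you derive from $\dot{I}^2 \le 4I|\dot{y}|^2 = O(I^{\frac{2-\al}{2}})$.
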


This type of asymptotic estimates has been proved by many authors in different circumstances. We refer the readers to \cite{FT04} and the references there. We will only show the proof for $t \in (0, \dl]$, while the other is similar. By quoting $\bf(6.2)$, page $323$ at \cite{FT04}, the above proposition follows immediately from the next lemma.

\begin{lm}
For $t>0,$ the moment of inertia $I(t)$ satisfies the following results:
\begin{enumerate}
\item $\lim_{t \to 0^+} I(t) =0;$
\item $\forall t \in (0, \dl]$, $I(t) \ge 0$ and $\dot{I}(t) \ge 0;$
\item There are constants $a$, $b = \frac{4}{2-\al} > 2$ and $c$ such that $ \dot{I}^2 +a^2 \le b I \ddot{I} + c I;$
\item There is a constant $d>0,$ such that for any $ t \in (0, \dl],$ $\ddot{I}(t) I^{\frac{b-2}{b}}(t) \ge d;$
\item There is a constant $e>0$, such that for any $  t \in (0, \dl] ,$ $|\frac{d^3 I(t)}{d t^3}| < e \ddot{I}^{\gm}(t),$ with $\gm = \frac{2b-3}{b-2} = \frac{3}{2} + \frac{1}{\al}.$
\end{enumerate}

\end{lm}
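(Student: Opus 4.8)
The plan is to derive all five items from the Lagrange--Jacobi identity (Lemma \ref{LaJa}), the angular momentum estimate (Lemma \ref{J0}), and Remark \ref{dotB}. Throughout I would first shrink $\dl$ if necessary: since every assertion concerns the behaviour of $y$ near the collision, we may assume $|y(t)|$ is as small as we wish for all $t \in [-\dl,\dl]\setminus\{0\}$. Writing $B_1, B_2 \in C^0([-\dl,\dl],\cc)$ as in Lemma \ref{LaJa}, set $M := \max_{[-\dl,\dl]}(|B_1| + |B_2|)$.

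Item (1) is immediate from continuity of $y$ and $y(0)=c_1=0$, so $I(t)=|y(t)|^2 \to 0$. For item (2), $I\ge 0$ is trivial. To get $\dot I \ge 0$ I would use the third form of Lemma \ref{LaJa}, $\ddot I = (4-2\al)\frac{m_1}{\al|y|^{\al}} + B_2$: since $4-2\al>0$ and $|y|^{-\al}\to+\infty$ while $B_2$ is bounded, shrinking $\dl$ makes $\ddot I > 0$ on $(0,\dl]$, so $\dot I$ is strictly increasing there. If $\dot I(t_1)<0$ for some $t_1$, then $\dot I<0$ on $(0,t_1)$, so $I$ is strictly decreasing there; letting $s\to 0^+$ in $I(s)>I(t)>0$ (for $0<s<t<t_1$) forces $0\ge I(t)>0$, a contradiction. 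Hence $\dot I \ge 0$.

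Item (3) comes from the two-dimensional Lagrange identity $|y|^2|\dot y|^2 = \langle y,\dot y\rangle^2 + J^2$, which gives $\dot I^2 = 4\langle y,\dot y\rangle^2 = 4I|\dot y|^2 - 4J^2$. Substituting $|\dot y|^2 = (\ddot I - B_1)/(2-\al)$ from Lemma \ref{LaJa} yields the exact identity $\dot I^2 + 4J^2 + bIB_1 = bI\ddot I$ with $b=\frac{4}{2-\al}$. Dropping the nonnegative term $4J^2$ and bounding $-bIB_1 \le bMI$ gives $\dot I^2 \le bI\ddot I + cI$ with $c:=bM$; thus item (3) holds with $a=0$. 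For item (4), a direct computation shows $\frac{b-2}{b}=\frac{\al}{2}$, so $I^{(b-2)/b}=|y|^{\al}$; multiplying the third form of $\ddot I$ by $|y|^{\al}$ gives $\ddot I\, I^{\al/2} = (4-2\al)\frac{m_1}{\al} + B_2|y|^{\al} \to (4-2\al)\frac{m_1}{\al}>0$, so after shrinking $\dl$ the left side stays above $d:=\ey(4-2\al)\frac{m_1}{\al}$.

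The genuinely technical item, and the main obstacle, is (5). Differentiating $\ddot I = (2-\al)|\dot y|^2 + B_1$ gives $\dddot I = (4-2\al)\langle \dot y,\ddot y\rangle + \dot B_1$. I would estimate each factor by its leading singular order: from the energy identity $|\dot y|\le C|y|^{-\al/2}$, from equation \eqref{ncenter} $|\ddot y| = |\nb V(y)| \le C|y|^{-(\al+1)}$, and from Remark \ref{dotB} $|\dot B_1|\le C|\dot y| \le C|y|^{-\al/2}$ (all for $|y|$ small). The first product dominates, giving $|\dddot I| \le C|y|^{-(\frac{3\al}{2}+1)}$. On the other hand item (4), equivalently $\ddot I \ge d|y|^{-\al}$, gives $\ddot I^{\gm} \ge d^{\gm}|y|^{-\al\gm}$, and since $\al\gm = \frac{3\al}{2}+1$ by the stated value of $\gm$, the two powers of $|y|$ match exactly; hence $|\dddot I| \le e\,\ddot I^{\gm}$ for suitable $e$. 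The delicate point throughout is the bookkeeping of the singular exponents and the verification that $\al\gm$ coincides with the exponent in the bound for $|\dddot I|$, which is precisely what forces $\gm=\frac{3}{2}+\frac{1}{\al}$.
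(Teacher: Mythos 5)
Your proposal is correct and takes essentially the same route as the paper: the same Lagrange--Jacobi substitutions for (2)--(4) (with $a=0$ in (3), exactly as in the paper's own proof), and the same differentiation $\dddot I=(4-2\al)\langle\nabla \bar V\text{-type terms},\dot y\rangle+\dot B_1$ for (5). The only cosmetic difference is that in (5) you track the singular orders through powers of $|y|$ and invoke item (4) once at the end, whereas the paper converts each factor into powers of $\ddot I$ separately; the exponent arithmetic $\frac{\al+1}{\al}+\frac12=\frac32+\frac1\al$ is identical.
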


\begin{proof}
 ($1$). Obvious.

 ($2$). The first inequality is obvious while the second follows from Lagrange-Jacobi identity.

 ($3$). If $y = u+iv$ with $u, v \in \rr$, then simple calculation shows that
 \begin{equation}
  \label{I1} (\frac{\dot{I}}{2})^2 + J^2 = (u^2 + v^2) (\dot{u}^2 + \dot{v}^2) = I |\dot{y}|^2. \end{equation}

 By the Lagrange-Jacobi identity,
 $$ \ddot{I}(t) = (2 -\al) |\dot{y}(t)|^2 + B_1(t). $$
 Therefore
 \begin{equation} \label{I7} |\dot{y}(t)|^2 = \frac{\ddot{I}(t) -B_1(t)}{2 -\al} \le \frac{\ddot{I}(t)}{2 - \al} +C_1. \end{equation}

Plug \eqref{I7} into \eqref{I1}, we get
$$ \dot{I}^2 +4 J^2 \le \frac{4}{2 -\al} I \ddot{I} + 4C_1 I. $$
Then for $b = \frac{4}{2 -\al}$ and $c = 4C_1$, we get
$$\dot{I}^2 \le  \dot{I}^2 +4 J^2 \le b I \ddot{I} + c I. $$

($4$). Again by the Lagrange-Jacobi identity, for any $t \in [-\dl, \dl] \setminus \{ 0 \},$

\begin{equation}
 \label{I2}\ddot{I} \ge (4 -2 \al)\frac{m_1}{ \al|y|^{\al}} - C_2.
\end{equation}

Then for $\dl$ small enough,
$$ \ddot{I} I^{\frac{b-2}{b}} = \ddot{I} |y|^{\al} \ge \frac{(4-2\al)m_1}{\al} - C_2 |y|^{\al} \ge d > 0.$$

($5$). For $t \ne 0$, after taking the derivatives of both sides of
$$ \ddot{I}(t) = (4-2\al)[h +V(y(t))]+ B_1(t),$$
we get
\begin{equation} \label{I3}
\frac{d^3 I}{d t^3}(t) = (4 -2\al) \frac{d V(y(t))}{dt} + \dot{B_1}(t) =  (4 -2\al) \langle \nabla V(y(t)), \dot{y}(t) \rangle + \dot{B_1}(t). \end{equation}
Notice that
\begin{equation}
 \label{I4} | \nabla V(y)| \le \frac{m_1}{|y|^{\al+1}} + \sum_{j =2}^{N} \frac{m_j}{|y - c_j|^{\al +1}} \le m_1 |y|^{-(\al+1)} +C_3,
\end{equation}
and by \eqref{I7}, \eqref{I2},
\begin{equation}
 \label{I6} |\dot{y}| \le C_{4} (\ddot{I})^{\frac{1}{2}} +C_{5}
\end{equation}
\begin{equation}
\label{I5} |y|^{-(\al+1)} \le  C_6 \ddot{I}^{\frac{\al +1}{\al}} + C_7.
\end{equation}
Recall that by Remark \ref{dotB}, 
$$ |\dot{B}_1(t)| \le C |\dot{y}(t)|. $$

Combining the above estimates and  the fact that $\ddot{I}(t) \to +\infty$, when $t \to 0$, we get
$$ |\frac{d^3 I}{d t^3}| \le C_{10} \ddot{I}^{\frac{\al+1}{\al}} \ddot{I}^{\frac{1}{2}} = C_{10} \ddot{I}^{\frac{3}{2} + \frac{1}{\al}} := e \ddot{I}^{\gm}. $$

\end{proof}

Put $y(t)$ in polar coordinates: $y(t) = \rho(t) e^{i \tht(t)}.$ We want to know how $\tht(t)$ behaves as $y(t)$ goes to a collision.

\begin{prop}
 \label{angular limit} There are finite $\tht_-, \tht_+$, such that
 $$ \lim_{t \to 0^-} \tht(t) = \tht_- , \quad \lim_{t \to 0^+} \tht(t) = \tht_+, $$
 and 
 $$ \lim_{t \to 0^{\pm}} \dot{\tht}(t) =0. $$
\end{prop}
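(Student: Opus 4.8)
The plan is to work entirely with the angular momentum $J(t) = \rho^2(t)\dot{\tht}(t)$, so that $\dot{\tht} = J/I$ with $I(t) = \rho^2(t)$. Since $I(t) \to 0$ as $t \to 0$ while Lemma \ref{J0} only gives $J(t) \to 0$, the quotient $J/I$ is an indeterminate form, and the bound $J^2 \le C\rho^{2-\al} + O(\rho^2)$ extracted from the proof of Lemma \ref{J0} is far too crude: it would only yield $|\dot{\tht}| \le C\rho^{-(2+\al)/2}$, which blows up. The whole point is therefore to produce a genuinely sharp decay rate for $J$ itself and only then divide by $I$.

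To obtain such a rate I would differentiate $J$ directly. Since $\dot{J} = \dot{y}\times\dot{y} + y\times\ddot{y} = y\times\ddot{y}$ and $\ddot{y} = \nabla V(y)$ by \eqref{ncenter}, the contribution of the colliding center $c_1 = 0$ to $\nabla V(y)$ is the radial vector $-m_1 y/|y|^{\al+2}$, which is parallel to $y$ and hence drops out of the cross product. Thus $\dot{J} = y\times\nabla V^*(y)$, and because $V^*$ is smooth near the collision, $\nabla V^*$ is bounded there, giving $|\dot{J}(t)| \le C\rho(t)$. By Proposition \ref{asmp} we have $\rho(t) = \sqrt{I(t)} \sim (\mu|t|)^{\power}$, so $|\dot{J}(t)| \le C'|t|^{\power}$. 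Finally, since $J$ is $C^1$ on $(0,\dl]$ with $\lim_{t\to 0^+}J(t)=0$ by Lemma \ref{J0}, I integrate from $0$, writing $J(t) = \int_0^t \dot{J}(s)\,ds$, which yields $|J(t)| \le C''\,|t|^{\frac{4+\al}{2+\al}}$.

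Combining this with $I(t) \sim (\mu|t|)^{\frac{4}{2+\al}}$ from Proposition \ref{asmp} gives
$$ |\dot{\tht}(t)| = \frac{|J(t)|}{I(t)} \le C'''\,|t|^{\powerr} \longrightarrow 0 \qquad (t\to 0^{\pm}), $$
which is the second assertion, the case $t<0$ being identical by symmetry of the argument. Moreover $|t|^{\powerr}$ is integrable near $0$ because $\powerr > 0$, so $\int_0^{\dl}|\dot{\tht}(s)|\,ds < \infty$; writing $\tht(t) = \tht(\dl) - \int_t^{\dl}\dot{\tht}(s)\,ds$ then shows that the one-sided limits $\tht_\pm = \lim_{t\to 0^{\pm}}\tht(t)$ exist and are finite. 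The main obstacle is exactly the step just described: recognizing that the naive estimate on $J$ coming from Lemma \ref{J0} is useless, and that the decisive gain comes from the cancellation of the radial central force in the torque $y\times\ddot{y}$, which confines $\dot{J}$ to the bounded smooth part $\nabla V^*$ and thereby forces $J$ to vanish faster than $I$.
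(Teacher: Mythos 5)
Your proposal is correct and follows essentially the same route as the paper: both exploit the cancellation of the radial central force in the torque $y\times\ddot{y}$ to get $|\dot{J}|\le C|y|\sim C|t|^{\power}$, integrate from $0$ using Lemma \ref{J0} to obtain $|J(t)|\le C|t|^{\frac{4+\al}{2+\al}}$, and divide by $I(t)\sim(\mu|t|)^{\frac{4}{2+\al}}$ to conclude $|\dot{\tht}(t)|\le C|t|^{\powerr}\to 0$, with integrability of $\dot{\tht}$ yielding the finite limits $\tht_{\pm}$. Your write-up is, if anything, slightly more explicit than the paper's in justifying the existence of $\tht_{\pm}$ via the integral representation of $\tht$.
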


\begin{proof}
By the definition of angular momentum
$$ \dot{J}= \frac{d}{dt}(y \times \dot{y}) = y \times \ddot{y}. $$
On the other hand for any $t \in (0, \dl),$ $y(t)$ is a solution of \eqref{ncenter}, so
$$ \ddot{y} =- \frac{m_1y}{|y|^{\al+2}} - \sum_{j =2}^{N} \frac{m_i(y-c_j)}{|y-c_j|^{\al +2}} . $$
Therefore
$$\dot{J} = y \times \ddot{y} =  \sum_{j=2}^{N} \frac{m_j y \times c_j}{|y - c_j|^{\al+2}}, $$
$$ |\dot{J}| = |y \times \ddot{y} |  \le  \sum_{j=2}^{N} \frac{m_j|c_j| \cdot |y|}{|y - c_j|^{\al+2}} \le C_1 |y| = C_1 I^{\frac{1}{2}}. $$
By the first result of Proposition \ref{asmp},
$$ |\dot{J}(t)| \le C_1 I^{\frac{1}{2}}(t) \le C_2 t^{\frac{2}{2 + \al}}. $$
Combining this with Lemma \ref{J0}, we get
$$ |J(t)| \le \int_{0}^{t} |\dot{J}(s)| \,ds \le C_2 t^{\frac{4 + \al}{2 + \al}}. $$
Hence $$ |J(t)| = \rho^2(t) |\dot{\tht}(t)| = I(t) |\dot{\tht}(t)| \le C_2 t^{\frac{4 + \al}{2 + \al}}. $$
Again by Proposition \ref{asmp}, we have $I(t) \sim C_3 t^{\frac{4}{2 + \al}},$ so
$$|\dot{\tht}(t)| \le C_4 t^{\frac{\al}{2 + \al}}. $$
Therefore
\begin{equation} \label{281}
 \lim_{ t \to 0^+} \dot{\tht}(t)= 0,
\end{equation}
by this it is easy to see there must be a finite $\tht_+$ with 
\begin{equation} \label{282}
 \lim_{t \to 0^+} \tht(t) = \tht_+,
\end{equation}
for some $\tht_+ \in [0, 2 \pi].$
The remaining results can be proven similarly.
\end{proof}

\section{The Blow-up Technique} \label{blow up}

Throughout this section we fix an arbitrary simple isolated collision solution $y(t), t \in [t_0, -\dl, t_0+\dl]$ as in Definition \ref{coll solution} and we will show that such a $y$ cannot be a minimizer of the fixed end problem under certain topological constraints.

The idea is to combining results of Theorem \ref{thm 3} and the \emph{blow-up} technique first introduced by S. Terracini in the studying of the $N$-body problem, see \cite{Ve02}, \cite{FT04}. We show that the same idea applies to the $N$-center problem as well.

Again to simplify notations, we assume $t_0=0$ and $y(0) =c_1 =0$ for the rest of the section. For any $\lmd >0$, we introduce a new $N$-center problem, which is a blow-up or rescaling of the original one with respect to $c_1$. To be precise, for each $\lmd$, we replace the positions of the $N$ centers at
$$ \cl_j = \lmd^{-\aly}( c_j-c_1)= \lmd^{-\aly}c_j, \quad \forall  j = 1 \dots, N,$$ 
here the blow-up is defined with respect to $c_1$ and $\cl_1$ is still at the origin as we assumed $c_1 =0$. A blow-up with respect to an arbitrary center $c_k$ can be defined similarly. 

Under the gravitational field of the new $N$ centers, the motion of a test particle satisfies the following equation:
\begin{equation}
 \label{rescaled} \ddot{x}(t) = \nabla \vlm(x(t)) = - \sum_{j = 1}^{N} \frac{m_j x(t)}{|x(t)- \cl_j|^{\al+2}},
\end{equation}
where $ \vlm(x(t)): = \sum_{j = 1}^N \frac{m_j}{\al |x(t) - \cl_j|^{\al}}. $

The corresponding Lagrangian and action functional are denoted by
$$ \llm (x, \dot{x}) = \ey |\dot{x}|^2 + \vlm(x),$$
$$ \alm([T_1, T_2];x) = \int_{T_1}^{T_2} \llm(x(t), \dot{x}(t)) \,dt,$$
$$ \alm_T(x) = \alm([-T, T];x). $$

\begin{dfi}
 \label{lmd rescale} Given a $\lmd>0$ and a curve $x: [a, b] \to \cc $, we define its \textbf{$\lmd$-rescale (with respect to $c_1$)} as:
 $$ \quad \xl(t) := \la [x(\lmd t)-c_1]+c_1 =\la x(\lmd t), \quad \forall t \in [\frac{a}{\lmd}, \frac{b}{\lmd}]. $$
\end{dfi}

\begin{lm}
 \label{lmd 1}
 $ A^{\lmd}([\frac{a}{\lmd}, \frac{b}{\lmd}],\xl) = \la A([a,b],x).$
\end{lm}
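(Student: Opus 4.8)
The plan is to prove the identity by a single pointwise computation of the rescaled Lagrangian along $\xl$, followed by one change of variables in the time integral. There is no genuine analytic difficulty here: the entire content is that the kinetic and the potential terms transform with the \emph{same} power of $\lmd$, and this matching of exponents is precisely the property that the blow-up exponent $\aly$ in Definition \ref{lmd rescale} is engineered to produce. The only step that demands any care is the bookkeeping of the two independent appearances of $\lmd$ at the very end.

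First I would differentiate $\xl(t) = \la x(\lmd t)$ to obtain $\dot\xl(t) = \lmd^{\,1 - \aly}\,\dot x(\lmd t) = \lmd^{\powerr}\,\dot x(\lmd t)$, so that the kinetic term obeys $\ey|\dot\xl(t)|^2 = \lmd^{\ale}\,\ey|\dot x(\lmd t)|^2$. Next, using $\cl_j = \la c_j$ (recall $c_1 = 0$), I would record the key algebraic identity $\xl(t) - \cl_j = \la\,(x(\lmd t) - c_j)$, whence $|\xl(t) - \cl_j|^{\al} = \lmd^{-\ale}\,|x(\lmd t) - c_j|^{\al}$ and therefore $\vlm(\xl(t)) = \lmd^{\ale}\,V(x(\lmd t))$. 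Because both terms carry the identical factor $\lmd^{\ale}$, the two computations combine into the pointwise homogeneity
$$ \llm\bigl(\xl(t), \dot\xl(t)\bigr) = \lmd^{\ale}\, L\bigl(x(\lmd t), \dot x(\lmd t)\bigr). $$

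Finally I would substitute $s = \lmd t$ in $\alm([\frac{a}{\lmd}, \frac{b}{\lmd}], \xl) = \int_{a/\lmd}^{b/\lmd} \llm(\xl(t), \dot\xl(t))\,dt$: the endpoints $\frac{a}{\lmd}, \frac{b}{\lmd}$ are carried to $a, b$, while $dt = \lmd^{-1}\,ds$ produces one further factor of $\lmd^{-1}$. Collecting the two sources of $\lmd$, namely the homogeneity degree $\ale$ of the integrand and the Jacobian exponent of the time substitution, gives the overall constant $\lmd^{\ale - 1} = \lmd^{-\als}$, so that $\alm([\frac{a}{\lmd}, \frac{b}{\lmd}], \xl) = \lmd^{-\als} A([a,b], x)$. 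As a consistency check one may test the exponent on the parabolic profile $\xb$, which is a fixed point of the rescaling ($\xl = \xb$); there the identity collapses to the scale invariance of the zero-energy Kepler action, confirming that the correct power is $\lmd^{\ale-1} = \lmd^{-\als}$.
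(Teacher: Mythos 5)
Your computation is correct line by line, and it is exactly the ``straight forward calculation'' that the paper's one-line proof alludes to: differentiate the rescaling, use $\xl(t)-\cl_j=\la\,(x(\lmd t)-c_j)$ to get the pointwise homogeneity $\llm(\xl,\dot\xl)=\lmd^{\ale}L(x,\dot x)(\lmd t)$, and change variables in time. But you should notice that the constant you end up with does not match the one in the statement you were asked to prove. You obtain $\lmd^{\ale-1}=\lmd^{-\als}=\lmd^{-\frac{2-\al}{2+\al}}$, whereas the lemma asserts the factor $\la=\lmd^{-\frac{2}{2+\al}}$; these exponents agree only when $\al=0$, and for the Newtonian case $\al=1$ they are $\lmd^{-1/3}$ versus $\lmd^{-2/3}$. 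Your own consistency check settles which is right: on the parabolic profile, $\tfrac12|\dot{\xb}|^2\sim C|t|^{-\frac{2\al}{2+\al}}$ gives $\ab([0,\dl];\xb)\sim C\dl^{\frac{2-\al}{2+\al}}$, so passing from $[0,\dl]$ to $[0,\dl/\lmd]$ multiplies the action by $\lmd^{-\frac{2-\al}{2+\al}}$, not $\lmd^{-\frac{2}{2+\al}}$. In other words, the exponent in the lemma's statement is a typo, and your proof establishes the corrected identity rather than the literal one.

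You should say this explicitly rather than silently writing $\lmd^{-\als}$ where the statement has $\la$; as written, your final display contradicts the claim you set out to prove without acknowledging it. The discrepancy is harmless for the rest of the paper: the only use of this lemma, in the reduction step of Proposition \ref{deform}, requires merely that $\alm$ and $A$ differ by a fixed \emph{positive} multiplicative constant, so that the strict inequality $\alm_{\dl/\lmd}(\xi_{\lmd})<\alm_{\dl/\lmd}(\yl)$ transfers to $A_{\dl}(\xi)<A_{\dl}(y)$. But a referee's report (or a proof) should record that the constant is $\lmd^{-\frac{2-\al}{2+\al}}$.
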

\begin{proof}
 By straight forward calculation.
\end{proof}

It is easy to see, $y_{\lmd}(t),$ $ t \in  [-\frac{\dl}{\lmd}, \frac{\dl}{\lmd}],$ the $\lmd$-rescale of $y$, is an isolated collision solution for the rescaled $N$-center problem.

When $\lmd$ goes to $0$, the rescaled $N$-center problem becomes more and more similar to the Kepler-type problem, as all the centers except $c_1$, were pushed further and further away and we will show that $\{\yl\}$ converges a collision-ejection solution of the Kepler-type problem.

Under the polar coordinates $y(t) = \rho(t) e^{i \tht(t)}$, recall that by Proposition \ref{angular limit}
\begin{equation}
 \label{eq ang limit} 
 \lim_{ t \to 0^{\pm}} \tht(t) = \tht_{\pm},
\end{equation}

We define the following parabolic collision-ejection solution of the Kepler-type problem.
\begin{equation} \label{ybar}
 \yb(t) =
\begin{cases}
  (\mu |t|)^{\frac{2}{\al+2}} e^{i \tht_+}  & \text{ if } t \in [0, +\infty), \\
  (\mu |t|)^{\frac{2}{\al+2}} e^{i \tht_-} & \text{ if } t \in (-\infty, 0],
\end{cases}
\end{equation}
where $\mu = (\al +2) (\frac{m_1}{2 \al})^{\frac{1}{2}}. $

\begin{prop}
 \label{blowup limit}
 For any $T> 0$, there is a sequence of positive numbers $\{ \lmd_n \searrow 0\}$, such that
 \begin{enumerate}
  \item $y_{\lmd_n}(t)$ converges uniformly to $\yb(t)$ on $[-T,T]$;
  \item $\dot{y}_{\lmd_n}(t)$ converges uniformly to $\dot{\yb}(t)$ on any compact subset of $[-T, T] \setminus \{0\}. $
 \end{enumerate}

\end{prop}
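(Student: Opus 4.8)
The plan is to view $\{\yl\}_{\lmd>0}$ as a family of solutions of the rescaled equation \eqref{rescaled} and to show that, as $\lmd\searrow 0$, this family degenerates onto the homothetic Kepler solution $\yb$. I would first record the scaling identities that follow immediately from Definition \ref{lmd rescale}: writing $y(t)=\rho(t)e^{i\tht(t)}$, one has $|\yl(t)|=\la\,\rho(\lmd t)$ and $\dot{\yl}(t)=\lmd^{\powerr}\dot{y}(\lmd t)$, and, since $\cl_j=\la c_j$, a direct computation gives $\ey|\dot{\yl}|^2-\vlm(\yl)=\lmd^{\ale}h=:\hl$, so that $\hl\to 0$. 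Because every center with $j\ge 2$ satisfies $|\cl_j|=\la|c_j|\to\infty$, on each fixed compact subset of $\cx$ the rescaled potential $\vlm$ approaches the single Kepler center $m_1/(\al|x|^{\al})$; this is the precise sense in which the rescaled problem converges to the Kepler--type problem.

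First I would establish compactness away from the collision instant. Fix $0<\dl<T$. By Proposition \ref{asmp}, as $\lmd\searrow 0$ one has $|\yl(t)|=\la\rho(\lmd t)\to(\mu t)^{\power}$ and $|\dot{\yl}(t)|^2=\lmd^{\ale}|\dot{y}(\lmd t)|^2\to\frac{2}{(2+\al)^2}\mu^2(\mu t)^{\frac{-2\al}{2+\al}}$, both uniformly on $[\dl,T]$ (the estimates of Proposition \ref{asmp} are asymptotic as the argument $\lmd t\to 0$, and on $[\dl,T]$ this argument ranges over an interval shrinking to $0$). In particular $\yl([\dl,T])$ remains in a fixed compact subset of $\cc$ bounded away from $0$ and, for $\lmd$ small, from every receding center $\cl_j$, $j\ge 2$; hence the right-hand side of \eqref{rescaled} is uniformly bounded there, $\{\yl\}$ is bounded in $C^2([\dl,T])$, and the same holds on $[-T,-\dl]$. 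Arzel\`a--Ascoli, together with a diagonal argument over $\dl\searrow 0$, then produces a sequence $\lmn\searrow 0$ with $\yln\to w$ in $C^1_{loc}([-T,T]\setminus\{0\})$ for some limit curve $w$.

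Next I would identify $w$. Letting $n\to\infty$ in \eqref{rescaled} on compact subsets of $[-T,T]\setminus\{0\}$, and using that the contributions of the centers $\cl_j$, $j\ge 2$, tend to $0$ uniformly there while $\hl\to 0$, shows that $w$ solves the Kepler equation \eqref{Kepler} with zero energy on each of $(0,T]$ and $[-T,0)$. The radial limit above forces $|w(t)|=(\mu|t|)^{\power}$, while Proposition \ref{angular limit} forces $\text{Arg}\,w(t)=\lim_{\lmd\to 0}\tht(\lmd t)=\tht_{\pm}$ for $\pm t>0$; comparing with \eqref{ybar} gives $w=\yb$. Since the limit is thereby uniquely determined, the whole sequence $\{\yln\}$ converges to $\yb$ in $C^1_{loc}([-T,T]\setminus\{0\})$, which is statement (2). (Alternatively, statement (2) can be read off directly from the polar identity $\dot{\yl}=\lmd^{\powerr}\big(\dot\rho(\lmd t)+i\rho(\lmd t)\dot\tht(\lmd t)\big)e^{i\tht(\lmd t)}$, since $\lmd^{\powerr}\rho(\lmd t)\dot\tht(\lmd t)\to 0$ by the bound $|\dot\tht(t)|\le C|t|^{\powerr}$ from the proof of Proposition \ref{angular limit}.)

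The remaining point, which I expect to be the main obstacle, is to upgrade the convergence of positions to hold uniformly on the \emph{entire} interval $[-T,T]$, across the collision instant $t=0$. Here $C^1$ control is impossible, since $|\dot{\yl}(t)|\to\infty$ as $t\to 0$ uniformly in $\lmd$, so Arzel\`a--Ascoli cannot be applied up to $0$. The remedy is the uniform H\"older estimate supplied by Proposition \ref{asmp}: there are constants $C>0$ and $\lmd_0>0$ with $|\yl(t)|=\la\,I^{\frac12}(\lmd t)\le C|t|^{\power}$ for all $|t|\le T$ and $0<\lmd\le\lmd_0$, and $|\yb(t)|=(\mu|t|)^{\power}$ obeys the same bound. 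Hence $|\yln(t)-\yb(t)|\le 2C|t|^{\power}$ near $t=0$, uniformly in $n$, which combined with the uniform convergence already obtained on $[\dl,T]\cup[-T,-\dl]$ for every $\dl>0$ yields uniform convergence on all of $[-T,T]$. This establishes statement (1) and completes the proof.
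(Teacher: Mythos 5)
Your proof is correct, and it reaches both conclusions by a route that differs in its technical machinery from the paper's, even though both rest on the same two ingredients (Propositions \ref{asmp} and \ref{angular limit}). The paper first computes the pointwise limits of $\yl$ and $\dot{\yl}$ explicitly in polar coordinates, then upgrades to uniform convergence of positions by bounding $\| \yl \|_{H^1}$ and extracting a weakly convergent (hence uniformly convergent) subsequence, and finally handles the derivatives by proving a uniform $L^2$ bound on $\ddot{y}_{\lmd}$ over $[-T,-\ep]\cup[\ep,T]$ together with a Cantor diagonalization. You instead get compactness away from $t=0$ from $C^2$ bounds read off the rescaled equation \eqref{rescaled} (using that $\yl$ stays away from the origin on $[\dl,T]$ and that the centers $\cl_j$, $j\ge 2$, recede), apply Arzel\`a--Ascoli, and identify the limit through the modulus and argument asymptotics; and you cross the collision instant with the uniform power bound $|\yl(t)|\le C|t|^{\power}$, which gives $|\yln(t)-\yb(t)|\le 2C|t|^{\power}$ near $0$ independently of $n$. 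Your treatment of the neighborhood of $t=0$ is arguably more direct and elementary than the paper's weak-compactness argument, and your ODE-based compactness makes the ``convergence to the Kepler problem'' mechanism explicit; the paper's Sobolev-norm route has the advantage of producing exactly the $H^1$ control that is reused later in the proof of Proposition \ref{deform} (lower semicontinuity of $\ab_T$ along $\yln$). Both arguments are sound, and the only point worth flagging is cosmetic: the uniform convergence of $|\dot{\yl}|$ on $[\dl,T]$ that you invoke for the $C^2$ bound controls only the modulus of the velocity, so the actual convergence of $\dot{\yln}$ must come, as you correctly arrange, from the Arzel\`a--Ascoli limit identification (or from your alternative polar-coordinate computation), not from that estimate alone.
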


\begin{proof}
We fix an arbitrary $T>0$ for the rest of the proof. First we will show the desired convergences are pointwise.

Recall that $\yl(t) = \la y(\lmd t)$ and $y(\lmd t) = \rho(\lmd t) e^{i \tht(\lmd t)}$, then for any $t>0$
\begin{align*}
 \yl(t) &= \la |y(\lmd t)| \frac{y(\lmd t)} {|y(\lmd t)|} = \la |y(\lmd t)| e ^{ i \tht(\lmd t)} \\
        &= \la \frac{|y(\lmd t)|}{(\mu \lmd t)^{\power}}(\mu \lmd t)^{\power} e^{i \tht(\lmd t)}\\
        &= \frac{|y(\lmd t)|}{(\mu \lmd t)^{\power}} (\mu t)^{\power} e^{i \tht(\lmd t)}.
\end{align*}

Since $ | y(\lmd t)| = I^{\frac{1}{2}}(\lmd t) \sim (\mu \lmd t)^{\power},$ and $\lim_{t \to 0^+} \tht(t) = \tht_+$, therefore for any $t \in (0, T]$
$$ \lim_{\lmd \to 0} \yl(t) = (\mu t)^{\power}e^{i \tht_+} = \yb(t). $$
Similarly for any $t \in [-T, 0)$
$$ \lim_{\lmd \to 0} \yl(t) = (\mu |t|)^{\power} e^{i \tht_-} = \yb(t). $$
At the same time $\yl(0) \equiv 0$ for any $\lmd$. Hence $\yl$ converges to $\yb$ point-wisely as $\lmd$ goes to zero.

Now let's try to estimate $\dot{y}_{\lmd}(t)$. For any $t > 0$,
\begin{align*}
 \dot{y}_{\lmd}(t) & = \lmd^{\powerr}\dot{y}(\lmd t) = \lmd^{\powerr} [ \dot{\rho}(\lt)e^{i \tht(\lt)} + i \rho(\lt) \dot{\tht}(\lt) e^{i \tht(\lt)}]\\
	    & = \lmd^{\powerr}[\frac{1}{2} \dot{I}(\lt)I^{-\frac{1}{2}}(\lt) + i I^{\frac{1}{2}}(\lt) \dot{\tht}(\lmd t)]e^{i \tht(\lt)} \\
	    & = \big[ \frac{1}{2} \frac{\dot{I}(\lt) I^{-\frac{1}{2}}(\lt)}{(\mu \lt)^{-\frac{\al}{2 + \al}}}(\mu t)^{-\powerr} + i \frac{I^{\frac{1}{2}}(\lt)}{(\mu \lt) ^{\power}}\lmd(\mu t)^{\power} \dot{\tht}(\lt) \big] e^{i \tht(\lt)}.
\end{align*}
On the other hand, by Proposition \ref{asmp}
$$ \dot{I}(\lt) I^{-\frac{1}{2}}(\lt) \sim \frac{4}{2 + \al} \mu (\mu \lt)^{-\frac{\al}{2 + \al}},$$
$$ I^{\frac{1}{2}}(\lt) \sim (\mu \lt)^{\power}. $$
Therefore for $t>0,$
\begin{align*}
 \lim_{\lmd \to 0} \dot{y}_{\lmd}(t) & = \lim_{\lmd \to 0} [\frac{2}{2 + \al}\mu (\mu t)^{-\powerr}e^{\tht(\lt)} + i \lmd(\mu t) ^{\power} \dot{\tht}(\lt)e^{i \tht(\lt)}]\\
      & = \frac{2 \mu}{2 + \al}(\mu t)^{- \powerr}e^{i \tht_+} + \lim_{\lmd \to 0} i \lmd (\mu t ) ^{\power} \dot{\tht}(\lt)e^{i \tht(\lt)} \\
      & = \frac{2 \mu}{2 + \al}(\mu t)^{- \powerr}e^{i \tht_+} = \dot{\yb}(t).
\end{align*}
Notice that \eqref{281} and \eqref{282} imply
$$ \lim_{\lmd \to 0} i \lmd (\mu t ) ^{\power} \dot{\tht}(\lt)e^{i \tht(\lt)} =0.$$
Similarly if $t < 0$, we have
 $$ \lim_{\lmd \to 0} \dot{y}_{\lmd}(t) =  \frac{2 \mu}{2 + \al}(\mu |t|)^{- \power}e^{i \tht_-} = \dot{\yb}(t).$$
Therefore
$\dot{y}_{\lmd}(t)$ converges point-wisely to $\dot{\yb}(t)$ for any $t \ne 0$.

By the definition of $y_{\lmd}$ and Proposition \ref{asmp}, there is a $\lmd^* >0$, such that for any $\lmd \in (0, \lmd^*)$
$$ |y_{\lmd}(t) | \le C_1 |t|^{\frac{2}{2+\al}}, \quad \forall t \in [-T, T],$$
$$ |\dot{y}_{\lmd}(t)| \le C_2 |t|^{-\frac{\al}{2+\al}}, \quad \forall t \in [-T, T] \setminus \{0\}.$$
As a result 
$$ \int_{-T}^T |y_{\lmd}(t)|^2 \, dt \le C_3 T^{\frac{6 +\al}{2+\al}}, $$
$$ \int_{-T}^T |\dot{y}_{\lmd}(t)|^2 \, dt \le C_4 T^{\frac{2-\al}{2+\al}}, $$
which means $\{ \| y_{\lmd}\|_{H^1}:  \lmd \in (0, \lmd^*) \}$ has a finite upper bound. Hence there is a sequence $\{ \lmd_n \searrow 0 \}$ such that $y_{\lmn}$ converges uniformly to $\bar{y}$ on $[-T,T]$ as $n$ goes to infinity.

To show that there is a sequence $\{ \dot{y}_{\lmn}\}$ converges uniformly to $\dot{\bar{y}}$ on any compact subset of $[-T, T] \setminus \{0 \}$, using the above reasoning and the Cantor diagonalization argument, it is enough to show that for any $\ep>0 $ and $\lmd^*>0$ small enough, there is a positive $C(\ep)>0$ such that 
\begin{equation}
\int_{-T}^{-\ep} + \int_{\ep}^T | \ddot{y}_{\lmd}(t) |^2 \,dt \le C(\ep), \quad \forall \lmd \in (0, \lmd^*). \label{ddoty}
\end{equation}

By a straight forward computation
$$ \ddot{y}_{\lmd}(t) = \lmd^{\frac{2 +2 \al}{2 +\al}}[\ddot{\rho}(\lmd t)- \rho(\lmd t)\dot{\tht}^2 (\lmd t) + 2 i \dot{\rho}(\lmd t )\dot{\tht}(\lmd t) +i \rho(\lmd t) \ddot{\tht}(\lmd t)] e^{i \tht(\lmd t)}.$$
Obviously $\rho(t) \dot{\tht}^2(t) \to 0$ as $t \to 0$ and by the proof of Proposition \ref{angular limit}
\[ |\dot{J}(\lmd t)| = \rho(\lmd t) | 2 \dot{\rho}(\lmd t)\dot{\tht}(\lmd t)+ \rho(\lmd t) \ddot{\tht}(\lmd t)| \le C_5 \rho(\lmd t). \]
Hence 
\[ |\ddot{y}_{\lmd}(t)| \le \lmd^{\frac{2 + 2 \al}{2 +\al}} \ddot{\rho}(\lmd t) + C_6. \]
By Proposition \ref{asmp}, for $\lmd, t$ small enough
\[ \ddot{\rho}(\lmd t) \le C_7 (\lmd t)^{-\frac{2 + 2 \al}{2 + \al}}, \]
and 
\[ |\ddot{y}_{\lmd}(t)| \le C_7 t^{-\frac{2 +2 \al}{2 +\al}} + C_6.\]
This obviously implies \eqref{ddoty}, for $ep>0$ and $\lmd^*>0$ small enough.

\end{proof}

Choose a $r>0$ small enough, such that $B(0, r) \cap \mc{C} = \{0 \},$ where $B(0,r) = \{z \in \cc: |z| \le r \},$ we further assume  $y([-\dl, \dl]) \subset B(0,r).$ 

Without loss of generality, for $\tht_{\pm}$ given in \eqref{eq ang limit}, we assume 
$$ \tht_- \in [0, 2\pi) \text{ and } |\tht_+ - \tht_-| \le 2 \pi. $$
As a result, there are two different choices of $\tht_+$:
$$\tht_+ \in [\tht_-, \tht_- + 2 \pi] \text{ or } \tht_+ \in [\tht_- - 2\pi, \tht_-].$$
Correspondingly we define the following two classes of admissible curves.

\begin{dfi} \label{gm dl}
 When $\tht_+ \in [\tht_-, \tht_-+ 2\pi]$, we define
 $$ \Gm^{*,+}_{\dl}(y)= \{ x \in H^1([-\dl, \dl], B(0,r) \setminus \{0 \}): x \text{ satisfies the following conditions} \}$$
 \begin{enumerate}
  \item $x(\pm \dl) = y(\pm \dl);$
  \item $ \text{Arg}(x(\pm \dl)) = \text{Arg}(y(\pm \dl)); $
 \end{enumerate}
 The weak closure of $\Gm^{*,+}_{\dl}(y)$ in $H^1([-\dl, \dl], \cc)$ will be denoted by $\gmp$.

 When $\tht_+ \in [\tht_- - 2 \pi, \tht_-]$, we define $\Gm^{*, -}_{\dl}(y)$ and $\gmm$ similarly.
\end{dfi}

\begin{prop}
 \label{deform} If $\tht_+ \in [\tht_-, \tht_- + 2 \pi],$ there is a $\xi \in \Gm_{\dl}^{*, +}(y)$, such that
 \begin{enumerate}
  \item $A_{\dl}(\xi) < A_{\dl}(y)$, when $\al \in (1,2).$
  \item $A_{\dl}(\xi) < A_{\dl}(y)$, when $\al=1$ and $|\tht_+ - \tht_-| < 2 \pi.$
 \end{enumerate}
If $\tht_+ \in [\tht_- - 2\pi, \tht_-]$, similar results as above hold for some $\eta \in \Gm_{\dl}^{*, -}(y)$.
\end{prop}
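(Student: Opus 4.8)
The plan is to transplant the strictly-better competitor furnished by Theorem \ref{thm 3} for the limiting Kepler-type problem back into the original $N$-center problem, using the blow-up convergence of Proposition \ref{blowup limit} together with the scaling identity of Lemma \ref{lmd 1}. I treat only the case $\tht_+ \in [\tht_-, \tht_- + 2\pi]$, the other being identical. First I would fix any $T>0$ and apply Theorem \ref{thm 3} with $\phi_\pm = \tht_\pm$, so that the associated parabolic collision-ejection solution is exactly $\yb$. Under hypothesis (1) ($\al \in (1,2)$) or hypothesis (2) ($\al = 1$ and $|\tht_+ - \tht_-| < 2\pi$), the theorem produces a collision-free $\gm \in \Gm^*_T(\yb)$ with $\ab_T(\gm) < \ab_T(\yb)$; I record the fixed positive gap $2\ep_0 := \ab_T(\yb) - \ab_T(\gm) > 0$.

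Next I would construct a competitor for $\yln$ in the rescaled $N$-center problem. By Proposition \ref{blowup limit}, along the sequence $\lmn \searrow 0$ we have $\yln \to \yb$ uniformly on $[-T,T]$ and $\dot{\yln} \to \dot{\yb}$ uniformly on compact subsets of $[-T,T] \setminus \{0\}$; in particular $\yln(\pm T) \to \yb(\pm T) = \gm(\pm T)$. Define $\xi_n$ on $[-\dl/\lmn, \dl/\lmn]$ to agree with $\yln$ outside $[-T,T]$ and, on $[-T,T]$, to equal $\gm$ after a small correction on a fixed short subinterval adjacent to each of $\pm T$ that reconciles $\gm(\pm T)$ with $\yln(\pm T)$. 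Since the endpoint discrepancy tends to $0$ and $\gm(\pm T)$ lies away from the origin, this correction stays in $\cc \setminus \{0\}$ and costs action $o(1)$ as $n \to \infty$. The curve $\xi_n$ then lies in $H^1$ and coincides with $\yln$ off $[-T,T]$, so comparing it with $\yln$ reduces to comparing the two over $[-T,T]$.

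The key estimate is that both rescaled actions converge to their Kepler counterparts. Since the remaining centers, located at $\lmn^{-\power} c_j$ for $j \ge 2$, recede to infinity, the potential $\vlm$ converges to $\vb$ uniformly on compact subsets of $\cc \setminus \{0\}$. Moreover, Propositions \ref{asmp} and \ref{blowup limit} give the $n$-uniform two-sided control $|\yln(t)| \sim (\mu |t|)^{\power}$ and $|\dot{\yln}(t)| \le C|t|^{-\powerr}$ on $[-T,T]$, which provide integrable majorants for $|\dot{\yln}|^2$ and for $\vlm(\yln)$ precisely because $\al < 2$ makes $|t|^{-2\al/(2+\al)}$ integrable. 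Dominated convergence then yields $\alm([-T,T], \yln) \to \ab_T(\yb)$ and $\alm([-T,T], \xi_n) \to \ab_T(\gm)$, so for all large $n$
$$ \alm([-T,T], \xi_n) < \alm([-T,T], \yln), $$
whence $\alm([-\dl/\lmn, \dl/\lmn], \xi_n) < \alm([-\dl/\lmn, \dl/\lmn], \yln)$. Undoing the rescaling, the curve $\xi(s) := \lmn^{\power} \xi_n(s/\lmn)$ on $[-\dl, \dl]$ satisfies $A_\dl(\xi) < A_\dl(y)$ by Lemma \ref{lmd 1}. Finally I would verify $\xi \in \Gm^{*,+}_{\dl}(y)$: for $n$ large $\xi$ equals $y$ off $[-\lmn T, \lmn T]$, so the endpoint and argument conditions hold automatically; off this interval $y$ vanishes only at $t=0$ and on it $\xi$ is the rescaled image of the collision-free $\gm$, so $\xi$ avoids $0$; and the modified arc has magnitude $O(\lmn^{\power})$, keeping $\xi$ inside $B(0,r)$.

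The step I expect to be the main obstacle is this action-convergence estimate: one must dominate the integrands of $\alm([-T,T], \yln)$ uniformly in $n$ near the collision time $t = 0$, where both the kinetic term and the singular potential term blow up. This is exactly where the asymptotics of Proposition \ref{asmp} and the restriction $\al < 2$ enter, and it is also where the Newtonian borderline $|\tht_+ - \tht_-| = 2\pi$ must be excluded, in agreement with the strict-inequality hypotheses of Theorem \ref{thm 3} and Gordon's classical obstruction.
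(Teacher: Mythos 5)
Your proposal is correct and follows essentially the same route as the paper: blow up at the collision, import the strictly better Kepler competitor $\gm$ from Theorem \ref{thm 3}, splice it into $\yln$ with small connecting arcs near $\pm T$, show the rescaled $N$-center action differs asymptotically from the Kepler action by a quantity smaller than the fixed gap, and undo the rescaling with Lemma \ref{lmd 1}. The only cosmetic difference is in the comparison of $\aln_T(\yln)$ with $\ab_T(\yb)$: the paper uses weak lower semicontinuity together with positivity of the far-center potentials to get a one-sided bound, whereas you invoke dominated convergence with the uniform majorants supplied by Propositions \ref{asmp} and \ref{blowup limit}; both are valid.
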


\begin{proof}
	
	We will only give the detail for the existence of $\xi$, while the other is exactly the same. 
	
	By Lemma \ref{lmd 1}, it is enough to show that for some $\lmd>0$ small enough, there is a $\xi_{\lmd} \in H^1([-\frac{\dl}{\lmd}, \frac{\dl}{\lmd}], B(0, \lmd^{-\frac{2}{2+\al}}r) \setminus \{0 \})$ satisfies 
	$$ \xi_{\lmd}(\pm \frac{\dl}{\lmd}) = y_{\lmd}(\pm \frac{\dl}{\lmd}); \quad \text{Arg}(\xi_{\lmd}(\pm \frac{\dl}{\lmd})) = \text{Arg} (y_{\lmd}(\pm \frac{\dl}{\lmd}))$$
	and 
	$$ A^{\lmd}_{\dl / \lmd}(\xi_{\lmd}) < A^{\lmd}_{\dl / \lmd}(y_{\lmd}). $$
	As $\xi(t) = \lmd^{\frac{2}{2+\al}} \xi_{\lmd}(\frac{t}{\lmd}),$ $t \in [-\dl, \dl]$ will satisfy all the desired requirements. 
	
	This will be demonstrated in two steps first we show this can done for the Kepler-type action functional $\ab$, then for the action functional $A^{\lmd}$ as well.

 Choose a $T>0$, by Proposition \ref{blowup limit} and the lower semicontinuity of $\ab$, we can find a sequence of $\{ \lmn \} $ converging to $0$, satisfying
 \begin{equation}
  \label{deform1} \lim_{ n \to +\infty} \ab_T(\yln) \ge \ab_T(\yb).
 \end{equation}

 Again by Proposition \ref{blowup limit}, $\yln(t)$ and $\dot{y}_{\lmn}(t)$ converges uniformly to $\yb(t)$ and $\dot{\yb}(t)$ correspondingly on the compact set  $[-T, -\ey T] \cup [\ey T, T]$. Hence there is a sequence of positive numbers $\{ \dl_n \}$ converging to $0$, such that for $n$ large enough
 $$ |\yln(t) - \yb(t)| \le \dl_n \text{ and } | \dot{y}_{\lmn}(t) - \dot{\yb}(t)| \le \dl_n, \forall t \in [-T, -\ey T] \cup [\ey T, T]. $$

 Now define a new sequence of curves $\{ \zln \}$ by
 $$ \zln(t) =
 \begin{cases}
  \frac{t-T}{\dl_n} [\yb(t) -\yln(t) ] + \yln(t), & \text{ if } t \in [T- \dl_n, T] \\
  \yb(t), & \text{ if } t \in [-T + \dl_n, T- \dl_n] \\
  \frac{t + T+ \dl_n}{\dl_n} [ \yln(t)- \yb(t) ] + \yb(t), & \text{ if } t \in [-T,-T + \dl_n].
   \end{cases}
 $$
 As a result, for $n$ large enough
  $$ \zln(\pm T) = \yln(\pm T) ; \quad   \text{Arg}(\zln(\pm T)) = \text{Arg}( \yln(\pm T)) .$$
 By a simple computation
   $$ |\dot{z}_{\lmn}(t)| \le C_1, \quad \forall t \in [-T, -T+ \dl_n] \cup [T- \dl_n, T]. $$
 Therefore
 $$ \int_{T- \dl_n}^T \ey | \dot{z}_{\lmn}|^2 + \int_{-T}^{-T+ \dl_n}\ey | \dot{z}_{\lmn}|^2 \le C_2 \dl_n. $$
 On the other hand it is easy to see,
 $$ \vb(\zln(t)) \le C_3, \quad \forall t \in [-T,-T+ \dl_n] \cup [T- \dl_n, T]$$
 Hence
 $$ \int_{-T}^{-T + \dl_n}  \vb(\zln) + \int_{T- \dl_n}^T \vb(\zln) \le 2 C_3 \dl_n.$$
 By the above argument,
 \begin{equation}
  \label{deform2}  \lim_{n \to +\infty} \ab_{T}(\zln) = \ab_T(\yb).
 \end{equation}

 At the same time, by Theorem \ref{thm 3}, there is a $\gm \in H^1([-\frac{T}{2}, \frac{T}{2}], \cc \setminus \{0 \} )$ with $\gm(\pm \frac{T}{2}) = \yb(\pm \frac{T}{2})$ and $\text{Arg}(\gm(\pm \frac{T}{2})) = \text{Arg}(\yb(\pm \frac{T}{2}))$, such that 
 $$\ab_{\frac{T}{2}}(\yb)-  \ab_{\frac{T}{2}}(\gm) = 3 \ep >0,$$
 for some $\ep >0.$
 
 As a result we can define a new sequence of curves $\xln \in H^1([-T, T], \cc \setminus \{0 \})$ by 
 $$ \xln(t) =
 \begin{cases}
  \zln(t), & \text{ if } t \in [\frac{T}{2}, T] \\
  \gm(t), & \text{ if } t \in [-\frac{T}{2}, \frac{T}{2}] \\
  \zln(t), & \text{ if } t \in [-T,-\frac{T}{2}].
   \end{cases}
 $$
 Obviously 
 $$ \xi_{\lmn}(\pm T) = z_{\lmn}(\pm T) = y_{\lmn}(\pm T);$$
 $$ \text{Arg}(\xi_{\lmn}(\pm T)) = \text{Arg}(z_{\lmn}(\pm T)) = \text{Arg}(y_{\lmn}(\pm T)),$$
and 
 $$ \ab_T(\zln) - \ab_T(\xln) = 3 \ep.$$

 Combining this with \eqref{deform1} and \eqref{deform2}, for $n$ large enough, we have
 \begin{equation}
  \label{deform3} \ab_T(\yln) - \ab_T(\xln) \ge 2 \ep.
 \end{equation}
 This finishes the first step. 
 
 Now we will compare the values of the action functionals $\ab$ and $\alm$ on $\yln$.
 \begin{align}
 \label{deform4} \begin{split}
A^{\lmn}_{T}(\yln) & = \int_{-T}^{T} [\ey |\dot{y}_{\lmn}|^2 + V_{\lmn}(\yln) ] \\
& = \int_{-T}^{T} [\ey |\dot{y}_{\lmn}|^2 + \frac{m_1}{\al|\yln|^{\al}} ] + \int_{-(T+\dn)}^{T+\dn} \sum_{j =2}^{N} \frac{m_j}{\al |\yln - c^{\lmn}_j|^{\al}}. \\
& \ge \ab_{T}(\yln).
 \end{split}
 \end{align}
 At the same time
 \begin{align*}
 \aln_{T}(\xln) & = \int_{-T}^{T} [ \ey |\dot{z}_{\lmn}|^2 + V_{\lmn}(\xln)] \\
		    & = \int_{-T}^{T} [ \ey |\dot{z}_{\lmn}|^2 + \frac{m_1}{\al |\xln|^{\al}}] + \int_{-T}^{T} \sum_{j =2}^{N} \frac{m_j}{\al |\xln - c^{\lmn}_j|^{\al}} \\
		    & := \ab_{T}(\xln) + F_n.
 \end{align*}

 By the definition of $\xln$, it is not hard to see, $|\xln(t)|, t \in [-T,T]$ have a uniform upper bound for $n$ large enough, and $|c^{\lmd_n}_j|, j>1,$ goes to infinite as $n$ goes to infinity. This means $ \lim_{ n \to + \infty} F_n = 0.$

 Therefore when $n$ is large enough
 \begin{equation}
  \label{deform5} A^{\lmn}_{T}(\xln) \le \ab_{T}(\xln) + \ep.
 \end{equation}

 By \eqref{deform3}, \eqref{deform4} and \eqref{deform5}, we have proved 
 $$ \aln_T(\yln) - \aln_T(\xln) \ge \ep, $$
 for $n$ large enough. 
 
 It is not hard to see for $n$ large enough, we have  $T < \frac{\dl}{\lmd_n}$ and $|\xi_{\lmn}(t)| \le \lmn^{-\frac{2}{2+\al}} r$, for any $t \in [-T,T]$. Since $\xln(\pm T) = \yln(\pm T),$ we can extend the domain of $\xln$ to $[-\frac{\dl}{\lmd_n}, \frac{\dl}{\lmd_n}]$ by simply attach $\yln|_{[-\frac{T}{\lmd_n}, -T]}$ and $\yln|_{[T, \frac{T}{\lmd_n}]}$ to the corresponding ends of $\xln|_{[-T,T]}$. As a result, for $n$ large enough
 
 $$ \aln_{-\dl/\lmd_n}(\yln) - \aln_{\dl / \lmd_n}(\xln) \ge  \aln_T(\yln) - \aln_T(\xln) \ge \ep >0,$$
 and $\xi_{\lmn}$ obviously satisfies all the requirements posted at the begin of the proof and we are done.

\end{proof}

\section{Proof of Theorem \ref{thm 1}} \label{sec thm 1}

This section will be devoted to the proof of Theorem \ref{thm 1}. However except the last proof, all the results in this section hold when $\al =1$ as well. 

First we will briefly recall some results from \cite{HS85}. 

\begin{dfi}
Given a generic loop $x \in \gts$, we say it has a
\begin{enumerate}
\item \textbf{singular $1$-gon }, if there is a subinterval $I \subsetneq S_T$ such that $x|_{I}$ is a sub-loop contained in the trivial free homotopy class;
\item \textbf{singular $2$-gon }, if there are two disjoint subintervals $I_1, I_2 \subsetneq S_T$, such that $x$ identifies the end points of $I_1$ with $I_2$ and the loop obtained by the identification is contained in the trivial free homotopy class.

\end{enumerate}
\end{dfi}

Then the following result was proved in Theorem $4.2$, \cite{HS85}.
\begin{thm} \label{Hass}
If a generic loop $x$ has excess self-intersection, then $x$ has a singular $1$-gon or singular $2$-gon.
\end{thm}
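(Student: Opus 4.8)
The statement is the self-intersection \emph{bigon criterion} of Hass and Scott, so the plan is to prove the logically equivalent contrapositive: if a generic loop $x \in \gts$ has \emph{neither} a singular $1$-gon \emph{nor} a singular $2$-gon, then $x$ already realizes the minimal number of transverse self-intersections in its free homotopy class, i.e.\ it has no excess self-intersection. A first instinct is purely combinatorial: a generic reducing homotopy from $x$ to a loop with fewer double points decomposes into elementary moves (birth/death of monogons and bigons, and triple-point moves), and since the double-point count drops, some move must kill a monogon or a bigon. This is tempting but insufficient, because the monogon or bigon so produced lives on an \emph{intermediate} loop at an intermediate time, with no direct way to transport it back to $x$ itself; overcoming exactly this gap is the content of the theorem, so I would instead argue in the universal cover.

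Let $p \colon \widetilde{\cx} \to \cx$ be the universal cover; since $\cx = \cc \setminus \mc{C}$ is an open surface, $\widetilde{\cx}$ is an open disk. Fix a lift $\tilde{x} \colon \rr \to \widetilde{\cx}$ with $\tilde{x}(s+T) = t \cdot \tilde{x}(s)$, where $t = [x] \in \pi_1(\cx)$; the full preimage $p^{-1}(x(S_T))$ is the union of the deck translates $g \cdot \tilde{x}$, $g \in \pi_1(\cx)$. The two hypotheses translate into two structural facts. First, \emph{no singular $1$-gon forces each lift to be embedded}: if $\tilde{x}(s_1) = \tilde{x}(s_2)$ with $s_1 < s_2$ and $s_2 - s_1 < T$, then $\tilde{x}|_{[s_1, s_2]}$ is a loop in the simply connected $\widetilde{\cx}$, so its projection $x|_{[s_1, s_2]}$ is a null-homotopic sub-loop, i.e.\ a singular $1$-gon, a contradiction. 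Second, \emph{no singular $2$-gon forces any two distinct translates to cross at most once}: if two embedded lines $g_1 \tilde{x}$, $g_2 \tilde{x}$ met at two points $P, Q$, an innermost choice yields sub-arcs $a_1, a_2$ joining $P$ to $Q$ that bound an embedded disk $D \subset \widetilde{\cx}$; projecting, $a_1, a_2$ come from subintervals $I_1, I_2 \subsetneq S_T$ whose endpoints $x$ identifies, and the disk $D$ certifies that the resulting identified loop is null-homotopic, a singular $2$-gon, again a contradiction.

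With these two facts the conclusion follows by a standard count. I would place a complete hyperbolic metric on $\cx$ (legitimate since $\chi(\cx) = 1 - N < 0$ for $N \ge 2$) and homotope each lift to a bi-infinite geodesic; homotopic loops have lifts with identical endpoints on the circle at infinity, so the cyclic order, hence the \emph{linking pattern}, of these endpoints is an invariant of the free homotopy class $\tau$. Two lifts cross at least once, by the Jordan curve theorem, precisely when their endpoints link, so for \emph{any} representative the self-intersection number is at least the number of linked translate-pairs per fundamental domain; for our $x$ the two structural facts above show each linked pair contributes exactly one crossing and no unlinked pair contributes any, giving equality. Hence $x$ attains the minimum and has no excess self-intersection. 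The main obstacle I anticipate is the careful bookkeeping in the $2$-gon extraction — guaranteeing that $I_1$ and $I_2$ are genuinely disjoint, and handling the ``same-strand'' double crossings where $g_2 g_1^{-1}$ relates the two arcs (these may yield a $1$-gon instead) — together with verifying that the linking count is truly homotopy-invariant and computes the minimal self-intersection number.
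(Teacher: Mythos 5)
The paper does not prove this statement; it quotes it directly as Theorem 4.2 of Hass--Scott [HS85], so there is no internal argument to compare against. Your strategy --- pass to the contrapositive, lift to the universal cover $\widetilde{\cx}$, show that the absence of singular $1$-gons makes each lift embedded and the absence of singular $2$-gons makes distinct lifts cross at most once, then certify minimality via the linking pattern of ideal endpoints of the associated geodesics --- is indeed the standard route to this result in the literature, and in outline it is sound.

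As written, however, there are three genuine gaps, only some of which you flag. First, your deduction that ``no singular $1$-gon forces each lift to be embedded'' covers only self-intersections $\tilde{x}(s_1)=\tilde{x}(s_2)$ with $0<s_2-s_1<T$; if $s_2-s_1>T$ the interval $[s_1,s_2]$ is not a subinterval of $S_T$, so the definition of a singular $1$-gon does not apply, and the configuration is really an intersection of the line $\tilde{x}$ with one of its translates by a power of $[x]=\tau$. These must be folded into the $2$-gon analysis, where the subintervals $I_1,I_2$ you extract may then fail to be disjoint --- this is exactly the ``same-strand'' case you mention, and it is not optional bookkeeping: without it the dichotomy ``lifts embedded / distinct lines cross at most once'' is not established. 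Second, the linking count is sharp only for primitive $\tau$. If $\tau=\sigma^k$ with $k\ge 2$, the line $\tilde{x}$ and its translate by the primitive root $\sigma$ are distinct lines with the \emph{same} pair of ideal endpoints; they neither link nor can be made disjoint (already for $\sigma$ simple the minimal self-intersection number of $\sigma^k$ is $k-1$, while the linked-pair count is $0$), and the parity argument ``unlinked $\Rightarrow$ even number of crossings'' fails when endpoints coincide. Since the theorem is invoked in the paper for arbitrary nontrivial $\tau$, the proper-power case needs a separate argument. Third, the identification of the minimal self-intersection number with the number of linked translate-pairs (per fundamental domain, correctly normalized under the deck action) is itself a theorem requiring proof, not a remark. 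None of this invalidates the strategy --- all of it is resolved in [HS85] and its successors --- but a self-contained proof would have to close these points.
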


Several Lemmas will be established before we can prove Theorem \ref{thm 1}. 

\begin{lm}
 \label{coercive} Given an arbitrary non-trivial free homotopy class $\tau$, for any $T>0$, there is a $q \in \gtt$, such that $A_{T/2}(q) =c_T(\tau).$
\end{lm}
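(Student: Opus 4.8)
The plan is to run the \emph{direct method of the calculus of variations}. Since every free homotopy class is realized by a smooth collision-free loop, $\gts$ is nonempty and contains loops of finite action, so $c_T(\tau) = \inf\{A_{T/2}(x) : x \in \gtt\}$ is finite; I would fix a minimizing sequence $\{x_n\} \subset \gtt$ with $A_{T/2}(x_n) \to c_T(\tau)$. Because $V \ge 0$, the bound on $A_{T/2}(x_n)$ immediately controls the kinetic part, giving $\int_{-T/2}^{T/2} |\dot{x}_n|^2 \, dt \le C$ uniformly in $n$. By the Wirtinger (Poincar\'e) inequality for $T$-periodic $H^1$ maps, this bounds the oscillation $\|x_n - \bar{x}_n\|_{L^\infty}$, where $\bar{x}_n$ denotes the mean of $x_n$; equivalently, the diameter of each loop is uniformly bounded.

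The crux is to upgrade this to a full $H^1$ bound, which amounts to ruling out escape of the means $\bar{x}_n$ to infinity, and here the constraint $\tau \ne 0$ enters. Suppose, along a subsequence, $|\bar{x}_n| \to +\infty$. Since the oscillation is uniformly bounded, for large $n$ the image of $x_n$ lies in a disk $B(\bar{x}_n, C')$ disjoint from the finite set $\mc{C}$; in particular each such $x_n$ is collision-free and its image sits inside a simply connected subset of $\cx$, so $x_n$ is null-homotopic. On the other hand, weak $H^1$ convergence implies uniform convergence (the embedding $\h(S_T, \cc) \hookrightarrow C^0$ is compact in dimension one), and a straight-line homotopy argument in the open set $\cx$ shows that any collision-free member of $\gtt$ must carry the class $\tau$. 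As $\tau \ne 0$, this contradicts $x_n$ being null-homotopic, so the means stay bounded. Combining with the oscillation bound yields $\sup_n \|x_n\|_{H^1} < +\infty$.

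With coercivity in hand the conclusion is routine: a bounded sequence in the Hilbert space $\h(S_T, \cc)$ has a weakly convergent subsequence $x_n \rightharpoonup q$, and since $\gtt$ is by definition weakly closed, $q \in \gtt$. I would then verify weak lower semicontinuity of $A_{T/2}$: the kinetic term $\int |\dot{x}|^2$ is convex and continuous, hence weakly lower semicontinuous, while for the potential I would combine the uniform convergence $x_n \to q$ with $V \ge 0$ and Fatou's lemma to get $\int V(q) \le \liminf \int V(x_n)$. Therefore $A_{T/2}(q) \le \liminf A_{T/2}(x_n) = c_T(\tau)$, and since $q \in \gtt$ the reverse inequality holds by definition of the infimum, giving $A_{T/2}(q) = c_T(\tau)$. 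The main obstacle is precisely the coercivity step, namely converting the purely kinetic bound into a genuine $H^1$ bound by exploiting $\tau \ne 0$ to prevent the minimizing loops from drifting off to infinity; the weak-compactness and lower-semicontinuity steps are standard. Note that this lemma does not assert $q$ is collision-free, which is the substantially harder issue deferred to the blow-up and deformation arguments.
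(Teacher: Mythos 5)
Your proof is correct, and the one non-routine step — coercivity — is handled by a genuinely different mechanism than the paper's. The paper derives a quantitative a priori bound: since $\tau$ is non-trivial it selects a center $c_j$ with $\mathrm{ind}(x,c_j)\neq 0$, observes that a loop winding around $c_j$ must pass within its own diameter of $c_j$, and concludes $\|x\|_{H^1}\le C_1\|\dot x\|_2+C_2$ for every $x\in\gts$, from which coercivity is immediate. You instead argue by contradiction: the kinetic bound gives a uniform diameter bound (really just Cauchy--Schwarz on $\int|\dot x|$ rather than Wirtinger, but the conclusion is the same), so if the means escaped to infinity the loops would eventually lie in a disk missing $\mc{C}$ and hence be null-homotopic in $\cx$, contradicting $[x_n]=\tau\neq 0$; your identification of the class of a collision-free element of $\gtt$ via uniform convergence and a straight-line homotopy is the right way to make that precise. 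Your soft argument forgoes the explicit estimate but is strictly more general: for $N\ge 2$ the group $\pi_1(\cx)$ is free, and a non-trivial conjugacy class in its commutator subgroup has $\mathrm{ind}(x,c_j)=0$ for every $j$, so the paper's choice of $c_j$ is unavailable there, whereas your argument uses only $\tau\neq 0$ and covers such classes. The remaining steps — weak compactness of bounded sets, weak closedness of $\gtt$, lower semicontinuity of the kinetic term by convexity and of the potential term by uniform convergence plus Fatou — coincide with the paper's.
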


\begin{proof}
	For any $x \in \gts$, let $\| x \|_{\infty}$ be the supremum norm and $\| x \|_2$ the $L^2$ norm. Since $\tau$ is non-trivial, there is at least one $c_j \in \mc{C}$ such that $\text{ind}(x, c_j) \ne 0.$ It is easy to see 
 	$$ \| x \|_{\infty} \le \| x -c_j \|_{\infty} + | c_j|. $$
 	Notice that  for any $s \in S_T$, there must be a $s^* \in S_T$, such that 
 	$$ |x(s) - c_j | \le |x(s) - x(s^*) |. $$
 	At the same time 
 	$$ |x(s) - x(s^*)| \le \int_{S_T} |\dot{x}(t)| \,dt \le T^{\ey} \| \dot{x} \|_2. $$
 	The last inequality follows from the Cauchy-Schwarz inequality. 
 	By the above inequalities we get
 	$$ \|x\|_{\infty} \le \| x- c_j\|_{\infty} + |c_j| \le T^{\ey} \|\dot{x}\|_2 + |c_j|.$$
 	Hence 
 	$$ \|x\|_2 \le T^{\ey} \| x\|_{\infty} \le T\| \dot{x} \|_2 + T^{\ey}|c_j|. $$
 	As a result 
 	$$ \|x\|_{H^1} \le  C_1 \|\dot{x}\|_2 + C_2. $$

 	The last inequality obviously implies the action functional $A_{T/2}$ is coercive on $\gts$. As $A_{T/2}$ is weakly lower semi-continuous, a standard argument in calculus of variation shows that there is a minimizing sequence of loops $\{ q_n \in \gts \}$, which converges weakly to a $q \in \gtt$ with respect to the Sobolev norm (this is also implies $q_n$ converges to $q$ with respect to the supremum norm). Therefore

     $$ \aet(q) = \liminf_{n \to \infty} \aet(q_n) = c_T(\tau). $$

\end{proof}

Given an $x \in \gtt$, we say $\Delta(x): = \{ t \in S_T: x(t) \in \mc{C} \}$ is the set of collision moments. 

\begin{lm}
	\label{isolated} If $y \in \gtt$ satisfies $\aet(y) =c_T(\tau)$, then $\Delta(y)$ is an isolated set and for any $t \in S_T \setminus \Delta(y)$, $y(t)$ is a solution of equation \eqref{ncenter} with constant energy 
	$$ \ey|\dot{y}(t)|^2 -V(y(t)) = h,$$
	for some $h$ independent of $t$. 
\end{lm}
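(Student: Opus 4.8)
The plan is to combine standard variational arguments on the collision-free part of $S_T$ with the Lagrange--Jacobi identity near the singular set. Since $y$ minimizes and $V \ge 0$, the bound $\aet(y) = c_T(\tau) < \infty$ forces $\dot y \in L^2$ and $V(y) \in L^1(S_T)$; in particular $\Delta(y)$ has measure zero. Fixing any $t_0 \in S_T \setminus \Delta(y)$, continuity gives an open interval $J \ni t_0$ on which $y$ avoids $\mc{C}$ and $L$ is smooth. Testing minimality against outer variations $y + s\eta$, $\eta \in C_c^\infty(J, \cc)$, shows $y$ is a weak and hence (by elliptic bootstrapping) classical solution of \eqref{ncenter} on $J$. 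Thus $y$ solves \eqref{ncenter} on all of $S_T \setminus \Delta(y)$, and on each connected component of this open set the energy $E(t) := \ey|\dot y(t)|^2 - V(y(t))$ is constant, being the first integral of the autonomous flow.

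Next I would upgrade this to a single constant $h$ across all components using \emph{inner variations}. For $\phi \in C^\infty(S_T, \rr)$ set $y_s(t) := y(t + s\phi(t))$; for small $s$ the map $t \mapsto t + s\phi(t)$ is an orientation-preserving diffeomorphism of $S_T$, so $y_s$ is a reparametrization of $y$ and lies in $\gtt$, whence $s \mapsto \aet(y_s)$ is minimized at $s=0$. After the change of variable $\tau = t + s\phi(t)$ the action becomes $\aet(y_s) = \int_{-T/2}^{T/2}[\ey|\dot y|^2(1 + s\dot\phi) + V(y)(1+s\dot\phi)^{-1}]\,dt$, whose integrand is dominated for small $s$ by a multiple of $\ey|\dot y|^2 + V(y) \in L^1$. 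Differentiating under the integral at $s=0$ yields $\int_{-T/2}^{T/2} E(t)\dot\phi(t)\,dt = 0$ for every $\phi$, and the du Bois-Reymond lemma gives $E \equiv h$ a.e., which is the asserted energy identity.

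Finally, to show $\Delta(y)$ is isolated I would argue by contradiction: suppose collision times accumulate at some $t^* \in S_T$. By continuity $y(t^*) = c_j$ for a single center $c_j$, and all nearby collisions are with $c_j$. Writing $I(t) = |y(t) - c_j|^2$, the computation behind Lemma \ref{LaJa} uses only \eqref{ncenter} and the energy identity, hence is valid on each collision-free arc near $t^*$ and gives $\ddot I = (4 - 2\al)\frac{m_j}{\al|y - c_j|^{\al}} + B_2$, where $B_2$ is bounded \emph{uniformly} in a neighborhood of $t^*$ because it depends only on the single constant $h$ and the smooth potential $V^*$. Since $4 - 2\al > 0$, there is $\varepsilon > 0$ with $I < \varepsilon \Rightarrow \ddot I > 0$, and by continuity $I < \varepsilon$ on some $(t^*-\eta, t^*+\eta)$. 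Two consecutive collision times $t_n < t_{n+1}$ in this interval then make $I$ a $C^2$ strictly convex function on $[t_n, t_{n+1}]$ with $I(t_n) = I(t_{n+1}) = 0$, forcing $I < 0$ in between and contradicting $I \ge 0$. Hence no accumulation occurs and $\Delta(y)$ is finite, $S_T$ being compact.

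The step I expect to be the main obstacle is the passage from local to global energy conservation: making the inner-variation computation rigorous across the \emph{a priori} uncontrolled singular set. This is exactly what produces the single constant $h$, and hence the uniform bound on $B_2$ that the convexity argument for isolation requires; the outer-variation and convexity steps are comparatively routine once it is in place.
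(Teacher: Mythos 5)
Your proof is correct and follows exactly the route the paper has in mind: the paper omits the proof of Lemma \ref{isolated}, remarking only that it is the standard argument from \cite{C02}, \cite{Ve02}, \cite{FT04} once the Lagrange--Jacobi identity (Lemma \ref{LaJa}) is available, and your three steps (outer variations off the collision set, inner time-reparametrization variations for the single energy constant $h$, and the convexity of $I$ from Lagrange--Jacobi to rule out accumulating collision times) are precisely that argument adapted to the $N$-center setting. The only point worth tightening is the phrase ``two consecutive collision times'': since $\Delta(y)$ is closed and of measure zero, a component of its complement with both endpoints in $\Delta(y)$ exists near any accumulation point, which is what the convexity contradiction actually needs.
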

We omit the proof here as it is a simpler version of a similar result for the N-body problem as long as we have the Lagrange-Jacobi identity, Lemma \ref{LaJa}, where a detailed proof can be find in \cite{C02}, \cite{Ve02} and \cite{FT04}. The above lemma tells us, for a minimizer, its set of collision moments must be finite. The difficulty is to show that it is actually empty. 

For that we need to show it is possible to choose generic loops with no excess self-intersection as our minimizing sequence of loops and the following lemma will the key to that.

\begin{lm}
\label{taut} Given an $x \in \gts$ with a finite $A_{T/2}(x)$, for any $\ep^*>0$, there is a generic loop $\xt \in \gts$ with no excess self-intersection satisfying 
$$ A_{T/2}(\xt) \le A_{T/2}(x) + \ep^*. $$
\end{lm}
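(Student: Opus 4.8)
The plan is to first replace $x$ by a smooth generic competitor with essentially the same action, and then to destroy its excess self-intersections one crossing at a time by homotopies performed over \emph{fixed} time-intervals, so that the period $T$ and the action are both kept under control throughout.

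\emph{Smoothing.} First I would produce a smooth immersion $x_0\in\gts$ in general position with $A_{T/2}(x_0)\le A_{T/2}(x)+\ep^*/2$. Since $x\in H^1(S_T,\cx)$ is continuous with compact image, its trace is bounded away from $\mc{C}$, so $V$ is smooth and Lipschitz on a neighborhood of $x(S_T)$; mollifying $x$ and then applying a $C^\infty$-small transversality perturbation (general position is generic, see \cite{Hi76}) yields such an $x_0$ in the same free homotopy class, the action change being $<\ep^*/2$ because $H^1$-convergence controls the kinetic term while the Lipschitz bound on $V$ controls the potential term. As a generic immersion of a circle, $x_0$ has only finitely many transverse double points.

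\emph{Reduction.} If $x_0$ has no excess self-intersection we are done; otherwise, by Theorem \ref{Hass} it contains a singular $1$-gon or $2$-gon, and I would remove these one at a time. Choose such a configuration whose bounding disk $D$ is innermost, i.e.\ $D$ meets $x_0$ only along the arc(s) forming its boundary; since the offending sub-loop (resp.\ bigon) is null-homotopic in $\cx$, the disk $D$ contains no center. The key move is to leave the time-parametrization of $x_0$ untouched off the relevant parameter interval $I$ (resp.\ intervals $I_1,I_2$) and to replace $x_0|_I$ by a curve $\beta$ with the same endpoints, defined on the \emph{same} interval $I$ and homotopic to $x_0|_I$ rel endpoints across $D$. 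Keeping $I$ and the complementary parametrization fixed guarantees automatically that the new loop is again $T$-periodic and lies in $\tau$, with no reparametrization of the rest of the curve. Taking $\beta$ to be an action minimizer among such curves constrained to a center-free region containing $D$ (which exists by the direct method and is collision-free there, with $x_0|_I=\partial D$ as a competitor) gives $\ey\|\cdot\|$-control, namely $A(I;\beta)\le A(I;x_0|_I)$, so the action does not increase; because $D$ is innermost, pushing $x_0|_I$ across $D$ eliminates the monogon corner (resp.\ the two bigon corners) without dragging other strands through $D$, so the double-point count strictly drops. A final $C^\infty$-small perturbation back to general position, whose cost I would absorb into the remaining $\ep^*/2$ split over the finitely many steps, then produces a generic loop with strictly fewer double points, and iteration terminates at the desired $\xt$.

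The hard part will be the interaction between the topological reduction and the action bound. Cutting a crossing is a homotopy that is generally \emph{not} $C^0$-small, so one cannot keep $\xt$ near $x_0$; and the fixed period forbids the naive ``delete the sub-loop and rescale time'' move, since rescaling the surviving loop back to period $T$ alters the action by an $O(1)$ amount through the potential term (slowing down the traversal multiplies $\int V\,dt$). Performing the surgery over a fixed time-interval and comparing against a minimizer constrained to the center-free disk is exactly what makes the action monotone along the process. The delicate points I would need to verify are that such a constrained minimizer is collision-free and simple enough that the global double-point count genuinely decreases, and that the innermost choice of $D$ really prevents the creation of new intersections; these are where the argument requires the most care.
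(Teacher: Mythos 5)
Your smoothing step is exactly the paper's: density of smooth immersions in general position in $H^1(S_T,\cx)$ plus continuity of $A_{T/2}$ gives the generic competitor at cost $\ep^*/2$. The divergence, and the trouble, is in the reduction step. First, Theorem \ref{Hass} only provides \emph{some} singular $1$-gon or $2$-gon; it does not provide one whose sub-loop is embedded and bounds a disk $D$ meeting the rest of the curve only along its boundary arcs. Your entire surgery is predicated on such an innermost embedded configuration, and extracting it from a general singular gon (whose defining sub-loop may itself be highly non-embedded) is a nontrivial combinatorial step you do not supply. Second, and more seriously, the constrained action minimizer $\beta$ is not geometrically controlled. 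For a $1$-gon the two endpoints of $I$ coincide at the vertex $p=x_0(t_1)=x_0(t_2)$, so $\beta$ is a based loop at $p$: the new curve still visits $p$ at the times $t_1$ and $t_2$, with tangent directions at $p$ that you cannot prescribe, so the transverse double point at the vertex need not become removable; moreover $\beta$ need not be embedded, need not stay inside $\overline{D}$ (only inside the larger center-free region), and can cross the ambient branches of $x_0$ emanating from $p$. Nothing then guarantees that the double-point count strictly drops, so the iteration has no termination argument. You flag these as ``delicate points to verify,'' but they are the crux of the lemma, not finishing details.

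The paper avoids all of this by using surgeries that require no minimization and no innermost disk. For a singular $1$-gon it simply \emph{reverses time} on $\eta|_{[t_1,t_2]}$: the action integrand is invariant under $t\mapsto t_1+t_2-t$, so the action is exactly unchanged; the sub-loop is null-homotopic, so replacing it by its inverse preserves the free homotopy class; and the reversal turns the transverse vertex into a non-transverse self-intersection. For a singular $2$-gon it exchanges the two arcs (the recombination of Figures \ref{N4}, \ref{N5}), again preserving the action exactly and producing two non-transverse vertices. A second $C^\infty$-small perturbation back to general position then resolves each tangency without increasing the transverse crossing number, at cost $\ep$ per step, and the crossing number strictly decreases, so finitely many steps suffice with $\ep=\ep^*/k$. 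If you want to salvage your route, you would need to prove both the innermost-embedded-gon extraction and an embeddedness/avoidance statement for the constrained minimizer; the time-reversal and arc-exchange moves make both unnecessary.
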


\begin{proof}
Recall that for the Lagrange assocated with our problem, it is well known that  $A_{T/2} \in C^1(H^1(S_T, \cx), \rr)$, see \cite{AC93} page $20$.

Because $C^{\infty}(S_T, \cx)$ is a dense subset of $H^1(S_T, \cx)$ (the smooth approximation theorem, \cite{E98} page 252), the space of all smooth immersion $T$-periodic loops is dense in $C^{\infty}(S_T, \cx)$ (Theorem $2.2.12$, \cite{Hi76} page 53), and the space of all smooth immersion $T$-periodic loops in \emph{general position} is dense in the space of all smooth immersion $T$-periodic loops (Exercise $2$, \cite{Hi76} page 82), for any $\ep>0$, we can find a generic loop $\eta \in \gts$ with $A_{T/2}(\eta) \le A_{T/2}(x) + \ep. $

If $\eta$ does not have any excess self-intersection, let $\tilde{x}=\eta$. If it does, by Theorem \ref{Hass}, there must be a singular $1$-gon or singular $2$-gon. 

If there is a singular $1$-gon, say $[t_1, t_2] \subset S_T$ with $\eta(t_1) = \eta(t_2)$, we can just reverse the time on $\eta|_{[t_1, t_2]}$. Obviously the new loop is still in the same free homotopy class. Afterwards $\eta(t_1)=\eta(t_2)$ becomes a non-transverse intersection. Using the approximation results given before we can find a generic loop from $\gts$ in a small enough neighborhood of $\eta$, which eliminates the non-transverse self-intersection without increasing the number of transverse self-intersection and at the same time the action functional is increased by no more than $\ep$.  

If there is a singular $2$-gon, then there are two disjoint subintervals  $[s_1, s_2]$, $[t_1, t_2]$ of $S_T$, with one of the following possibilities
$$\eta(s_1) = \eta(t_1), \eta(s_2) = \eta(t_2) \text{ or } \eta(s_1) = \eta(t_2), \eta(s_2) = \eta(t_1).$$
If it is the first case we make a change as indicated in Figure \ref{N4}. If it is the second case we make a change as indicated in Figure \ref{N5}. In either case we get a loop still contained in the original free homotopy class with two non-transverse self-intersections. Then just repeat the same argument used in the previous case. 

\begin{figure}
	\centering
	\includegraphics[scale=0.70]{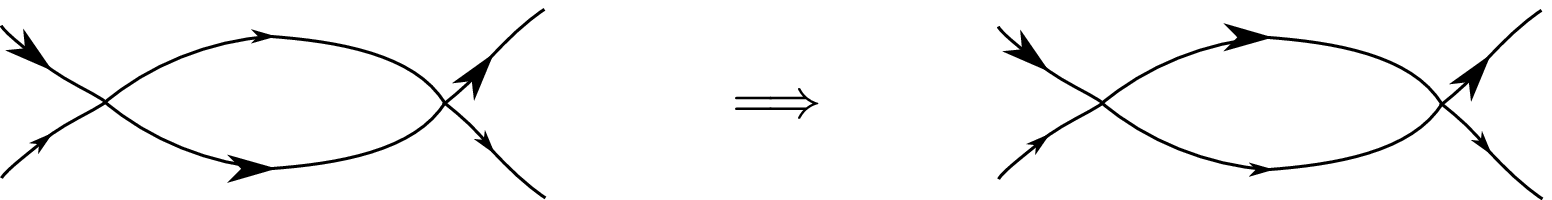}
	\caption{}
	\label{N4}
\end{figure}

\begin{figure}
	\centering
	\includegraphics[scale=0.70]{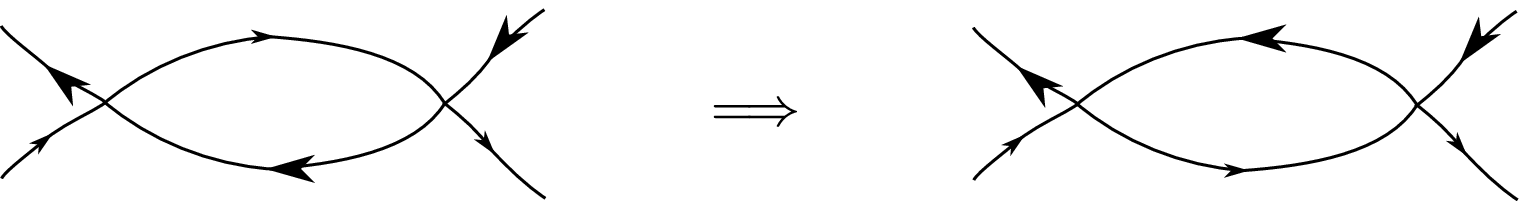}
	\caption{}
	\label{N5}
\end{figure}

Obviously after a finite steps as above we can eliminate all the singular $1$-gons and singular $2$-gons and get a generic loop $\tilde{x} \in \gts$ with no excess self-intersection satisfying 
$$ A_{T/2}(\tilde{x}) \le A_{T/2}(x) + k \ep,$$
for some finite $k \in \zz^+$. Let $\ep = \ep^*/k$ and we are done. 
\end{proof}

Now we are ready to prove Theorem \ref{thm 1}.
\begin{proof}

 [Proof of Theorem \ref{thm 1}] 
 Let $\{q^*_n \in \gts \}$ be a minimizing sequence of loops, i.e., $c_T(\tau) = \lim_{n \to \infty} A_{T/2}(q^*_n)$, and $\{\ep_n\}$ be a sequence of positive constants which converges to zero, then by Lemma \ref{taut} there is a sequence of generic loops $\{q_n \in \gts \}$ with no excess self-intersections satisfying 
 $$ \aet(q_n) \le \aet(q^*_n) + \ep_n. $$
 As a result 
 $$\lim_{n \to \infty} A_{\frac{T}{2}}(q_n) =c_T(\tau).$$ 
 From the proof of Lemma \ref{coercive}, we know after passing $\{q_n\}$ to a subsequence, it converges weakly (Sobolev norm) and uniformly (supremum norm) to a $q \in \gtt$ with 
 $$ \aet(q) = \lim_{n\to \infty} \aet(q_n) = c_T(\tau). $$
 
 Let $\Delta(q)$ be the set of collision moments, we only need to show this set is empty. 
 
 Assume $\Delta(q) \ne \emptyset,$ Lemma \ref{isolated} tells us it must be isolated. Without loss of generality, assume $q(0) = c_1= 0$. Choose an $r>0$ small enough such that $c_1=0$ is the only center contained in the closed disk $B(0, 2r).$ By Proposition \ref{asmp}, we can find a $\dl>0$ small enough such that $q|_{[-2\dl, 2\dl]}$ is an isolated collision solution with $ q([-2\dl, 2\dl]) \subset B(0, r)$. As $q_n$ converges uniformly to $q$, for $n$ large enough we have $q_n([-2\dl, 2\dl]) \subset B(0, 2r)$. 

 We claim for $n$ large enough $q_n|_{[-2\dl, 2\dl]}$ does not have any self-intersection. If not, there must be a subinterval $I \subset [-2\dl, 2\dl]$, such that $q_n|_{I}$ is an innermost sub-loop. As $ q_n(I) \subset B(0, 2r)$, at most one center, $c_1$, can be enclosed by this innermost sub-loop, which is a contradiction to the fact that $\tau$ is an admissible class. 

 Since $q_n$ converges uniformly to $q$, $q|_{[-2\dl, 2\dl]}$ can not have any transverse self-intersection as well. By Defintion \ref{coll solution}, $q|_{[-2\dl, 2\dl]}$ is a simple isolated collision solution. Put $q|_{[-2\dl, 2\dl]}$ in polar coordinates $q(t) = \rho(t)e^{i \tht(t)},$  by Proposition \ref{angular limit}, there are two finite angles $\tht_-, \tht_+$, such that, $ \lim_{t \to 0^{\pm}} \tht(t) = \tht_{\pm}.$ Furthermore we may assume $|\tht_+ -\tht_-| \le 2 \pi$, as we only define the polar coordinates on $q|_{[-2\dl, 2\dl]}$ not the entire loop $q|_{S_T}.$

 If $\tht_+ \in [\tht_-, \tht_- + 2\pi]$ (resp. $\tht_+ \in [\tht_- -2\pi, \tht_-]$), by Proposition \ref{deform}, there is a $\xi^+ \in \Gamma^{*, +}_{\dl}(q)$ (resp. $\xi^- \in \Gm^{*, -}_{\dl}(q)$), where $\Gamma^{*, +}_{\dl}(q)$ (resp.  $\Gm^{*, -}_{\dl}(q)$ ) is defined as in Definition \ref{gm dl} (with $y$ replace by $q$) satisfying 
 $$ A_{\dl}(\xi^+) < A_{\dl}(q) (\text{resp. } A_{\dl}(\xi^-) < A_{\dl}(q)). $$

 According to Definition \ref{gm dl}, $\xi^+$ and $\xi^-$ have the same initial and end points as $q|_{[-\dl, \dl]}$, so we can define two new loops as 
 $$ q^+(t) =
 \begin{cases}
 \xi^+(t), & \text{ if } t \in [-\dl, \dl] \\
  q(t), & \text{ if } t \in S_T \setminus [-\dl, \dl],
 \end{cases}
 $$
 $$ q^-(t) =
 \begin{cases}
 \xi^-(t), & \text{ if } t \in [-\dl, \dl] \\
  q(t), & \text{ if } t \in S_T \setminus [-\dl, \dl].
 \end{cases}
 $$
 Obviously the actional functional of both loops, $q^+, q^-$, are strictly less than $q$. Therefore if we can show that one of them is contained in $\gtt$, it will give us a contradiction and we are done. 

 Choose a $0 < \dl^* < \dl$, as $q|_{[-2\dl, -2\dl+ \dl^*]}$ (resp. $q|_{[2\dl-\dl^*, 2\dl]}$) is a solution of equation \eqref{ncenter} with a positive distance from any centers and $q_n$ converges uniformly to $q$ as $n$ goes to infinity, by the standard result of calculus of variation, when $\dl^*$ is small enough, for each $n$ large enough, the action functional has a unique minimizer $\eta^-_n$ (resp. $\eta^+_n$) among all Sobolev curves defined on $[-2\dl, -2\dl+\dl^*]$ (resp. $[2\dl-\dl^*, 2\dl]$) with the same initial point $q_n(-2\dl)$ (resp. $q_n(2\dl-\dl^*)$) and the same end point $q(-2\dl+ \dl^*)$ (resp. $q_n(2\dl)$). Obviously each $\eta^-_n$ (resp. $\eta^+_n$) is a solution of equation \eqref{ncenter} and it converges to $q|_{[-2\dl, -2\dl +\dl^*]}$ (resp. $q|_{[2\dl-\dl^*, 2\dl]}$) with respect to the $C^1$ norm as $n$ goes to infinity. 

 Now for each $n$ (large enough), we can define a curve $\gm^+_n$ by
  $$ \gm^+_n(t) =
 \begin{cases}
  \eta^-_n(t), & \text{ if } t \in [-2\dl, -2\dl+\dl^*] \\
  q(t), & \text{ if } t \in [-2\dl+\dl^*, -\dl] \\
  \xi^+(t), & \text{ if } t \in [-\dl, \dl] \\
  q(t), & \text{ if } t \in [\dl, 2\dl-\dl^*]\\
  \eta^+_n(t), & \text{ if } t \in [2\dl-\dl^*, 2\dl],
   \end{cases}
 $$
 and a curve $\gm^-_n$ just as above only with $\xi^+$ replaced by $\xi^-$. Using these curves we further define two loops $\qt^+_n, \qt^-_n \in H^1(S_T, \cx)$ by
 $$ \qt^+_n(t) =
 \begin{cases}
 \gm^+_n(t), & \text{ if } t \in [-2\dl, 2\dl] \\
  q_n(t), & \text{ if } t \in S_T \setminus [-2\dl, 2\dl],
 \end{cases}
 $$
 $$ \qt^-_n(t) =
 \begin{cases}
 \gm^-_n(t), & \text{ if } t \in [-2\dl, 2\dl] \\
  q_n(t), & \text{ if } t \in S_T \setminus [-2\dl, 2\dl].
 \end{cases}
 $$

 We can do this beacuase $\gm^+_n$ and $\gm^-_n$ both have the same initial and end points as $q_n|_{[-2\dl, 2\dl]}$. Since $q|_{[-2\dl,2\dl]}$ does not have any transverse self-intersection and so is each $q_n|_{[-2\dl, 2\dl]}$, when $n$ is large enough, either $\gm^+_n$ or $\gm^-_n$ must be homotopic to $q_n|_{[-2\dl, 2\dl]}$ inside $B(0, 2r) \setminus \{0\}$ with both end points fixed. This means either $\qt^+_n$ or $\qt^-_n$ must be contained in $\gtt$. Without loss of generality we may assume this is true for a subsequence of $\qt^+_n$. By the way we define each $\qt^+_n$, it is not hard to see it converges weakly to $q^+$, which means $q^+ \in \gtt$ and this finishes our proof.

\end{proof}

\begin{rem} \label{rm thm 1}
In the Newtonian case, $\al=1$, it is not hard to see the above contradictory argument fails only when we have
\[ |\tht_+ - \tht_-| = 0 (\text{mod } 2 \pi). \]
This is due to the extra condition required in Proposition \ref{deform} (essentially in Theorem \ref{thm 3}), when $\al =1$. 
\end{rem}

\section{ Proof of Theorem \ref{thm 2}}

In this section, we always assume the potential is Newtonian, i.e., $\al =1.$ Let $y(t), t \in [t_0 -\dl, t_0 + \dl]$ be an isolated collision solution with $y(t_0)= c_k \in \mc{C}$. Set $y(t) - c_k = \rho(t) e^{i \tht(t)}$, by Proposition \ref{angular limit}, the following limits exist  $  \lim_{t \to t_0^{\pm}} \tht(t) = \tht_{\pm}(t_0).$ As explained in Remark \ref{rm thm 1}, the key point is to understand what happens when
$$ |\tht_+(t_0) - \tht_-(t_0)| = 0 (\text{mod } 2 \pi). $$

A similar situation was discussed in \cite{ST12}. Following their idea we will perform a local Levi-Civita regularization near the isolated collision and show that under the above condition $y$ must be a \emph{collision-reflection solution}. 
\begin{dfi}
 \label{coll ref} We say $y$ is a \textbf{collision-reflection solution}, if the following is true
 $$ y(t_0 +t) = y(t_0 -t), \quad \forall t \in [0,\dl].$$
\end{dfi}

Let's recall the basics of a Levi-Civita transformation. We introduce a new coordinates $z$ with  
$$ z^2(t) = y(t) - c_k;$$
and a new time parameter $s = s(t), t \in [t_0-\dl, t_0 + \dl]$ with 
$$ ds = |y(t)-c_k|^{-1} dt. $$
To distinguish between the two time parameters, we set $z'(s) =\frac{d}{ds} z(s). $ By Proposition \ref{asmp}, a simple estimate shows that $$\int_{t_0-\dl}^{t_0 + \dl} |y(t)-c_k|^{-1} dt < \infty .$$ 
As a result, we may assume $s(t_0)= 0$ and there are two finite numbers $S^{\pm}>0$, such that 
$$ S^+ = s(t_0 + \dl); \quad S^- = -s(t_0 -\dl).$$
Using the factor that the energy of $y$ is a constant, i.e., there is a constant $h$, such that  
 \begin{equation} \label{energy}
 \ey |\dot{y}(t)|^2 - V(y(t)) = h, \quad \forall t \in [t_0-\dl, t_0 + \dl] \setminus \{t_0\}. \end{equation} 
 A direct computation shows that $z(s), s \in [-S^-, S^+] \setminus \{0 \}$ is a solution of the following equation
\begin{equation}
 \label{reg 1}
 2 z'' = hz + z V_k(z^2 +c_k) + |z|^2 \bar{z} \nabla_y V_k(z^2 + c_k)
\end{equation}
where $ V_k(z^2 +c_k) =V_k(y) = \sum_{j \ne k} \frac{m_j}{|y- c_j|}.$ Obviously the singularity caused by the collision with the center $c_j$ does not exist anymore in the above equation.

\begin{lm} \label{reflect 1}
Let $y$ and $z$ be defined as above, if $ |\tht_+(t_0) - \tht_-(t_0)| = 2 \pi $ then the following one-sided limits are well-defined and satisfying
$$ \lim_{ s \to 0^-} z'(s) = - \lim_{s \to 0^+} z'(s).$$
\end{lm}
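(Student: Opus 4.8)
The plan is to read off the two one-sided limits of $z'(s)$ at $s=0$ directly in the Levi-Civita variables and then compare them using the hypothesis $|\theta_+(t_0)-\theta_-(t_0)|=2\pi$. The first thing I would do is check that the limits $\lim_{s\to 0^{\pm}}z'(s)$ even exist. For this I use the regularized equation \eqref{reg 1}: near the collision $y\to c_k$, so $z\to 0$, and since $V_k$ and $\nabla_y V_k$ are smooth (hence bounded) in a neighborhood of $c_k$, every term on the right-hand side of \eqref{reg 1} stays bounded as $s\to 0$. Thus $z''$ is bounded on a punctured neighborhood of $s=0$, so $z'$ is Lipschitz on each of $(-\eta,0)$ and $(0,\eta)$, and the one-sided limits $z'_\pm:=\lim_{s\to 0^\pm}z'(s)$ exist and are finite.

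Next I would compute these limits explicitly from the asymptotics already established. Differentiating $z^2=y-c_k$ and using $ds=|y-c_k|^{-1}dt=\rho^{-1}dt$ gives $z'=\tfrac12\dot y\,\bar z$; writing $y-c_k=\rho e^{i\theta}$ and $z=\rho^{1/2}e^{i\theta/2}$ for a continuous branch of the square root, this becomes
\[
z'(s)=\tfrac12\rho^{1/2}\bigl(\dot\rho+i\rho\dot\theta\bigr)e^{i\theta/2}.
\]
Now I invoke Proposition \ref{asmp} with $\al=1$ (using $\rho=I^{1/2}$), which yields $\rho^{1/2}\dot\rho\to \tfrac23\mu$ as $t\to 0^+$ and $\rho^{1/2}\dot\rho\to -\tfrac23\mu$ as $t\to 0^-$, together with Proposition \ref{angular limit}, which gives $\rho\dot\theta\to 0$ and $\theta\to\theta_\pm$. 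Hence $z'_+=\tfrac{\mu}{3}e^{i\theta_+/2}$ and $z'_-=-\tfrac{\mu}{3}e^{i\theta_-/2}$; in particular $|z'_+|=|z'_-|=\mu/3$. It is worth noting that the Newtonian exponent $\al=1$ is exactly the case in which this common modulus is finite and nonzero, which is the reason the Levi-Civita regularization closes up here.

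Finally I would feed in the hypothesis. Since $|\theta_+-\theta_-|=2\pi$ we have $\theta_+/2-\theta_-/2=\pm\pi$, so $e^{i\theta_+/2}=-e^{i\theta_-/2}$; substituting this into the two formulas relates $z'_-$ and $z'_+$ up to a sign, and tracking the continuous lift of $z=\sqrt{y-c_k}$ correctly through the collision produces the asserted identity $\lim_{s\to0^-}z'(s)=-\lim_{s\to0^+}z'(s)$. I expect this branch bookkeeping to be the main obstacle: the map $z\mapsto z^2$ is a double cover branched at $z=0$, so a full $2\pi$ advance of $\arg(y-c_k)$ corresponds to only a $\pi$ advance of $\arg z$, i.e.\ a change of sheet, and getting the sign of $z'_-$ relative to $z'_+$ right amounts to choosing the lift of $y-c_k$ across the branch point $z=0$ consistently. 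The equal moduli make this purely a question of phase, so the delicate step is justifying which continuous branch is forced and verifying that the $2\pi$-winding distinguishes the reflecting case from a smooth transit. The payoff, which motivates the precise sign, is that $z'_-=-z'_+$ together with the $s\mapsto-s$ symmetry of \eqref{reg 1} (its right-hand side is odd in $z$) and uniqueness for the regularized initial value problem forces $z(s)=z(-s)$, whence $y(t_0+t)=y(t_0-t)$ and $y$ is a collision-reflection solution in the sense of Definition \ref{coll ref}.
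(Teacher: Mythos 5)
Your setup and the computation of the two one-sided limits coincide with the paper's: both pass to polar coordinates $z=re^{i\phi}$ with $r=\rho^{1/2}$, $\phi=\theta/2$, and read the limits off Propositions \ref{asmp} and \ref{angular limit}. Your values $z'_+=\frac{\mu}{3}e^{i\theta_+/2}$ and $z'_-=-\frac{\mu}{3}e^{i\theta_-/2}$ are the correct ones (since $\dot\rho<0$ for $t<0$, one has $r'(0^-)=-\mu/3$; the paper's own proof writes $\lim_{s\to0^-}z'(s)=+\frac{\mu}{3}e^{i\theta_-/2}$, dropping this minus sign). But your proof is not finished, and the place where you stop is exactly where the content lies. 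With $|\theta_+-\theta_-|=2\pi$ one has $e^{i\theta_+/2}=-e^{i\theta_-/2}$, and substituting this into your two formulas yields $z'_-=+z'_+$, the \emph{opposite} of the asserted identity. You defer the resolution to ``tracking the continuous lift correctly through the collision'' and to ``justifying which continuous branch is forced,'' but no branch is forced: $z\mapsto z^2$ is branched at $z=0$, so on the two components $s<0$ and $s>0$ the lift $z=\pm\rho^{1/2}e^{i\theta/2}$ can be chosen with independent signs, and the relative sign of $z'_-$ versus $z'_+$ is precisely this free choice. Nothing in the setup will produce the minus sign for you; you must either fix the two lifts with an explicit extra relative sign (equivalently, normalize $\theta_+=\theta_-$ rather than $\theta_+-\theta_-=2\pi$), or prove the weaker statement $z'_-=\pm z'_+$ and verify that either sign suffices downstream.

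The gap is benign for the application but real for the lemma as stated. As you yourself observe, the right-hand side of \eqref{reg 1} is odd in $z$, so if $z'_-=+z'_+$ the uniqueness argument of Lemma \ref{reflect 2} applied to $w(s)=-z(-s)$ gives $z(-s)=-z(s)$, and since $y-c_k=z^2$ one still concludes $y(t_0+t)=y(t_0-t)$. A complete write-up should therefore either carry out the branch normalization explicitly or restate the conclusion as $z'_-=\pm z'_+$. As submitted, your (correct) limit formulas combined with the stated hypothesis establish the negation of the displayed identity rather than the identity itself, so the final step is a genuine missing piece rather than routine bookkeeping.
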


\begin{proof}
 To simplify notation, we assume $t_0=0$, $c_k=0$ and $\tht_{\pm}(t_0) = \tht_{\pm}$. Put $z(s)$ in polar coordinates: $z(s) = r(s) e^{i \phi(s)}$, then 
 \begin{equation} \label{zprime}
 z'(s) = [r'(s)+ i r(s) \phi'(s)]e^{i \phi(s)}.
 \end{equation}
Since $z^2 = y = \rho e^{i \tht}$, we have 
$$ r(s) = \rho^{\ey}(t(s)), \quad \phi(s) = \ey \tht(t(s)). $$
Therefore by a direct computation 
$$ r'(s) = \ey \rho^{\ey}(t(s)) \dot{\rho}(t(s)), $$
$$ \phi'(s) = \ey \rho(t(s))\dot{\tht}(t(s)). $$
By Proposition \ref{asmp} and \ref{angular limit}, we can get 
$$ \rho(t) \sim (\mu t)^{\frac{2}{3}}, \quad \dot{\rho}(t) \sim \frac{2}{3} \mu (\mu t)^{-\frac{1}{3}},$$
and 
$$ \lim_{t \to 0^{\pm}} \dot{\tht}(t) = 0. $$
Recall the definition of the time parameter $s$, in particular $s(0)=0$, we have 
$$ \lim_{s \to 0^+} r'(s) = \lim_{t \to 0^+} \ey \rho^{\ey}(t) \dot{\rho}(t) = \frac{1}{3} \mu, $$
$$ \lim_{s \to 0^+} \phi'(s) = \lim_{t \to 0^+} \ey \rho(t) \dot{\tht}(t) = 0. $$
At the same time 
$$ \lim_{s \to 0^+} \phi(s) = \lim_{t \to 0^+} \ey \tht(t) = \tht_+. $$
Plugging the above limits into \eqref{zprime}
$$ \lim_{s \to 0^+} z'(s) = \frac{1}{3} \mu e^{i \frac{\tht_+}{2}}. $$
A similar calculation will show 
$$ \lim_{s \to 0^-} z'(s) = \frac{1}{3} \mu e^{i \frac{\tht_-}{2}}. $$
Now the desired result immediately follows from $|\tht_+ - \tht_-| = 2 \pi.$

\end{proof}

\begin{lm}
 \label{reflect 2} Under the same conditions of Lemma \ref{reflect 1}, $y$ is a collision-reflection solution.
\end{lm}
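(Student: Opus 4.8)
The plan is to work entirely in the regularized picture of equation \eqref{reg 1} and exploit the fact that, after the Levi--Civita transformation, the collision singularity has been removed. Since $y$ is an isolated collision solution meeting only $c_k$ inside $B(0,r)$, the potential $V_k(z^2 + c_k)$ and its gradient are smooth along the curve, so the right-hand side of \eqref{reg 1} is a smooth function of $z$ in a neighborhood of $z = 0$; in particular it vanishes at $z = 0$ (note $z(0) = 0$ since $y(t_0) = c_k$). Thus \eqref{reg 1} is a regular autonomous second-order ODE near $s = 0$. The first step is to upgrade the two one-sided branches into genuine solutions on closed intervals: since $z(s) \to 0$ and, by Lemma \ref{reflect 1}, $z'(s) \to \tfrac{1}{3}\mu e^{i\tht_+/2}$ as $s \to 0^+$ (resp. $z'(s) \to \tfrac{1}{3}\mu e^{i\tht_-/2}$ as $s \to 0^-$), standard ODE extension shows that $z|_{(0,S^+]}$ continues to a smooth solution on all of $[0, S^+]$ with data $z(0) = 0$, $z'(0) = \tfrac{1}{3}\mu e^{i\tht_+/2}$, and likewise on the left.

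Next I would introduce the reflected curve $\check z(s) := z(-s)$. Because \eqref{reg 1} is autonomous and of second order, time reversal preserves solutions, so $\check z$ solves \eqref{reg 1} on $[0, S^-]$. Its initial data at $s = 0$ is $\check z(0) = 0$ and $\check z'(0) = -z'(0^-)$, and here the hypothesis $|\tht_+ - \tht_-| = 2\pi$ enters through Lemma \ref{reflect 1}: it gives $z'(0^-) = -z'(0^+)$, hence $\check z'(0) = z'(0^+)$. Therefore $z$ (on $[0,S^+]$) and $\check z$ (on $[0,S^-]$) solve the same initial value problem for the regular equation \eqref{reg 1}, and by uniqueness of solutions $z(s) = \check z(s) = z(-s)$ on the common interval $[0, \min(S^+, S^-)]$, i.e. $z$ is even there.

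Finally I would transfer this symmetry back to $y$ and close up the interval. Since $dt = |y - c_k|\,ds = |z(s)|^2\,ds$ with $t(0) = 0$, we have $t(s) = \int_0^s |z(\sigma)|^2\,d\sigma$, which is odd in $s$ once $z$ is even; consequently $y(t(s)) = z(s)^2 + c_k = z(-s)^2 + c_k = y(-t(s))$. To see that this covers the whole interval $[0,\dl]$, note that $t(\cdot)$ is strictly increasing with $t(S^+) = \dl$ and $t(-S^-) = -\dl$; if, say, $S^- \le S^+$, then oddness of $t$ forces $t(S^-) = -t(-S^-) = \dl = t(S^+)$, whence $S^- = S^+$ and $z$ is even on all of $[-S^+, S^+]$. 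Thus $y(t_0 + t) = y(t_0 - t)$ for every $t \in [0,\dl]$, which is precisely the definition of a collision-reflection solution.

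I expect the main subtlety to be the treatment of $s = 0$: the originally given $z$ is only continuous there and has a genuine corner (its velocity reverses, $z'(0^-) = -z'(0^+)$), so it is \emph{not} a classical solution through the origin and uniqueness cannot be applied to $z$ directly. The device that resolves this is to regard the two sides as separate forward initial value problems issued from $s = 0$ with the finite data furnished by Lemma \ref{reflect 1}, and then to compare $z$ with its reflection $\check z$ rather than attempting to continue $z$ across the collision.
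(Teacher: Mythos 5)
Your proof is correct and follows essentially the same route as the paper's: reflect the regularized curve, $z^*(s)=z(-s)$, use the autonomy of \eqref{reg 1} and the identity $z'(0^-)=-z'(0^+)$ from Lemma \ref{reflect 1} to see that $z$ and its reflection solve the same regular initial value problem at $s=0$, and conclude by uniqueness. Your additional care in extending the one-sided branches to $s=0$ and in deducing $S^-=S^+$ via the oddness of $t(s)=\int_0^s|z(\sigma)|^2\,d\sigma$ only makes explicit what the paper leaves implicit.
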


\begin{proof}
  Again to simplify the notation, let's assume $t_0=0$ and $y(0)=c_1=0$. Set $z^*(s) = z(-s)$ for $s >0$.  Since \eqref{reg 1} is independent of the time parameter $s$ and the first derivative, $z^*(s)$ also satisfies equation \eqref{reg 1}. 

  By Lemma \ref{reflect 1},
  $$ \lim_{s \to 0^+} (z^*)'(s) = \lim_{s \to 0^+} -z'(-s) = \lim_{s \to 0^-} - z'(s) = \lim_{s \to 0^+} z'(s).$$

  Since $z^*(0) = z(0) = 0$, the existence and uniqueness theorem of ordinary differential equation tells us
  $$z(s) = z^*(s) = z(-s), \text{ for } s >0. $$
  As a result $S^- = S^+$ and 
  $$ y(t) = y(-t), \quad \forall t \in [0, \dl].$$

 \end{proof}

 Now we are ready to prove Theorem \ref{thm 2}.

 \begin{proof}

  [Theorem \ref{thm 2}]
  Following the notations introduced in Section \ref{sec thm 1}, as we mentioned most of the results from that section still hold for the Newtonian potential. Let $ q_n $ (generic loop with no excess self-intersection) and $q$ be the same loops obtained at the beginning of the proof of Theorem \ref{thm 1} with 
  $$ A_{\frac{T}{2}}(q) = \lim_{n \to \infty} A_{\frac{T}{2}}(q_n) = c_{T}(\tau). $$
  
  If $\Delta(q) =\emptyset$, then $q$ is collision free solution of \eqref{ncenter} and we are in the first case. 

  If $\Delta(q) \ne \emptyset$, for any $t_0 \in \Delta(q)$ with $q(t_0) = c_{k_1}$, we can define a local polar coordinates near $q(t_0)= c_{k_1}$
  $$q(t) - c_{k_1} = \rho(t) e^{i \tht(t)}. $$
  By Proposition \ref{angular limit}, the following limits exist $ \tht_{\pm}(t_0) = \lim_{t \to t_0^{\pm}} \tht(t).$  As explained in Remark \ref{rm thm 1}, such a collision can exist only when 
  \begin{equation}
   \label{21} |\tht_+(t_0) - \tht_-(t_0)| =0 (\text{mod } 2 \pi).   \end{equation}
   
  Because the polar coordinates are only defined locally, we may assume \[\tht_+(t_0) - \tht_-(t_0)=2 \pi.\]  This is the condition required in Lemma \ref{reflect 1} and \ref{reflect 2}. Now by Lemma \ref{reflect 2}, there is a $\dl>0$ such that
  $$ q(t_0 +t) = q(t_0 -t) \quad \forall t \in [0, \dl].$$  
  In fact the existence and uniqueness theorem of ordinary differential equation guarantees  above identity will hold until a moment $t_1$ (set $\tb = t_1 -t_0>0$) when one of the following two situations first appears
  $$\dot{q}(t_1) = 0 \quad \text{ or } \quad q(t_1) =c_{k_2} \in \mc{C}. $$

  If $\dot{q}(t_1) = 0$, then $q(t_1)$ is a break point (with zero velocity), so under the gravitational force the test particle has to come back to $q(t_0)$ following the same path, i.e.
  $$ q(t_1 +t) = q(t_1 -t), \quad \forall t \in [0, \tb].$$
  As a result $t_1 + \tb \in \Delta(q)$ with $q(t_1 + \tb) = c_{k_1}$ and the same condition as \eqref{21} must hold at this moment as well. Therefore we can apply Lemma \ref{reflect 2} again and get
  $$ q(t_1 + \tb +t) = q(t_1 + \tb -t), \quad \forall t \in [0, \tb]. $$
  By repeating the above arguments in both directions of the time, we find that $q$ must be a degenerating loop of period $2 \tb$, which goes back and forth between $q(t_0) =c_{k_1}$ and $q(t_1)$ along the curve $q_|{[t_0, t_1]}$ without containing any other centers. Namely it is a collision-reflection solution between $c_{k_1}$ and $q(t_1)$. Which is absurd, as explained in the following.
  
  Because $\tau$ is admissible, each innermost loop of $q_n$ encloses at least two different centers. If $q_n$ converges to a curve as above then it must contain at least two different centers. 
  
  If $q(t_1)  =c_{k_2} \in \mc{C}$ (it is possible that $c_{k_2} =c_{k_1}$), then $t_1 \in \Delta(q)$ with the same condition as \eqref{21} holds again. By the same argument as above we once again can show that $q$ must be a collision-reflection solution between $c_{k_1}$ and $c_{k_2}$ with period $2\tb$. 
  
  As a result, for each $c_j \in \mc{C} \setminus \{c_{k_1}, c_{k_2}\}$, the index of $q$ with respect to $c_j$ is well defined and vanishes: $\text{Ind}(q, c_j)=0.$ 
  
  Obviously the same is also true for any $q_n$ with $n$ large enough, which means for the given free homotopy class $\tau$, we must have 
  $$ \mf{h}(\tau)_j = 0, \quad \forall j \in \{1, \dots, N \} \setminus \{k_1, k_2 \}.$$
  This finishes the entire proof.




 \end{proof}

 \section {The Kepler-type Problem} \label{sec thm 3}
This section will be devoted to the proof of Theorem \ref{thm 3}. As we mentioned the idea is to consider an obstacle minimizing problem that have been studied in \cite{TV07}, \cite{ST12} and \cite{BTV13}. In particular we will first following the approach given in \cite{ST12} to get a result of the fixed energy problem, namely Theorem \ref{collisionless}. The main difference is afterwards we will use this theorem to get a related result of the fixed time problem.

In order to relate the fixed energy problem and fixed time problem, we need to study curves defined on different time interval. Let $\xb$ be a parabolic collision-ejection solution of the Kepler-type problem as defined in Definition \ref{coej}. For any $S, T >0$, we set
$$ \Gm^*_S(\xb; T):= \{ x \in \h([-S, S], \cc \setminus \{ 0 \}): x \text{ satisfies the following conditions } \}. $$
\begin{enumerate}
 \item $x(\pm S) = \xb( \pm T);$
 \item $ \text{Arg}(x(-S)) = \phi_-$ and $\text{Arg}(x(S)) = \phi_+. $
\end{enumerate}

Let $\Gm_S(\xb; T)$ be the weak closure of $\Gm^*_S(x; T)$ in $H^1([-S, S]; \cc).$ We set
$$ \cb(\xb;T) = \inf \{ \ab_S(x): x \in \Gm_S(\xb; T) \text{ for any } S >0 \}. $$

\begin{rem} The readers should notice that 
	\begin{enumerate}
		\item  $\Gm^*_T(\xb; T)= \Gm^*_T(\xb)$ and $\Gm_T(\xb; T)= \Gm_T(\xb)$, if and only if $S=T$,
		\item $\cb(\xb; T)$ is different from $\cb_T(\xb)$ defined in the first section. In fact $\cb(\xb; T) \le \cb_T(\xb)$ and for most choices of $T$, the inequality is strict.
	
	\end{enumerate} 
\end{rem}

Instead of Theorem \ref{thm 3}, we will first prove a slightly weaker result stated as following.

\begin{thm}
 \label{thm 4}
 For any $T > 0$ and $0 < |\phi_+ - \phi_-| \le 2\pi$, there is a $S > 0$ and  $\eta \in \Gm_{S}(\xb; T)$, such that $\ab_{S}(\eta) = \cb(\xb; T). $ 
 \begin{enumerate}
  \item If $\al \in (1,2),$ then $\eta \in \Gm^*_S(\xb; T)$ is a classical solution of the Kepler-type problem with zero energy and  $ \ab_S(\eta) < \ab_T(\xb);$
  \item If $\al =1$ and $| \phi_+ - \phi_-| < 2 \pi,$ then $\eta \in \Gm^*_S(\xb; T)$ is a classical solution of the Kepler-type problem with zero energy and  $ \ab_S(\eta) < \ab_T(\xb);$
 \end{enumerate}

\end{thm}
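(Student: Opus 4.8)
The plan is to trade this free-time, fixed-endpoint problem for a fixed-energy one through the Maupertuis--Jacobi principle, to solve the fixed-energy problem on the exact flat-cone geometry of the one-center problem, and then to transport the minimizer back. Every competitor in the definition of $\cb(\xb;T)$ may be assumed to have finite action, and the pointwise inequality $\ey|\dot x|^2+\vb(x)\ge \sqrt{2\vb(x)}\,|\dot x|$ (with equality exactly at zero energy $\ey|\dot x|^2=\vb(x)$) gives $\ab_S(x)\ge \mathcal{L}(x):=\int\sqrt{2\vb(x)}\,|\dot x|\,dt$, the zero-energy Jacobi length, which depends only on the image of $x$. Conversely, any rectifiable path joining $\xb(-T)$ to $\xb(T)$ with endpoint arguments $\phi_\mp$ can be reparametrized by the zero-energy time law $dt=(2\vb)^{-1/2}|dx|$, producing some $S>0$ and a curve in $\Gm_S(\xb;T)$ whose action equals its Jacobi length. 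Hence $\cb(\xb;T)=\inf_{\Gm}\mathcal{L}(\Gm)$, and the whole problem reduces to minimizing Jacobi length between the two rays.

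First I would record the geometry. Since $\vb(x)=\frac{m_1}{\al|x|^{\al}}$ exactly, the Jacobi metric is the conformal metric $\sqrt{2m_1/\al}\,|x|^{-\al/2}|dx|$, which is flat away from the origin: in polar coordinates $x=r e^{i\theta}$ the substitution $u\propto r^{1-\al/2}$, $\psi=(1-\al/2)\theta$ turns it into $du^2+u^2\,d\psi^2$. Thus $\cc\setminus\{0\}$ with the Jacobi metric is isometric to a flat cone of total cone angle $\beta=\pi(2-\al)$, the collision sitting at the vertex, and the two boundary points lie at equal cone-distance $u_T$ from the vertex and span cone-angle $\Delta\psi=(1-\al/2)\,|\phi_+-\phi_-|$. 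Minimizing $\mathcal{L}$ is now the elementary problem of the shortest path on this cone between those two points.

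Next comes existence and the decisive step of excluding collisions. Coercivity is clear, since $\al<2$ forces the Jacobi length of any path reaching radius $R$ to grow like $R^{1-\al/2}\to\infty$, so minimizing sequences stay in a bounded region; together with lower semicontinuity this yields a minimizer $\Gm_0$ in the weak closure (this is the fixed-energy statement, obtained by the obstacle method of \cite{ST12}, cf. Theorem \ref{collisionless}). The crux is that $\Gm_0$ avoids the vertex. On the cone, the geodesic between two points at equal distance $u_T$ subtending $\Delta\psi$ is the vertex path, of length $2u_T$, when $\Delta\psi\ge\pi$, and the vertex-avoiding chord, of length $2u_T\sin(\Delta\psi/2)<2u_T$, when $\Delta\psi<\pi$. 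Since $|\phi_+-\phi_-|\le 2\pi$ gives $\Delta\psi\le\pi(2-\al)\le\pi$, with $\Delta\psi<\pi$ holding precisely when $\al\in(1,2)$, or when $\al=1$ and $|\phi_+-\phi_-|<2\pi$, in exactly cases $(1)$ and $(2)$ the minimizer cannot touch the origin. I expect this local comparison near the vertex --- that any vertex-touching competitor is strictly shortened by a nearby chord, while making the angular constraint and admissibility pass to the weak limit so that a hypothetical collision minimizer would have to be the vertex path itself --- to be the main obstacle and the real content of the argument.

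Finally I would assemble the statement. Reparametrizing the collision-free $\Gm_0$ to zero energy gives an $S>0$ and $\eta\in\Gm^*_S(\xb;T)$ with $\ab_S(\eta)=\mathcal{L}(\Gm_0)=\cb(\xb;T)$; being a length-minimizing geodesic off the origin reparametrized to constant energy, $\eta$ satisfies the Euler--Lagrange equations and so is a classical zero-energy solution (here $\phi_+\ne\phi_-$ is used to keep $\eta$ nondegenerate). For the strict inequality, $\xb|_{[-T,T]}$ is itself the zero-energy vertex path, so $\ab_T(\xb)=\mathcal{L}(\xb|_{[-T,T]})=2u_T$, whereas $\ab_S(\eta)=2u_T\sin(\Delta\psi/2)<2u_T$ by the same cone comparison; this gives $\ab_S(\eta)<\ab_T(\xb)$ and completes both cases.
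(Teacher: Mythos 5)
Your argument is correct, but it reaches the conclusion by a genuinely different route from the paper. The paper reduces the free-time problem to a fixed-energy one through the Maupertuis functional $\Mb_0(x)=\int\ey|\dot x|^2\int\vb(x)$ (Theorem \ref{collisionless}, transported back by the correspondence of Lemma $3.1$ in \cite{BTV14}), and excludes collisions by the obstacle method: it constrains minimizers to graze the circle of radius $\rho$, proves their regularity and the conservation of angular momentum across the grazing arc (Lemmas \ref{restricted mini}--\ref{ro2}), and then computes explicitly that the total angle swept tends to $\tfrac{2\pi}{2-\al}$ as $\rho\to0$, contradicting $|\phi_+-\phi_-|\le2\pi$. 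You instead pass to the zero-energy Jacobi length $\int\sqrt{2\vb}\,|\dot x|$ (the ``square root'' of $\Mb_0$, equivalent to it by Cauchy--Schwarz with equality at constant energy) and observe that the Jacobi metric of the homogeneous one-center potential is exactly a flat cone of total angle $\pi(2-\al)$, so that collision avoidance becomes the elementary comparison between the vertex path of length $2u_T$ and the chord of length $2u_T\sin(\Delta\psi/2)$ with developing angle $\Delta\psi=(1-\tfrac{\al}{2})|\phi_+-\phi_-|<\pi$. The two computations are literally the same geometry --- the paper's limiting swept angle $\tfrac{2\pi}{2-\al}$ is precisely your $\Delta\psi\to\pi$ after the change of angular variable $\psi=(1-\tfrac{\al}{2})\theta$ --- but your version replaces the entire obstacle machinery with one explicit isometry, makes the thresholds $\al>1$ and $|\phi_+-\phi_-|<2\pi$ transparent, and even exhibits the minimizer as the explicit zero-energy conic-type orbit. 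What the paper's heavier route buys is robustness: the obstacle argument does not rely on exact homogeneity of the potential and yields the intermediate monotonicity structure $\mbb(\rho)$ used in the proof of Theorem \ref{collisionless}; your cone isometry is special to the pure power law, which suffices here since the perturbation by the other centers is handled separately by the blow-up limit of Section \ref{blow up}. The one point to make fully explicit in your write-up is that every element of the weak closure $\Gm_S(\xb;T)$ either touches the origin or, by uniform convergence, carries the prescribed winding, so the cone comparison really bounds the infimum over the whole admissible class and not just over the smooth competitors.
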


\begin{rem}
    Theorem \ref{thm 3} shows that $\xb|_{[-T,T]}$ is not a minimizer of the fixed end problem with the same time interval and Theorem \ref{thm 4} shows that $\xb|_{[-T,T]}$ is not a minimizer of the fixed end problem with an arbitrary time interval. Therefore the result obtained in Theorem \ref{thm 4} is weaker than in Theorem \ref{thm 3}. However we will show that Theorem \ref{thm 4} actually implies Theorem \ref{thm 3}. 
\end{rem}

In Theorem \ref{thm 4}, we need to show $\xb$ is not a free-time minimizer in certain admissible class of curves. As the time interval is not fixed, it is more natural to study the \emph{Maupertuis' functional} rather than the action functional.

For a given energy $h$, the associated Maupertuis' functional of the Kepler-type problem is defined as
$$ \Mb_h([-T, T];x) := \int_{-T}^{T} \frac{1}{2} |\dot{x}|^2 \,dt \int_{-T}^T \vb(x(t)) + h \,dt, \quad \forall x \in \h([-T,T], \cc). $$

\begin{thm} \label{thm mf}
For any $a<b$, If $y \in \h([a,b], \cc \setminus \{0 \})$ is a critical point of $\Mb_h$ with $\Mb_h(y) >0$, then $y(t)$ is a classical solution of
 \begin{equation}
 \begin{cases}
  \om^2(y) \ddot{y}(t) = \nabla \vb (y(t)), & t \in [a, b], \\
  \frac{1}{2} |\dot{y}(t)|^2 - \frac{V(y(t))}{\om^2(y)} = \frac{h}{\om^2(y)}, & t \in [a, b], \\
  y(a) = q_1, y(b) = q_2,
 \end{cases}
 \end{equation}
where $\om(y)$ is a positive number defined by
\begin{equation} \label{omega}
 \om^2(y) := \frac{\int_a^b V(y(t)) + h \,dt}{ \int_a^b \frac{1}{2} | \dot{y}(t)|^2 \,dt}.
\end{equation}
Furthermore $x(t): = y(\om(y) t), t \in [\frac{a}{\om(y)}, \frac{b}{\om(y)}]$  is a classical solution of the Kepler-type problem with energy $h$, i.e., for any $t \in [a,b]$
$$ \begin{cases}
 \ddot{x}(t) = \nabla \vb (x(t)), \\
 \frac{1}{2}|\dot{x}(t)|^2 - V(x(t)) = h.
\end{cases}
$$
\end{thm}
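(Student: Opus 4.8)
The plan is to recognize Theorem \ref{thm mf} as a version of the classical \emph{Maupertuis--Jacobi principle}, and to prove it by computing the first variation of $\Mb_h$ and then carefully identifying the conserved energy with the prescribed value $h$. Throughout I would write
$$ K(y) := \int_a^b \ey |\dot y(t)|^2\,dt, \qquad U(y) := \int_a^b \big(\vb(y(t)) + h\big)\,dt, $$
so that $\Mb_h(y) = K(y)\,U(y)$ and $\om^2(y) = U(y)/K(y)$. First I would record the two positivity facts forced by the hypothesis $\Mb_h(y) > 0$: since $K(y) \ge 0$ with equality only for constant curves (which would make $\Mb_h(y) = 0$), we have $K(y) > 0$, and then $U(y) > 0$ as well; in particular $\om^2(y)$ is a well-defined positive number. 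Note also that an $H^1$ curve into the open set $\cc \setminus \{0\}$ is continuous on the compact interval $[a,b]$, hence stays a positive distance from the origin, so $\nabla\vb(y)$ is bounded and smooth along $y$.

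Next I would compute the Gateaux derivative of $\Mb_h$ at $y$ in an arbitrary direction $\xi \in \h([a,b],\cc)$ with $\xi(a) = \xi(b) = 0$ (so that the fixed endpoint conditions $y(a) = q_1$, $y(b) = q_2$ are preserved). By the product rule,
$$ D\Mb_h(y)[\xi] = U(y)\int_a^b \langle \dot y, \dot \xi\rangle\,dt + K(y)\int_a^b \langle \nabla \vb(y), \xi\rangle\,dt. $$
An integration by parts in the first integral (the boundary terms vanish) together with the fundamental lemma of the calculus of variations turns the criticality condition $D\Mb_h(y)[\xi]=0$ into the weak equation $U(y)\ddot y = K(y)\nabla\vb(y)$, i.e. $\om^2(y)\ddot y = \nabla\vb(y)$. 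A standard bootstrap (the right-hand side is smooth in $y$, and $y$ stays away from the origin) upgrades the weak solution to a classical one, giving the first line of the system.

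To obtain the energy relation I would dot this equation with $\dot y$, yielding
$$ \frac{d}{dt}\Big( \om^2(y)\,\ey|\dot y|^2 - \vb(y)\Big) = 0, $$
so that $E := \om^2(y)\,\ey|\dot y|^2 - \vb(y)$ is constant in $t$ (recall $\om(y)$ is a fixed number). The main point — the only genuinely non-formal step — is to show $E = h$. For this I would integrate the constant $E$ over $[a,b]$ and substitute $\om^2(y)K(y) = U(y) = \int_a^b \vb(y)\,dt + h(b-a)$:
$$ E(b-a) = \om^2(y)K(y) - \int_a^b \vb(y)\,dt = h(b-a), $$
hence $E = h$. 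Dividing $E = h$ through by $\om^2(y)$ gives the second line of the system.

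Finally, the reparametrization is a direct check: setting $x(t) := y(\om(y)t)$ on $[a/\om(y), b/\om(y)]$ and using that $\om(y)$ is constant, one computes $\ddot x(t) = \om^2(y)\ddot y(\om(y)t) = \nabla\vb(x(t))$ and $\ey|\dot x|^2 - \vb(x) = \om^2(y)\,\ey|\dot y|^2 - \vb(y) = h$, so $x$ solves the Kepler-type problem with energy exactly $h$. I expect the computations above to be routine; the steps requiring the most care are the a priori positivity of $\om^2(y)$ (needed so that the reparametrization and the final division are legitimate) and the identification $E = h$, which is where the $+h$ built into the Maupertuis functional and the definition of $\om(y)$ are used in an essential way.
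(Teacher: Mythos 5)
Your proof is correct and complete: the first-variation computation, the identification $E=h$ via integrating the conserved quantity against the definition of $\om^2(y)$, and the reparametrization check are all sound, and you rightly flag the positivity of $K(y)$ and $U(y)$ as the point that makes $\om^2(y)$ well defined. The paper itself offers no proof of this theorem, calling it a standard result and citing \cite{AC93}; your argument is precisely the classical Maupertuis--Jacobi principle derivation that that reference contains, so there is nothing substantive to compare.
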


This is a standard result, whose proof can be found in \cite{AC93}.

As a parabolic solution, $\xb(t)$'s energy is zero, we only need to study $\Mb_0$. To simplify notation, set $\Mb:= \Mb_0.$

Let
$$ \mbb_T = \inf \{ \Mb([-T,T]; x): x \in \Gm_T(\xb) \}, $$
Where $\Gm_T(\xb)$ is defined as in Section \ref{sec intro}. We will prove the following result regarding the Maupertuis' functional and obtain Theorem \ref{thm 4} as its corollary.

\begin{thm} \label{collisionless}
 For any $T>0$ and $ 0 < |\phi_+ - \phi_-| \le 2 \pi$, there is a $y \in \Gm_T(\xb)$ such that $\Mb([-T,T];y) = \mbb_T >0 .$
  \begin{enumerate}
  \item  If $\al \in (1,2),$ then $y \in \Gm^*_T(\xb)$ is a classical solution of the Kepler-type problem with zero energy and $\Mb([-T,T];y) < \Mb([-T,T]; \xb);$ 
  \item If $\al =1$ and $|\phi_+ - \phi_-| < 2 \pi,$ then $ y \in \Gm^*_T(\xb)$ is a classical solution of the Kepler-type problem with zero energy and $\Mb([-T,T];y) < \Mb([-T,T]; \xb)$ and $|y(t)| >0$ for any $t \in [-T, T] \setminus \{0 \}.$
 \end{enumerate}

\end{thm}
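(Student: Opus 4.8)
The plan is to produce $y$ by the direct method applied to the Maupertuis' functional $\bar{M}=\bar{M}_0$ on the weakly closed set $\Gamma_T(\bar{x})$, and then to exclude collisions of the minimizer by a local comparison argument whose threshold is exactly $2\pi$. First I would settle existence together with the strict positivity $\bar{m}_T>0$. Since $\bar{M}([-T,T];x)=\big(\int_{-T}^{T}\tfrac{1}{2}|\dot{x}|^{2}\big)\big(\int_{-T}^{T}\bar{V}(x)\big)$ is a product of two nonnegative, weakly lower semicontinuous factors, the only delicate point is coercivity: a minimizing sequence might drive the kinetic factor to infinity while the potential factor (which decays as $|x|\to\infty$) collapses. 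I would rule this out using the homogeneity of $\bar{V}$ together with the pinned endpoints $x(\pm T)=\bar{x}(\pm T)$, which lie on the fixed circle of radius $(\mu T)^{\frac{2}{\al+2}}$ and prevent the curves from escaping; a minimizing sequence is then bounded in $H^1$ and converges weakly to some $y\in\Gamma_T(\bar{x})$. Positivity is immediate: $\bar{V}>0$ keeps the second factor positive, and when $\phi_+\neq\phi_-$ the boundary data force $y$ to be non-constant, so the first factor is positive as well.

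Next I would analyze a minimizer. On any subinterval where $y$ avoids the origin it is a critical point of $\bar{M}$, so Theorem \ref{thm mf} applies and, after the reparametrization $t\mapsto\om(y)t$, $y$ is a classical zero-energy solution of the Kepler-type problem. Conservation of energy together with the Lagrange--Jacobi identity (Lemma \ref{LaJa}) shows, exactly as in Lemma \ref{isolated}, that the collision set of $y$ is finite, and near any collision the asymptotic estimates of Proposition \ref{asmp} and the angular convergence of Proposition \ref{angular limit} furnish a parabolic collision-ejection blow-up with well-defined incoming and outgoing angular limits $\theta_-,\theta_+$, which after a choice of local polar coordinates satisfy $|\theta_+-\theta_-|\le 2\pi$.

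The heart of the proof, and the step I expect to be the main obstacle, is the exclusion of collisions. Suppose $y$ collided at an interior time. I would build a collision-free competitor that agrees with $y$ outside a small neighborhood of that time and, inside it, splices in a short zero-energy arc joining the two nearly-radial branches while avoiding the origin. Whether such an arc can be inserted, and how much action it costs, is governed by the swept angle $|\theta_+-\theta_-|$, and the crucial quantitative input is a Gordon-type estimate: for the Kepler-type potential a collision-ejection carries strictly more action than an encircling arc precisely when the swept angle lies below the threshold $2\pi$. This is where the dichotomy enters. For $\al\in(1,2)$ the stronger singularity makes the collision strictly more expensive even at the borderline $|\theta_+-\theta_-|=2\pi$, whereas for $\al=1$ one only recovers the strict gain when $|\theta_+-\theta_-|<2\pi$, the equality case $=2\pi$ being exactly Gordon's theorem and explaining the sharpness of the Newtonian restriction. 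The subtle bookkeeping here is to track how the local angle $|\theta_+-\theta_-|$ is constrained by the boundary datum $|\phi_+-\phi_-|$, so that the hypotheses $|\phi_+-\phi_-|\le 2\pi$ (resp. $<2\pi$) indeed force the collision angle into the regime where the splicing strictly lowers $\bar{M}$, contradicting minimality.

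Having excluded collisions, $y\in\Gamma^*_T(\bar{x})$ and is a genuine zero-energy solution of the Kepler-type problem, and in the Newtonian case this also yields $|y(t)|>0$ for all $t\neq 0$. Finally, since $\bar{x}\in\Gamma_T(\bar{x})$ itself carries a collision, it cannot coincide with the collision-free minimizer; combining $\bar{M}([-T,T];y)\le\bar{M}([-T,T];\bar{x})$ with this fact forces the strict inequality $\bar{M}([-T,T];y)<\bar{M}([-T,T];\bar{x})$, which completes the argument. The routine parts are the direct-method existence and the reparametrization to a classical solution via Theorem \ref{thm mf}; the genuinely hard part is the threshold comparison that simultaneously beats every collision path and pins down the precise $\al$-dependent admissible angle.
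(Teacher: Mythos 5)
Your overall skeleton (direct method for $\Mb$ on $\Gm_T(\xb)$, regularity away from the origin via Theorem \ref{thm mf}, a $2\pi$ angular threshold separating $\al\in(1,2)$ from $\al=1$) matches the paper, and your coercivity sketch is essentially the paper's Lemma \ref{ro4} (which makes it precise via Chen's noncentral condition, yielding $\Mb(x)\ge C\|x\|_{H^1}^{2-\al}$). But there is a genuine gap at the step you yourself flag as the heart of the proof. You exclude collisions by splicing in a collision-free arc near the collision time and appealing to ``a Gordon-type estimate: a collision-ejection carries strictly more action than an encircling arc precisely when the swept angle lies below $2\pi$.'' That estimate \emph{is} the content of Theorem \ref{collisionless}; it is not an available input, and you give no argument for it. Your proposal therefore assumes what is to be proved. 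There are also two technical obstructions to making the local splicing work as stated: a minimizer of $\Mb$ could a priori collide more than once, and the local angles $\theta_\pm$ at a collision are not directly controlled by the boundary datum $|\phi_+-\phi_-|$, so the hypothesis $|\phi_+-\phi_-|\le 2\pi$ does not obviously constrain the collision angle into the favorable regime.

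The paper's actual mechanism is different and avoids both issues. First, Gordon's uniqueness result (Lemma \ref{ro5}) shows that $\xb|_{[-T,T]}$ is the \emph{only} minimizer among paths through the origin, so one only needs to beat $\xb$ itself. This is done globally via an obstacle problem: with $\bar m_T(\ro)$ the infimum over curves whose minimal distance to the origin equals $\ro$, the key computation (Lemma \ref{ro3}) uses conservation of angular momentum and the zero-energy identity to show that any $C^1$ minimizer of the two-sided obstacle problem that actually touches the inner circle of radius $\ro$ must sweep a total angle at least $\frac{2}{2-\al}\bigl(\pi-2\sin^{-1}(\ro/r^*)\bigr)$, which tends to $\frac{2\pi}{2-\al}>2\pi$ for $\al\in(1,2)$ (resp.\ to $2\pi$ for $\al=1$) as $\ro\to0$ --- incompatible with $|\phi_+-\phi_-|\le 2\pi$ (resp.\ $<2\pi$). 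Hence $\mc{M}_T(\ro,\rob)=\emptyset$ for small radii, which forces $\ro\mapsto\bar m_T(\ro)$ to be strictly decreasing near $0$; combined with $\limsup_{\ro\to0^+}\bar m_T(\ro)\le\bar m_T(0)=\Mb([-T,T];\xb)$ (Lemma \ref{ro1}), this contradicts the assumption that the collision path is a global minimizer and simultaneously yields the strict inequality $\Mb([-T,T];y)<\Mb([-T,T];\xb)$. To repair your proof you would need to supply this (or an equivalent) quantitative argument; the local surgery framework alone does not produce it.
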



First we introduce some notations that will be needed in the proof. 
\begin{dfi} \label{notations}
	For any  $\rob  > \rho \ge 0$ small enough and $T>0$, we set
	\begin{align*}
	\Gm_T(\xb; \ro) & := \{x \in \Gm_T(\xb): \min_{t \in [-T, T]} |x(t)| = \ro \}, \\
	\bar{m}_T(\ro) & := \inf\{ \Mb([-T,T];x): x \in \Gm_T(\xb; \ro) \},\\
	\mc{M}_T(\rho) & := \{ x \in \Gm_T(\xb): \Mb([-T,T];x) = \bar{m}_T(\rho) \}; \\
	\Gm_T(\xb; \ro, \bar{\ro}) & : = \{ x \in \Gm_T(\xb): \min_{t \in [-T, T]} |x(t)| \in [\ro, \rob] \};\\
	\mbb_T(\ro, \rob) &:= \inf\{ \Mb(x): x \in \Gm_T(\xb; \ro, \rob) \};\\
	\mc{M}_T(\ro, \rob) & : = \{ x \in \Gm_T(\xb; \ro, \rob): \Mb(x) = \mbb(\ro, \rob) \text{ and } \min_{t \in [-T, T]} |x(t)| < \rob \}. 
	\end{align*}
\end{dfi}

For the rest of this section, except in the proof of Theorem \ref{thm 3}, which will given at the very end, we will fix an arbitrary $T>0$ and assume
\begin{equation}
\label{nonzero}  0 < |\phi_+ - \phi_-| \le 2 \pi,
\end{equation}
To simplify notation, we omit the subindex $T$ from all the notions introduced in Definition \ref{notations} and the $[-T,T]$ from $\Mb([-T, T]; x).$

\begin{lm} \label{ro4}
 For any $\rho \ge 0$, $\mbb(\ro) > 0$ and $\mc{M}(\rho) \ne \emptyset. $
\end{lm}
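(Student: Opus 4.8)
The plan is to run the direct method on the obstacle class $\Gmr$, after converting the product structure of $\Mb$ into a Jacobi--length estimate that delivers both positivity and compactness. For any $x\in\Gmr$, Cauchy--Schwarz applied to the two factors of $\Mb(x)=\big(\int_{-T}^{T}\ey|\dot x|^2\,dt\big)\big(\int_{-T}^{T}\vb(x)\,dt\big)$ gives
\[ \Mb(x)\ \ge\ \ey\Big(\int_{-T}^{T}|\dot x(t)|\sqrt{\vb(x(t))}\,dt\Big)^2=\ey\,\ell(x)^2, \]
where $\ell(x)$ is the length of $x$ in the Jacobi--Maupertuis metric $\sqrt{\vb(x)}\,|dx|=\sqrt{m_1/\al}\,|x|^{-\al/2}\,|dx|$, a conformal metric that is smooth and positive definite on $\cc\setminus\{0\}$. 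It thus suffices to bound $\ell(x)$ below uniformly. Write $A=\xb(-T)$, $B=\xb(T)$, so $|A|=|B|=(\mu T)^{\power}>0$, and let $d(\cdot,\cdot)$ be the induced distance. Under \eqref{nonzero} there are three cases: if $\rho=0$ then every $x\in\Gmr$ hits the origin, so $\ell(x)\ge d(A,0)+d(0,B)$, and $d(A,0)\ge\int_0^{|A|}\sqrt{m_1/\al}\,r^{-\al/2}\,dr>0$ because $\al<2$; if $\rho>0$ and $|\phi_+-\phi_-|<2\pi$ then $A\neq B$ and $\ell(x)\ge d(A,B)>0$; if $\rho>0$ and $|\phi_+-\phi_-|=2\pi$ then $A=B$ while $x$ winds once around $0$ in $\{|x|\ge\rho\}$, so estimating by the angular part and using $1-\al/2>0$ gives $\ell(x)\ge 2\pi\sqrt{m_1/\al}\,\rho^{1-\al/2}>0$. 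In each case $\Mb$ is bounded below on $\Gmr$ by a positive constant, so $\mbb(\ro)>0$.

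For existence I would check first that $\Gmr\neq\emptyset$ with $\mbb(\ro)<\infty$: for $\rho=0$ the parabolic solution $\xb$ is itself admissible and has finite Maupertuis value exactly because $\al<2$, while for $\rho>0$ an explicit curve descending to modulus $\rho$ serves as a competitor. Let $\{x_n\}\subset\Gmr$ be minimizing. The reparametrization--invariant length satisfies $\ell(x_n)^2\le 2\Mb(x_n)$, hence is uniformly bounded; since for $\al<2$ a radial segment has Jacobi length $\int^{R}\sqrt{m_1/\al}\,r^{-\al/2}\,dr\to\infty$ as $R\to\infty$, comparing $\ell(x_n)$ with the radial distance from $A$ to the point of maximal modulus confines all $x_n$ to a fixed ball $\{|x|\le R\}$. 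On that ball $\vb(x_n)\ge m_1/(\al R^{\al})$, so $\int_{-T}^{T}\vb(x_n)\ge c>0$, whence $\int_{-T}^{T}\ey|\dot x_n|^2=\Mb(x_n)/\int_{-T}^{T}\vb(x_n)$ is bounded; together with $\int_{-T}^{T}|x_n|^2\le 2TR^2$ this gives a uniform $H^1$ bound. Passing to a subsequence with $x_n\rightharpoonup x$ in $H^1$ and $x_n\to x$ uniformly, weak lower semicontinuity of the Dirichlet integral and Fatou's lemma (valid as $\vb\ge0$) make both nonnegative factors lower semicontinuous, so $\Mb(x)\le\mbb(\ro)$. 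The endpoint and argument conditions survive uniform convergence, $\Gm_T(\xb)$ is weakly closed, and $\min_t|x|=\lim_n\min_t|x_n|=\rho$; thus $x\in\Gmr$, which forces $\Mb(x)=\mbb(\ro)$ and $\mc{M}(\ro)\neq\emptyset$.

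The step I expect to be the crux is the a priori bound on the minimizing sequence. The functional offers no direct control of $\int_{-T}^{T}\ey|\dot x_n|^2$; the Jacobi metric degenerates at infinity while $\vb$ blows up at the origin, so the only available leverage is the reparametrization--invariant length bound $\ell(x_n)^2\le 2\Mb(x_n)$. It is precisely $\al<2$ --- completeness of the Jacobi metric at infinity, together with integrability of $\vb$ along collisions used for the competitor --- that converts this into confinement in a fixed ball and hence into the lower bound on $\int_{-T}^{T}\vb(x_n)$ that rescues the kinetic term. The one subtlety I would flag is verifying that the equality constraint $\min_t|x|=\rho$ is preserved under uniform convergence; this is what keeps the limit inside the obstacle class $\Gmr$ rather than letting the minimal modulus drop, and it is why the minimizer lives on the prescribed level set.
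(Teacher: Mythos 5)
Your proof is correct, and while the overall skeleton is the same direct method (uniform bound on a minimizing sequence, weak/uniform convergence, lower semicontinuity of both factors, preservation of the constraints $x(\pm T)$, $\mathrm{Arg}$, and $\min_t|x|$ under uniform convergence), the two key estimates are obtained by a genuinely different route than the paper's. The paper proves outright coercivity of $\Mb$ on $\Gmr$: it invokes Chen's \emph{noncentral} condition to get $\|x\|_\infty\le C\,\dl(x)$ with $\dl(x)$ the diameter of the trace, combines this with $\int\Vb(x)\ge C\dl(x)^{-\al}$ and Cauchy--Schwarz to obtain $\Mb(x)\ge C\|x\|_{H^1}^{2-\al}$, and then deduces $\mbb(\ro)>0$ a posteriori from the existence of a (necessarily nonconstant) minimizer. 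You instead pass through the Jacobi--Maupertuis length via $\Mb(x)\ge\ey\,\ell(x)^2$, use completeness of the Jacobi metric at infinity (divergence of $\int^\infty r^{-\al/2}dr$ for $\al<2$) to confine the minimizing sequence to a ball, and recover the kinetic bound from $\int\Vb\ge c$ there; the same length inequality, analyzed case by case in $\rho$ and $|\phi_+-\phi_-|$, gives you a uniform positive lower bound for $\Mb$ on all of $\Gmr$, which is a slightly stronger and more quantitative form of positivity than the paper records. What the paper's route buys is a genuine coercivity inequality on the whole class (reusable in Lemma \ref{ro6} verbatim); what yours buys is independence from the noncentral-condition machinery of \cite{Ch03} and explicit geometric lower bounds. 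Two points you should make explicit if you write this up: (i) in the case $\rho>0$, $|\phi_+-\phi_-|=2\pi$, the claim that every $x\in\Gmr$ winds once around the origin uses that elements of the weak closure $\Gm_T(\xb)$ with $\min_t|x|=\rho>0$ are uniform limits of curves in $\Gm^*_T(\xb)$ eventually avoiding the origin, so the total argument variation is inherited; and (ii) the positivity of the Jacobi distance $d(A,B)$ for $A\ne B$ requires the same dichotomy you use for confinement (paths staying in a bounded region versus paths escaping to infinity), since the conformal factor degenerates at infinity.
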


\begin{proof}
	Once we showed $\mc{M}(\ro)$ is nonempty, it is obvious that $\mbb(\ro) >0$.
	
	It is well known that $\Mb$ is weakly lower semicontinuous with respect to the $H^1$ norm. By the standard argument in calculus of variations, it is enough to show that $\Mb$ is coercive on $\Gm(\xb;\ro).$
	
	For any $x \in \Gm(\xb; \ro)$, we define
	$$ \dl(x) := \max \{ |x(t_0) - x(t_1)|: t_0, t_1 \in [-T,T] \}.$$
	Let $\phi= \frac{\phi_- + \phi_+}{2}$, then for any $x \in \Gm(\xb; \ro)$ there is a $t_{\phi} \in (-T, T]$, such that 
	$$ \text{Arg}(x(t_{\phi})) = \phi \text{ or } |x(t_{\phi})| = 0. $$
	Simply using the law of sines and \eqref{nonzero}, we find there is a $\nu \in (0,2]$ independent of $x$, such that 
	$$ \langle x(0), x(t_{\phi}) \rangle \le (1 - \nu) |x(0)| |x(t_{\phi})|. $$
	This is the so called \emph{noncentral} condition introduced by Chen in \cite{Ch03}. In proposition $2$, \cite{Ch03}, he also showed that once the noncentral condition is satisfied, then there is a constant $C >0$, such that 
	\begin{equation}
	\label{noncentral} |x(t)| \le |x(-T)| + \dl(x) \le C \dl(x), \quad \forall x \in \Gm(\xb; \ro), t \in [-T, T]. 
	\end{equation}
	On the other hand by Cauchy-Schwarz inequality
	\begin{equation}
	\label{cs} \dl^2(x) \le ( \int_{-T}^T |\dot{x}(t)| \,dt )^2 \le 2T \int_{-T}^T |\dot{x}(t)|^2 \,dt. 
	\end{equation}
	Therefore 
	$$ \| x \|^2_{H^1} = \int_{-T}^T |x(t)|^2 \,dt + \int_{-T}^T |\dot{x}(t)|^2 \,dt \le C_1 \int_{-T}^{T} |\dot{x}(t)|^2 \,dt. $$
	By \eqref{noncentral},
	$$ \Vb(x(t)) = \frac{m_1}{\al |x(t)|^{\al}} \ge C_2 \dl^{-\al}(x), $$
	combining this with \eqref{cs}, we get 
	
	$$ \int_{-T}^{T} \Vb(x(t)) \,dt \ge C_3 (\int_{-T}^T |\dot{x}(t)|^2 \,dt)^{-\frac{\al}{2}}. $$
	Hence
	$$ \Mb(x) = \int_{-T}^T \ey |\dot{x}(t)|^2 \,dt \int_{-T}^T \Vb(x(t)) \,dt \ge C_4(\int_{-T}^T |\dot{x}(t)|^2 \,dt)^{\frac{2-\al}{2}} \ge C_5 \|x\|^{2 - \al}_{H^1}. $$
	Therefore when $\al \in [1, 2)$, $\Mb$ is coercive on $\Gm(\xb; \ro).$ 
\end{proof}

\begin{lm} \label{ro6}
	There is at least one $x \in \Gm(\xb; \ro, \rob)$ with $\Mb(x) = \mbb(\ro, \rob)>0 $.
\end{lm}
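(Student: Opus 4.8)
The plan is to produce the minimizer as a weak $H^1$ limit of a minimizing sequence, following the scheme already used for Lemma \ref{ro4}; the only genuinely new ingredient is that the constraint now fixes $\min_t |x(t)|$ to lie in the closed interval $[\ro, \rob]$ rather than at a single value. First I would note that the coercivity estimate proved in Lemma \ref{ro4} is uniform in the minimal modulus: the noncentral condition \eqref{noncentral} and the chain of inequalities culminating in $\Mb(x) \ge C_5 \|x\|_{H^1}^{2-\al}$ use only the argument constraints $\text{Arg}(x(\pm T)) = \phi_{\pm}$ together with \eqref{nonzero}, and none of the constants involved depend on $\min_t |x(t)|$. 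Consequently $\Mb$ is coercive on all of $\Gm(\xb; \ro, \rob) = \bigcup_{\rho \in [\ro, \rob]} \Gm(\xb; \rho)$.

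Next, let $\{x_n\} \subset \Gm(\xb; \ro, \rob)$ be a minimizing sequence for $\mbb(\ro, \rob)$. Coercivity makes $\{x_n\}$ bounded in $\h([-T,T], \cc)$, so after passing to a subsequence it converges weakly in $H^1$ and, via the compact embedding $H^1 \hookrightarrow C^0$, uniformly on $[-T,T]$ to some $x$. Because $\Gm(\xb; \ro, \rob) \subset \Gm(\xb)$ and $\Gm(\xb)$ is weakly closed by definition, $x \in \Gm(\xb)$; in particular the endpoint and argument conditions pass to the limit.

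The step I expect to require the most care, and which is the one new point over Lemma \ref{ro4}, is verifying that the constraint itself survives the limit, namely $\min_t |x(t)| \in [\ro, \rob]$. This holds because $x \mapsto \min_{t \in [-T,T]} |x(t)|$ is continuous for uniform convergence, since $|\min_t |x_n(t)| - \min_t |x(t)|| \le \sup_t \bigl| |x_n(t)| - |x(t)| \bigr| \to 0$; as each $\min_t |x_n(t)| \in [\ro, \rob]$ and the interval is closed, the limit stays in $[\ro, \rob]$, so $x \in \Gm(\xb; \ro, \rob)$. The weak lower semicontinuity of $\Mb$ recalled in the proof of Lemma \ref{ro4} then gives $\Mb(x) \le \liminf_n \Mb(x_n) = \mbb(\ro, \rob)$, while $x \in \Gm(\xb; \ro, \rob)$ forces the reverse inequality; hence $\Mb(x) = \mbb(\ro, \rob)$.

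It remains to see $\mbb(\ro, \rob) > 0$, and I would read this off the minimizer itself. Since $\text{Arg}(x(-T)) = \phi_-$ and $\text{Arg}(x(T)) = \phi_+$ with $0 < |\phi_+ - \phi_-|$ by \eqref{nonzero}, the curve $x$ is non-constant, so $\int_{-T}^T |\dot{x}|^2 > 0$. The value $\Mb(x) = \mbb(\ro, \rob)$ is finite, as collision-free competitors with minimal modulus in $[\ro, \rob]$ exist, which forces $\int_{-T}^T \vb(x) < \infty$, and this integral is strictly positive because $\vb > 0$ off the origin. Therefore $\Mb(x) > 0$, and since $x$ realizes the infimum, $\mbb(\ro, \rob) = \Mb(x) > 0$.
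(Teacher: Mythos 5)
Your proof is correct and is essentially the argument the paper intends: the paper's own proof of this lemma consists of the single remark that it is ``exactly the same as Lemma \ref{ro4}'', i.e.\ the direct method with coercivity (which, as you observe, is uniform in the minimal modulus because the estimate only uses the endpoint argument constraints and the noncentral condition) plus weak lower semicontinuity of $\Mb$. Your explicit check that the constraint $\min_{t}|x(t)|\in[\ro,\rob]$ survives the uniform limit is the one detail the paper leaves implicit, and it is the right thing to verify.
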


The proof of the above lemma is exactly the same as Lemma \ref{ro4}. We will not repeat it here. Notice that even with Lemma \ref{ro6}, $\mc{M}(\ro, \rob)$ may still be empty. 
	
\begin{lm} \label{ro5}
 $\xb|_{[-T,T]}$ is the only minimizer of $\Mb$ in $\Gm(\xb; 0)$, i.e.,
 $$ \mc{M}(0) = \{ \xb|_{[-T,T]} \}. $$
\end{lm}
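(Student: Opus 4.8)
The plan is to bound $\Mb$ from below by the square of a reparametrization-invariant length and then to analyze when the two resulting inequalities are simultaneously equalities. I would introduce the zero-energy Jacobi length
$$ \ell(x) := \int_{-T}^{T} \sqrt{2\vb(x(t))}\,|\dot{x}(t)|\,dt. $$
Applying the Cauchy--Schwarz inequality to $\sqrt{\vb(x)}$ and $|\dot{x}|$ gives $\ell(x)^2 \le 4\Mb(x)$, with equality if and only if $|\dot{x}(t)|^2 = \kappa\,\vb(x(t))$ a.e. for some constant $\kappa>0$, which is exactly the zero-energy relation of Theorem \ref{thm mf}. Thus the minimization splits into a geometric step (minimize the length $\ell$, which is invariant under reparametrization) and a parametrization step (saturate Cauchy--Schwarz). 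Since $\xb$ is a zero-energy solution away from the collision it already satisfies $|\dot{\xb}|^2 = 2\vb(\xb)$, so $\ell(\xb)^2 = 4\Mb(\xb)$; hence it suffices to show that $\ell$ is minimized over $\Gm(\xb;0)$, uniquely by the geometric image of $\xb$.

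For the geometric step I would exploit that $\vb$ is radial and strictly decreasing in $r=|x|$. Writing $\sqrt{2\vb(x)} = \sqrt{2m_1/\al}\,r^{-\al/2}$, using $|\dot{x}|\ge|\dot r|$, and introducing the primitive $G(\rho):=\int_0^\rho s^{-\al/2}\,ds$ (finite precisely because $\al<2$), one gets
$$ \ell(x) \ge \sqrt{2m_1/\al}\int_{-T}^{T}\bigl|\tfrac{d}{dt}G(r(t))\bigr|\,dt \ge \sqrt{2m_1/\al}\cdot 2\,G(R), $$
where $R:=|\xb(\pm T)| = (\mu T)^{\power}$; the final inequality is a total-variation bound reflecting that every $x\in\Gm(\xb;0)$ starts and ends at radius $R$ and attains radius $0$. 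The right-hand side equals $\ell(\xb)$, so $\xb$ minimizes $\ell$, and combined with the first paragraph this yields $\Mb(\xb)=\mbb(0)$.

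For uniqueness, suppose $y\in\mc{M}(0)$; then $y$ must saturate both inequalities. Equality in the length bound forces $|\dot y|=|\dot r|$ a.e. and $r(t)$ to be monotone on each side of the collision, i.e. $y$ runs radially and monotonically inward along the ray $\phi_-$ and outward along the ray $\phi_+$, with a single collision time $t^*$. Equality in Cauchy--Schwarz then forces $|\dot y|^2=\kappa\,\vb(y)$, which integrates to $r(t)^{(\al+2)/2}$ being affine in $t$ on each arc; imposing $r(\pm T)=R$ and $r(t^*)=0$ pins down $t^*=0$ and fixes the constant to be $\mu$, so $y=\xb$. I expect the rigidity step to be the main obstacle: ruling out non-radial or non-monotone excursions of equal length, and, since the competitors a priori live only in the weak $H^1$-closure $\Gm_T(\xb)$, checking that such a minimizer has enough regularity for the pointwise equality conditions and the total-variation computation to be valid.
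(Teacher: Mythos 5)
Your argument is exactly the one the paper intends: the paper's proof of Lemma \ref{ro5} is only the one-line remark that it follows from ``a straight forward computation and a similar argument used by Gordon in \cite{Go77}'', and Gordon's argument is precisely your Jacobi-length/Cauchy--Schwarz lower bound $\ell(x)^2\le 4\Mb(x)$ combined with the radial total-variation estimate via $G(\rho)=\int_0^\rho s^{-\al/2}\,ds$ and the analysis of the two equality cases. The rigidity points you flag at the end are real but routine --- in particular $t^*=0$ is forced because the Cauchy--Schwarz proportionality constant $\kappa$ is a single constant on all of $[-T,T]$, so the affine function $r^{(\al+2)/2}$ has slopes of equal magnitude on the two arcs --- so the proposal is correct and takes essentially the same route as the paper.
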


This lemma follows from a straight forward computation and a similar argument used by Gordon in \cite{Go77}.

\begin{lm} \label{ro1}
 $$ \limsup_{\ro \to 0^+} \mbb(\ro) \le \mbb(0).$$
\end{lm}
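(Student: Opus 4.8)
The plan is to prove the $\limsup$ bound by exhibiting, for every small $\ro>0$, an explicit comparison curve $\xr\in\Gm_T(\xb;\ro)$ whose Maupertuis' value converges to $\Mb(\xb)$ as $\ro\to 0^+$. Since $\mbb(\ro)\le\Mb(\xr)$ by the definition of $\mbb(\ro)$, and since Lemma \ref{ro5} identifies $\mbb(0)=\Mb(\xb)$ (the only minimizer in $\Gm_T(\xb;0)$ being $\xb$ itself), this immediately gives $\limsup_{\ro\to 0^+}\mbb(\ro)\le\Mb(\xb)=\mbb(0)$.

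To construct $\xr$ I start from $\xb$ and replace the portion lying inside the disk of radius $\ro$ by a circular arc of radius $\ro$. Because $|\xb(t)|=(\mu|t|)^{\power}$, one has $|\xb(t)|=\ro$ exactly at $t=\pm\tr$, where $\tr:=\mu^{-1}\ro^{(\al+2)/2}$, and $|\xb(t)|\ge\ro$ for $|t|\ge\tr$. On $[-\tr,\tr]$ I interpolate the endpoints $\ro e^{i\phi_-}$ and $\ro e^{i\phi_+}$ by the constant-angular-speed arc, i.e.
$$
\xr(t)=
\begin{cases}
\xb(t), & \tr\le|t|\le T,\\
\ro\,e^{i\psi(t)}, & |t|\le\tr,
\end{cases}
$$
with $\psi$ affine and $\psi(\pm\tr)=\phi_{\pm}$. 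By construction $\xr$ agrees with $\xb$ near $t=\pm T$ (so it satisfies the endpoint and argument conditions defining $\Gm^*_T(\xb)$), stays away from the origin, and has $\min_{[-T,T]}|\xr|=\ro$; hence $\xr\in\Gm^*_T(\xb)\cap\Gm_T(\xb;\ro)$. The condition \eqref{nonzero} guarantees the interpolating arc is an honest $H^1$ curve.

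It then remains to check $\Mb(\xr)\to\Mb(\xb)$. I would split each of the two factors of $\Mb$ into its contribution over $\{|t|\ge\tr\}$, where $\xr\equiv\xb$, and its contribution over $[-\tr,\tr]$. On $\{|t|\ge\tr\}$ the integrands $\ey|\dot{\xb}|^2$ and $\vb(\xb)$ both behave like $|t|^{-2\al/(2+\al)}$ near $0$ and are therefore nonnegative and integrable on $[-T,T]$ precisely because $\al<2$; consequently these partial integrals converge to the full $\xb$-integrals as $\tr\to 0$. On the arc a direct computation gives
$$
\int_{-\tr}^{\tr}\ey|\dot{\xr}|^2\,dt=\frac{\mu(\phi_+-\phi_-)^2}{4}\,\ro^{(2-\al)/2},
\qquad
\int_{-\tr}^{\tr}\vb(\xr)\,dt=\frac{2m_1}{\al\mu}\,\ro^{(2-\al)/2},
$$
both of which vanish as $\ro\to 0^+$, again exactly because $\al<2$. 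Thus both factors of $\Mb(\xr)$ tend to the corresponding factors of $\Mb(\xb)$, whence $\Mb(\xr)\to\Mb(\xb)=\mbb(0)$ and the lemma follows.

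The computation on the arc is routine; the only point requiring attention is that the argument genuinely uses $\al<2$ twice — for the integrability of the $\xb$-integrals and for the decay of the arc contributions — so the estimate is specific to the admissible range $\al\in[1,2)$. I do not expect a serious obstacle beyond bookkeeping the two power-law exponents.
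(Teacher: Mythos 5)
Your proposal is correct and follows essentially the same route as the paper: the paper's proof of Lemma \ref{ro1} uses exactly the same comparison curve $\xr$ (equal to $\xb$ outside $[-\tr,\tr]$ with $\tr=\mu^{-1}\ro^{(2+\al)/2}$, and a constant-angular-speed circular arc of radius $\ro$ inside), shows $\Mb(\xr)-\Mb(\xb)=O(\ro^{(2-\al)/2})$, and invokes Lemma \ref{ro5} to identify $\mbb(0)=\Mb(\xb)$. Your write-up is if anything more explicit about the arc integrals and about where $\al<2$ is used, but there is no substantive difference in approach.
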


\begin{proof}
	Without loss of generality let's assume $\phi_-=0$ and $\phi_+ = \phi.$ For any $\ro > 0$ small enough, we define 
	$$ \xr(t) = 
	   \begin{cases}
		\xb(t), & \text{ if } t \in \Om := [-T, T] \setminus [-\tr, \tr], \\
		\ro e^{i \om t}, & \text{ if } t \in [-\tr, \tr],
	\end{cases}
	$$
where $\tr = \mu^{-1} \ro^{\frac{2+\al}{2}}$ and $\om = \phi \mu \ro^{-\frac{2 + \al}{2}}. $ A straight forward computation shows that 

\begin{align*}
\Mb(\xr) - \Mb(\xb) & = \int_{\Om} \ey |\dot{\xb}|^2 \int_{-\tr}^{\tr} [\Vb(\xr) - \Vb(\xb)] + \int_{\Om} \Vb(\xb) \int_{-\tr}^{\tr} [\ey |\dot{x}_{\ro}|^2 -\ey |\dot{\xb}|^2]  \\
& \quad + \int_{-\tr}^{\tr} \ey |\dot{x}_{\ro}|^2 \int_{-\tr}^{\tr} \Vb(\xr) - \int_{-\tr}^{\tr} \ey |\dot{x}_{\ro}|^2 \int_{-\tr}^{\tr} \Vb(\xb) \\
& \sim O(\ro^{\frac{2-\al}{2}}). 
\end{align*}
This proves the lemma, because $\Mb(\xb) = \mbb(0)$ according to Lemma \ref{ro5}. 
\end{proof}

The above lemmas guarantees the existence of the obstacle minimizers. In the following we will get more information about these minimizers.

For any $\ro \ge 0$ and $x \in \Gmr$, we set
  $$T_{\ro}(x):= \{ t \in [-T, T]: |x(t)| = \ro \}.$$

\begin{lm}
  \label{restricted mini}
  If $\ro >0$ and $x \in \mc{M}(\ro),$ the following results hold.
  
  \begin{enumerate}
   \item For any $(a,b) \subset [-T, T] \setminus T_{\ro}(x)$, $x|_{(a,b)}$ is $C^2$ and a classic solution of the following equation
   \begin{equation} \label{mini1} \om^2 \ddot{x}(t) = \nabla \Vb(x(t)), \text{ where } \om^2 = \frac{\int_{-T}^{T} \Vb(x(t) \,dt}{\int_{-T}^{T} \frac{1}{2} |\dot{x}(t)|^2 \,dt}. \end{equation}
   \item There exist $ t^- \le t^+$, such that $T_{\ro}(x) = [t^-, t^+]$ and
   \begin{align*}
    |x(t)| > \ro, \quad  & \forall t \in [-T, t^-) \cup (t^+, T], \\
    |x(t)| = \ro, \quad & \forall t \in [t^-, t^+].
   \end{align*}
   \item The following energy identity holds for any $t \in [-T, T],$
   \begin{equation} \label{mini2} \frac{1}{2} |\dot{x}(t)|^2 - \frac{\Vb(x(t))}{\om^2} = 0. \end{equation}
   \item Set $x(t) = r(t) e^{i\tht(t)}$ in polar coordinates, then one of the following three situation must occur:
   \begin{enumerate}
    \item $t^- < t^+$ and $x \in C^1([-T, T], \cc)$;
    \item $t^- = t^+$ and $x \in C^1([-T, T], \cc)$;
    \item $t^- = t^+$ and $\dot{x}(t)$ doesn't exist at $t = t^-= t^+$.
   \end{enumerate}
   \item If $t^- < t^+$ then $\tht|_{(t^-, t^+)}$ is $C^2$, strictly monotone and a classic solution of the following equation
   $$ \ddot{\tht}(t) = \frac{1}{\om^2} \cdot \frac{1}{\ro} \langle \nabla \Vb(\ro e^{i \tht(t)}), ie^{i \tht(t)} \rangle. $$
   \item $\dot{r}(t)> 0$, if $t \in (t^+, T]$ and $\dot{r}(t)< 0.$ if $ t \in [-T, t^-).$
    \end{enumerate}
\end{lm}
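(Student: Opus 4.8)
The plan is to treat $x \in \mc{M}(\ro)$ (which exists by Lemma \ref{ro4}) as the solution of an obstacle problem for $\Mb$ with obstacle $\{ z \in \cc : |z| \ge \ro \}$ and coincidence set $T_{\ro}(x)$. On the open \emph{free set} $[-T,T] \setminus T_{\ro}(x)$ the constraint is inactive, so $x$ is an unconstrained critical point of $\Mb$ relative to fixed endpoints on each free subinterval; Theorem \ref{thm mf} then yields the Maupertuis equation \eqref{mini1} together with interior $C^2$ (in fact smooth) regularity, which is part (1). Throughout, $\om^2$ is the global constant \eqref{omega}, the same for every free arc.

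To obtain the energy identity \eqref{mini2} of part (3) I would use inner variations $x_s(t) = x(t + s\,\zeta(t))$ with $\zeta \in C^\infty_c((-T,T))$: such a reparametrization leaves the image of $x$, hence the constraint $|x|\ge\ro$ and the minimum value $\ro$, unchanged, so $x_s \in \Gm_T(\xb;\ro)$ and $\tfrac{d}{ds}\Mb(x_s)|_{s=0}=0$. The resulting identity forces $\ey|\dot x|^2 - \Vb(x)/\om^2$ to be constant almost everywhere, and the constant vanishes because $\int_{-T}^T \big(\ey|\dot x|^2 - \Vb(x)/\om^2\big)\,dt = 0$ by the very definition of $\om^2$. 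Since $\nabla\Vb$ is radial, on each free arc the angular momentum $L := r^2\dot\tht$ is conserved, and combining this with \eqref{mini2} gives
$$ \dot r^2 = \frac{2m_1}{\al\,\om^2\,r^\al} - \frac{L^2}{r^2} = \frac{2m_1}{\al\,\om^2\,r^2}\big( r^{2-\al} - r_*^{2-\al}\big), \qquad r_*^{2-\al} := \frac{\al\,\om^2 L^2}{2m_1}. $$
On the coincidence set $r \equiv \ro$, the radial constraint reaction contributes nothing to the tangential component $\langle\,\cdot\,,ie^{i\tht}\rangle$ of \eqref{mini1}, which reduces exactly to the $\tht$-equation of part (5); strict monotonicity of $\tht$ follows since \eqref{mini2} gives $\dot\tht^2 = 2\Vb/(\om^2\ro^2) > 0$. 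The sign of $\dot r$ on the free arcs adjacent to the contact set is then part (6).

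The crux is the connectedness of $T_{\ro}(x)$ claimed in part (2), and this is exactly where $\al < 2$ enters. At any contact instant $\dot r^2 \ge 0$, so the displayed relation forces $r_* \le \ro$ (here $2-\al>0$ is essential, since $r\mapsto r^{2-\al}$ must be increasing). Consequently, on any free arc one has $r > \ro \ge r_*$, whence $\dot r^2 > 0$ strictly and $r$ is strictly monotone there, returning to the value $\ro$ only at the endpoints of that arc. If $T_{\ro}(x)$ had an interior gap, that is, a free subinterval bounded on both sides by contact, then $r$ would attain an interior maximum exceeding $\ro$, producing an interior zero of $\dot r$ with $r>\ro$, a contradiction. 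Hence $\{ r = \ro \}$ is connected; since $|x(\pm T)| = |\xb(\pm T)| > \ro$, it is a closed interval $[t^-,t^+] \subset (-T,T)$, which is part (2).

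Finally, the trichotomy of part (4) would come from the first-order variational inequality of the obstacle problem at the endpoints $t^\pm$: the admissible one-sided variations force either a tangential junction, $\dot r(t^\pm)=0$ with $x \in C^1$ (cases (a) and (b), according to whether $t^-<t^+$ or $t^-=t^+$), or, possible only when the contact reduces to a single instant, an elastic reflection $\dot r \mapsto -\dot r$ with $L$ preserved, which creates the corner of case (c); a persistent (interval) contact cannot be entered through a corner, which excludes any fourth case. I expect the two genuinely delicate points to be, first, the connectedness argument of part (2), whose whole force rests on the sign computation above and hence on $\al<2$, and second, the justification of the reflection law and the tangential-junction alternative at $t^\pm$ from the one-sided first variation; the remaining ODE regularity and the conservation computations are comparatively mechanical.
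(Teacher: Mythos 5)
Your proposal is essentially correct, but it takes a genuinely different route from the paper: the paper does not prove this lemma at all, deferring parts (1)--(3) and (5) to Lemma 4.30 of \cite{ST12}, part (4) to Proposition 2.6 of \cite{BTV14}, and giving only a one-line remark for (6), whereas you supply a direct argument. Your treatment of (1), (3), (5) and (6) is the standard one and is sound: variations supported in the open free set are two-sided admissible (they do not disturb $\min|x|=\ro$), inner variations $x(t+s\zeta(t))$ preserve the image and hence the constraint and yield the zero-energy identity with the global $\om^2$, and tangential variations give the $\tht$-equation on the contact set. Your proof of (2) is the nicest part: the observation that $\dot r^2 \ge 0$ at the inner endpoints of a putative gap arc forces $r_* \le \ro$, hence $\dot r^2>0$ strictly on the gap (using $2-\al>0$), hence strict monotonicity of $r$ there, is a clean, self-contained way to get connectedness of $T_\ro(x)$ that the paper never writes down; it correctly isolates where $\al<2$ is used, and it also delivers (6) for free on the two outer arcs.

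The one thin spot is part (4). You assert that the only admissible junctions at $t^{\pm}$ are a tangential one or an elastic reflection, and that a persistent contact cannot be entered through a corner, but you do not derive the corner conditions that make this dichotomy true. The cleanest repair uses machinery you already have: the tangential variations $x \mapsto e^{i\ep\psi}x$ with $\psi \in C^{\infty}_c((-T,T))$ preserve $|x|$ and the endpoint data \emph{globally}, not just on the contact set, so the first variation forces the angular momentum $r^2\dot\tht$ to be constant a.e.\ on all of $[-T,T]$ (this is in effect the paper's Lemma \ref{am4}, obtained there only after $C^1$ regularity is assumed). Combined with the energy identity \eqref{mini2}, this makes $\dot r^2 = \frac{2m_1}{\al\om^2 r^{\al}} - \frac{J^2}{r^2}$ a continuous function of $t$; if $t^-<t^+$ then $\dot r \equiv 0$ on $(t^-,t^+)$, so the one-sided limits of $\dot r$ from the free arcs vanish as well, the tangential component $J/r$ being automatically continuous, and $x$ is $C^1$. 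With that supplement your argument is complete and, unlike the paper's, self-contained.
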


\begin{proof}
 The proof of ($4$) can be found in the proof of Proposition $2.6$ in \cite{BTV14}. The proofs of the rest results can be found in the proof of Lemma $4.30$ in \cite{ST12}.
 
 The last result, $(6)$ is easy to see, once the reader notice that once other results are true. Then $x(t), t \in [-T, t^-) \cup (t^+, T]$ is a parabolic solution (zero energy) of the following Kepler-type problem
 $$ \ddot{x}(t) = - \frac{m_1 x(t)}{\om^2 |x(t)|^{\al +2} } $$
 with $r(t^{\pm}) = \min \{|x(t)|: t \in [-T, T] \}. $

\end{proof}

In our proof we need the obstacle minimizers to have enough regularity, i.e., at least $C^1$. However the above lemma does not guarantees that. On the other hand a minimizer contained in $\mc{M}(\ro, \rob)$ is $C^1.$

\begin{lm}
	\label{ro2}  If $x \in \mc{M}(\ro, \rob)$, then $x \in C^1([-T, T], \cc)$.
\end{lm}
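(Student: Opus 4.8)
The plan is to exploit the one feature that distinguishes $\mc{M}(\ro,\rob)$ from $\mc{M}(\ro)$: a minimizer $x\in\mc{M}(\ro,\rob)$ satisfies $\ro^*:=\min_{t\in[-T,T]}|x(t)|\in[\ro,\rob)$, so the \emph{upper} obstacle is never active and there is always room to push the curve outward while staying admissible. First I would dispose of the case $\ro^*>\ro$. Here neither obstacle is active; since $x\mapsto\min_t|x|$ is continuous on $H^1$ (using $H^1\hookrightarrow C^0$), the set $\{\,\ro<\min_t|x|<\rob\,\}$ is open, and $x$ is an unconstrained local minimizer of $\Mb$ on $\Gm_T(\xb)$. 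As $\Mb(x)=\mbb(\ro,\rob)>0$ by Lemma \ref{ro6}, Theorem \ref{thm mf} (with $h=0$) applies and shows $x$ is a classical solution away from the origin; since $\ro^*>0$ there is no collision, so $x\in C^1$.

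It remains to treat $\ro^*=\ro$. Because $\Gm(\xb;\ro)\subset\Gm(\xb;\ro,\rob)$ and $x\in\Gm(\xb;\ro)$, minimality over the larger class forces $\Mb(x)=\mbb(\ro)$, i.e. $x\in\mc{M}(\ro)$, so the full structure of Lemma \ref{restricted mini} is available: there are $t^-\le t^+$ with $T_\ro(x)=[t^-,t^+]$, the energy identity \eqref{mini2} holds, and on each component of $[-T,T]\setminus[t^-,t^+]$ the curve solves \eqref{mini1}. In the first two alternatives of part $(4)$ of that lemma $x$ is already $C^1$, so the whole point is to exclude the third alternative: $t^-=t^+=:t_0$ with $\dot x$ failing to exist at $t_0$.

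Suppose this corner occurs. Write $A=\int_{-T}^{T}\ey|\dot x|^2$ and $B=\int_{-T}^{T}\Vb(x)$, so $\Mb=AB$ and $\om^2=B/A$. For $\psi\in H^1_0((-T,T),\cc)$ supported near $t_0$, integrating by parts on the two smooth arcs and using the Euler--Lagrange equation $\om^2\ddot x=\nabla\Vb(x)$ of \eqref{mini1} — which makes the bulk integrand $-B\ddot x+A\nabla\Vb$ vanish identically — the first variation collapses to the single corner term
\[ \left.\frac{d}{d\ep}\right|_{\ep=0}\Mb(x+\ep\psi)=B\,\langle\dot x(t_0^-)-\dot x(t_0^+),\,\psi(t_0)\rangle, \]
with $B>0$. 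Since $|x(t)|^2$ attains an interior minimum at $t_0$ and is $C^2$ on each side, the one-sided radial derivatives obey $\langle e,\dot x(t_0^-)\rangle\le0\le\langle e,\dot x(t_0^+)\rangle$, where $e=x(t_0)/\ro$ is the unit radial vector; hence the jump $J:=\dot x(t_0^-)-\dot x(t_0^+)$ has radial component $\langle e,J\rangle\le0$.

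To reach a contradiction I would now choose an admissible probe. Because $\ro^*=\ro<\rob$ and $[t^-,t^+]$ has degenerated to the single interior point $t_0$ (so $|x|\ge\ro+c$ away from $t_0$), a small outward or tangential push keeps $\min_t|x+\ep\psi|$ inside $[\ro,\rob]$ for $\ep$ small, giving admissible competitors; minimality then forces the corner term to be $\ge0$ for all such $\psi$. If $\langle e,J\rangle<0$ take $\psi(t_0)=e$; if $\langle e,J\rangle=0$ then $J$ is a nonzero tangential vector, and I take $\psi(t_0)=\mp ie$ with the sign opposite to $J$. In either case the corner term is strictly negative, contradicting minimality; hence $J=0$ and $x\in C^1$. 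The step I expect to be the main obstacle is precisely this last admissibility bookkeeping: one must verify that the probing variations — especially the purely tangential one, which to first order leaves $|x(t_0)|$ unchanged — actually remain in $\Gm(\xb;\ro,\rob)$ for small $\ep$. This is exactly where the slack $\ro<\rob$ and the degeneracy of the contact set to one interior point are indispensable.
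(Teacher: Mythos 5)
Your proof is correct and takes essentially the same route as the paper's: the same dichotomy on whether the lower obstacle is active, the same reduction via Lemma \ref{restricted mini} to excluding alternative $(c)$ of part $(4)$, and the same mechanism of a local perturbation made admissible by the slack $\ro<\rob$ --- the paper merely asserts that a perturbation ``in the direction away from the origin'' works and defers the details to Lemma $4.30$ of \cite{ST12}, whereas you carry out the first variation explicitly. The one step you flagged, admissibility of the tangential probe, does go through, and it is worth recording why, since the radial probe alone only yields $\langle e,J\rangle\ge 0$, hence $\langle e,J\rangle=0$; by the energy identity \eqref{mini2} this forces $\dot x(t_0^{\pm})=v_{\pm}\,ie$ with $|v_-|=|v_+|\ne 0$, so the only surviving corner is the bounce-back $v_-=-v_+$. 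In that configuration put $\sigma=\mathrm{sign}(v_+)$ and $w(t):=\sigma\langle x(t),ie\rangle$; then $w(t_0)=0$ while $\dot w(t_0^{+})=|v_+|$ and $\dot w(t_0^{-})=-|v_+|$, so $w$ has a V-shaped minimum at $t_0$ and $w(t)\ge \tfrac12|v_+|\,|t-t_0|\ge 0$ nearby. Consequently
$$|x+\ep\chi\,\sigma ie|^2=|x|^2+2\ep\chi w+\ep^2\chi^2\ge \ro^2$$
near $t_0$, while outside a small neighborhood of $t_0$ one has $|x|\ge\ro+c$ and the perturbation is $O(\ep)$; shrinking the support of $\chi$ keeps the minimum of the perturbed curve below $\rob$ as well. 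Since this $\sigma$ is precisely the sign for which $\langle J,\sigma ie\rangle=-2|v_+|<0$, the corner term is strictly negative along an admissible one-sided variation, which completes your contradiction. (Your naive worry --- that a tangential push leaves $|x(t_0)|$ unchanged to first order --- is resolved exactly because at nearby times $t\ne t_0$ the push increases $|x(t)|$ at order $\ep|t-t_0|$, which dominates nothing negative: all three terms above are nonnegative.)
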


\begin{proof}
	First if $x$ satisfies 
	$$ \min \{ |x(t)| : \text{ for any } t \in [-T,T] \} > \ro,$$
	then $x$ is in the interior of the set $\Gm(\xb; \ro, \rob)$ and it is a classical solution of the Kepler-type problem, so $x(t)$ must be $C^1$. 
	
	If  
	$$ \min \{ |x(t)| : \text{ for any } t \in [-T,T] \} = \ro,$$
	then $x \in \mc{M}(\ro)$ and it must satisfies all the properties in Lemma \ref{restricted mini}. Therefore we only need to show the case $(c)$ in Lemma \ref{restricted mini} $(4)$ will not happen. 
	
	Assume this is the case, let $t_0 \in (-T, T)$ be the only moment that $\dot{x}$ does not exist, by a small perturbation of $x$ near $x(t_0)$ in the direction away from the origin we can get a new with strictly smaller Maupertuis' functional and still contained in $\Gm(\ro, \rob).$ A detailed argument can be found in the proof of Lemma $4.30$ \cite{ST12}.
	
\end{proof}

\begin{lm} \label{ro3} For any $0 < |\phi_+ - \phi_-| \le 2 \pi$,
\begin{enumerate}
 \item if $\al \in (1, 2)$, then there is a $\ro^* > 0$, small enough, such that $\mc{M}(\ro, \rob) = \emptyset,$ for any $0< \ro < \rob \le \ro^*;$
 \item if $\al = 1$ and $|\phi_+- \phi_-| < 2 \pi$, then there is a $\ro^* > 0$, small enough, such that $\mc{M}(\ro, \rob) = \emptyset$ for any $0< \ro < \rob \le \ro^*.$
\end{enumerate}

\end{lm}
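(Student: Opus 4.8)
The plan is to argue by contradiction and to reduce everything to the obstacle minimizers already analyzed in Lemma \ref{restricted mini} and Lemma \ref{ro2}. Suppose the claim fails. Then, in either case, there are sequences $\rob_n \searrow 0$ and $0 < \ro_n < \rob_n$ together with minimizers $x_n \in \mc{M}(\ro_n, \rob_n)$. By the very definition of $\mc{M}(\ro_n, \rob_n)$ in Definition \ref{notations}, each $x_n$ has $\ro'_n := \min_{[-T,T]} |x_n| < \rob_n$, hence $\ro'_n \to 0$, and $x_n \in \mc{M}(\ro'_n)$. By Lemma \ref{ro2} every $x_n$ is $C^1$, so case $(c)$ of Lemma \ref{restricted mini}$(4)$ is excluded; combined with Lemma \ref{restricted mini}$(1)$, $(3)$ and $(6)$, each $x_n$ solves the rescaled Kepler equation \eqref{mini1} with zero energy \eqref{mini2} off its contact set $[t_n^-, t_n^+]$, with $|x_n|$ strictly decreasing before and strictly increasing after contact. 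Thus, up to the reparametrisation built into the Maupertuis formulation, $x_n$ is a genuine parabolic (zero-energy) Kepler arc joining the fixed endpoints $\xb(\pm T)$ and either grazing or running along the circle $\{|x| = \ro'_n\}$.

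Next I would blow up at the shrinking perihelion. Since the zero-energy orbits of $\vb(x) = m_1/(\al|x|^\al)$ and the functional $\Mb$ are covariant under the dilation $x \mapsto \lmd x$ (with $\lmd = (\ro'_n)^{-1}$) together with the induced time change, the normalised curves $w_n := (\ro'_n)^{-1} x_n$ are obstacle minimizers grazing the unit circle, carry the same endpoint arguments $\phi_{\pm}$, and have endpoints escaping to infinity along the rays $\{\arg = \phi_{\pm}\}$. The energy identity \eqref{mini2} and the bounds in Lemma \ref{restricted mini} furnish uniform $C^1_{\mathrm{loc}}$ estimates away from the contact, so after passing to a subsequence $w_n$ converges to a limit $w_\infty$ on all of $\rr$: a zero-energy Kepler arc with perihelion on the unit circle, asymptotic to the direction $\phi_-$ as $t \to -\infty$ and $\phi_+$ as $t \to +\infty$.

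The contradiction is then angular. A direct integration of the zero-energy orbit equation shows that, independently of the perihelion radius, such an orbit sweeps the universal total angle $\tfrac{2\pi}{2-\al}$ between its two asymptotic rays. When $\al \in (1,2)$ this gives $\tfrac{2\pi}{2-\al} > 2\pi \ge |\phi_+ - \phi_-|$, and when $\al = 1$ it gives $2\pi > |\phi_+ - \phi_-|$ under the strict hypothesis $|\phi_+ - \phi_-| < 2\pi$; in both regimes the turning forced by $w_\infty$ strictly exceeds the prescribed $|\phi_+ - \phi_-|$. Equivalently — and this is the concrete computation I would actually carry out, following \cite{ST12} and \cite{BTV14} — for $\ro'_n$ small enough one may lift the near-perihelion portion of $x_n$ radially outward to a minimum distance in $(\ro'_n, \rob_n)$, exactly as in the cut-and-paste estimate of Lemma \ref{ro1}, and verify that this angular slack makes $\Mb$ strictly decrease. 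This contradicts $x_n \in \mc{M}(\ro_n, \rob_n)$ and yields $\mc{M}(\ro, \rob) = \emptyset$ for all $0 < \ro < \rob \le \ro^*$. The main obstacle is precisely this last quantitative step: controlling the sign of the first variation of $\Mb$ under the outward perturbation, and accounting for the sharpness at the Newtonian borderline $\al = 1$, $|\phi_+ - \phi_-| = 2\pi$, where Gordon's computation shows the inequality degenerates and the conclusion genuinely fails.
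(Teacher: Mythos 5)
You have correctly isolated the mechanism behind the paper's proof: a zero-energy orbit of the $\al$-homogeneous Kepler problem with nonvanishing angular momentum sweeps a total angle approaching $\frac{2\pi}{2-\al}$ as its perihelion shrinks, and this is incompatible with the boundary condition $|\phi_+-\phi_-|\le 2\pi$ (resp.\ $<2\pi$ when $\al=1$). But the execution has two genuine gaps. First, the blow-up at the perihelion is both unjustified and unnecessary. You assert that $w_n=(\ro'_n)^{-1}x_n$ converges on all of $\rr$ to a complete arc asymptotic to the rays of argument $\phi_\pm$; yet the endpoints of $w_n$ escape to infinity, so identifying the asymptotic directions of the limit with $\phi_\pm$, and ruling out a degenerate limit with vanishing angular momentum (for which the swept angle is zero and no contradiction arises), requires precisely the quantitative control you are trying to bypass --- in particular a bound on the Maupertuis frequencies $\om_n$, which you never discuss. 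The paper avoids all of this by working with $x_n$ directly on $[-T,T]$: the $C^1$ regularity forces the angular momentum to be the single nonzero constant $J(x_n)=\sqrt{2m_1/(\om_n^2\al)}\,\ron^{(2-\al)/2}$ on the contact arc and on both free arcs (Lemma \ref{am4}); the zero-energy identity then expresses $\dot r_n$ as a function of $r_n$, and integrating $d\ttn/dr_n$ gives $\ttn(T)-\ttn(\tnb)=\ttn(\tna)-\ttn(-T)=\frac{2}{2-\al}\bigl(\frac{\pi}{2}-\sin^{-1}(\ron/r^*)\bigr)$, with the $\om_n$-dependence cancelling identically. Adding the nonnegative sweep on $[\tna,\tnb]$ yields $|\phi_+-\phi_-|\ge\frac{2}{2-\al}\bigl(\pi-2\sin^{-1}(\ron/r^*)\bigr)\to\frac{2\pi}{2-\al}$, the desired contradiction in both regimes.

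Second, and more seriously, you then set aside this angular contradiction in favor of an ``equivalent'' argument --- radially lifting the near-perihelion arc and showing that $\Mb$ strictly decreases --- which you explicitly acknowledge you cannot complete. That first-variation step is not in the paper and is not needed; the cut-and-paste estimate of Lemma \ref{ro1} serves a different purpose (comparing $\mbb(\ro)$ with $\mbb(0)$) and does not transfer to this setting. To repair the proposal, drop both the blow-up and the perturbation argument and carry out the explicit integration of the orbit equation for $x_n$ itself, as sketched above.
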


Let's postpone the proof of Lemma \ref{ro3} for a moment and see how we can use it to proof Theorem \ref{collisionless}.

\begin{proof}

[Theorem \ref{collisionless}]
 We will give the detailed proof for the case $\al \in (1,2)$, while the other is similar.
 
 First by the same argument as in the proof of Lemma \ref{ro4}, it is easy to see there is a $y \in \Gm(\xb)$ with $\Mb(y) = \mbb$ and $\mbb > 0$. 
 
 Assume there is a $t_0 \in [-T, T]$ with $y(t_0) = 0$, then $ \Mb(y) = \mbb(0)$ and $y \in \mc{M}(0)= \mbb$.
 
 For any $0 < \ro < \rob < \ro^*$, by Lemma \ref{ro6}, there is a $x_{\rob} \in \Gm(\xb; \ro, \rob)$ with $\Mb(x_{\rob}) = \mbb(\ro, \rob)$. However by Lemma \ref{ro3}, $x_{\rob} \notin \mc{M}(\ro, \rob).$ This means
 $$ \min \{ |x(t)|: t \in [-T, T] \} = \rob,$$
 so $ x_{\rob} \in \Gm(\xb; \rob)$ and $\Mb(x_{\rob}) = \mbb(\rob).$
 
 Meanwhile by Lemma \ref{ro4}, there is a $x_{\ro} \in \Gm(\xb; \ro)$ with $\Mb(x_{\ro}) = \mbb(\ro).$ Obviously $x_{\ro} \in \Gm(\xb; \ro, \rob)$ and by Lemma \ref{ro2}
 $$ \Mb(x_{\ro}) = \mbb(\ro) > \mbb(\ro, \rob) = \mbb(\rob) = \Mb(x_{\rob}).$$
 The above argument implies $\mbb(\ro)$ is strictly decreasing with respect to $\ro \in (0, \ro^*].$
 
 Combining this with Lemma \ref{ro1}, we have
 $$ \lim_{t \to 0^+} \mbb(\ro) = \limsup_{t \to 0^+} \mbb(\ro) \le \mbb(0) = \mbb.$$
 Hence 
 $$ \mbb(\ro^*) < \mbb,$$
 which is absurd.

\end{proof}

Now we are ready to prove Lemma \ref{ro3}.
\begin{proof}

 [Lemma \ref{ro3}]
 An contradiction argument will be used here as well. Suppose our results are not true, then there are two sequences $\{ \ron \}$ and $\{\robn\}$ with $0 < \ron < \robn$, both converge to $0$ as $n$ goes to positive infinity. Furthermore for each $n,$ there is $x_n \in \Gm(\xb; \ron),$ satisfying
 $$ \Mb(x_n) = \mbb_{\ron} = \mbb_{\ron, \robn} . $$

 Obviously each $x_n$ satisfies Lemma \ref{restricted mini}, so there are $t_n^- \le t_n^+$ such that
 $$ T_{\ron}(x_n) := \{ t \in [-T, T]: x_n(t) = \ron \} = [t_n^-, t_n^+].$$

 Even though $x_n$ is a minimizer in $\Gm(\xb; \ron)$, by Lemma \ref{restricted mini}, it is possible that $x_n$ is not $C^1$ on $[-T, T]$. However by exact the same arguments as in the proof of part $(vi),$ Lemma $4.30$ in \cite{ST12}, we know $x_n|_{[-T,T]}$ must be $C^1$.

 Write each $x_n(t)$ in polar coordinates: $x_n(t) = r_n(t) e^{i \ttn(t)}$.

 First let us focus on the time interval $ T_{\ron}(x_n)=[\tna, \tnb]$, then
 $$ x_n(t) = \ron e^{i \ttn(t)},  \quad \forall t \in [\tna, \tnb].$$
 Hence by the energy identity, for any $t \in (\tna, \tnb),$
 $$ \frac{1}{2} |\dot{x}_n|^2 = \frac{1}{2} |\ron i \dot{\tht}_n e^{i \ttn}|^2 = \frac{1}{2} \ron^2 \dot{\tht}_n^2 = \frac{\Vb(x_n)}{\om_n^2} = \frac{1}{\om_n^2} \cdot \frac{m_1}{\al |\ron|^{\al}}. $$
 Therefore
 $$ \ron^2 \dot{\tht}_n^2 = \frac{2 m_1}{\om_n^2 \al} \ron^{-\al}. $$

 By Lemma \ref{restricted mini}, $\dot{\tht}_n$ is positive for $t \in (\tna, \tnb),$  so
 $$ \dot{\tht}_n(t) = \sqrt{\frac{2m_1}{\om_n^2 \al} } \ron^{-\frac{2+\al}{2}}, \quad \forall t \in (\tna, \tnb).$$
 Therefore
 \begin{equation}
  \label{am2}  \ttn(\tnb) - \ttn(\tna) = \int_{\tna}^{\tnb} \dot{\tht}_n(t) \,dt= \sqrt{\frac{2m_1}{\om_n^2 \al} } \ron^{-\frac{2+\al}{2}} (\tnb - \tna).
 \end{equation}

 Meanwhile for $t \in (\tna, \tnb)$, the angular momentum
\begin{equation} \label{am1}
J(x_n(t)) = \ron^2 \dot{\tht}_n(t) = \sqrt{\frac{2m_1}{\om_n^2 \al} } \ron^{\frac{2-\al}{2}}.
\end{equation}

 On the other hand, if $t \in (-T, \tna)$ (or $ t \in (\tnb, T)$), $x_n(t)$ is a solution of the Kepler-type  equation \eqref{mini1}, so the angular momentum $J(x_n)$ is a constant, i.e.,
 \begin{equation}
  \label{am3} J(x_n(t)) = x_n(t) \times \dot{x}_n(t) = constant, \quad \forall t \in (-T, \tna) (\text{ or }  t \in (\tnb, T) ).
 \end{equation}
 Then we have the following result.

 \begin{lm}
  \label{am4}
  $$ J(x_n(t)) \equiv \frac{2m_1}{\om_n^2 \al} \ron^{\frac{2 - \al}{2}}, \quad \forall t \in (T, T). $$
 \end{lm}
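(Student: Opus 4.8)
The plan is to combine the piecewise conservation of angular momentum with the $C^1$ regularity of $x_n$ that has just been secured. Writing $J(x_n(t)) = x_n(t)\times\dot{x}_n(t)$, the fact that $x_n\in C^1([-T,T],\cc)$ makes $t\mapsto J(x_n(t))$ a continuous function on all of $[-T,T]$. Hence it is enough to show that $J(x_n)$ is constant on each piece of the natural decomposition of $[-T,T]$ determined by the obstacle set $T_{\ron}(x_n)=[\tna,\tnb]$, and then let continuity glue the constants together into the single value asserted.

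On the two outer arcs $(-T,\tna)$ and $(\tnb,T)$ the curve $x_n$ solves the central-force equation \eqref{mini1}, so $J(x_n)$ is constant on each of them by \eqref{am3}. When the obstacle arc $(\tna,\tnb)$ is nondegenerate, $|x_n|\equiv\ron$ there, $\dot{\tht}_n>0$ by Lemma \ref{restricted mini}(5), and \eqref{am1} shows directly that $J(x_n)\equiv\sqrt{\tfrac{2m_1}{\om_n^2\al}}\,\ron^{\frac{2-\al}{2}}$ on it. Continuity of $J(x_n)$ at the two junctions $\tna$ and $\tnb$ then forces the two outer constants to agree with this obstacle value, which proves the claim in this case. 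The matching step is exactly where the $C^1$ property is indispensable: without it the junction values could jump and the three pieces would carry unrelated constants.

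The one point requiring separate care, and the only real obstacle, is the degenerate case $\tna=\tnb=:t_*$, in which there is no obstacle arc and \eqref{am1} cannot be invoked directly. There $r_n:=|x_n|$ attains its minimum $\ron$ at $t_*$, so $\dot{r}_n(t_*)=0$ and $|\dot{x}_n(t_*)|^2=\ron^2\dot{\tht}_n^2(t_*)$. Substituting this into the zero-energy identity \eqref{mini2} at $t_*$, where $\Vb(x_n(t_*))=\tfrac{m_1}{\al\ron^{\al}}$, gives $\dot{\tht}_n^2(t_*)=\tfrac{2m_1}{\om_n^2\al}\ron^{-2-\al}$ and hence $|J(x_n(t_*))|=\ron^2|\dot{\tht}_n(t_*)|=\sqrt{\tfrac{2m_1}{\om_n^2\al}}\,\ron^{\frac{2-\al}{2}}$, the same magnitude as in \eqref{am1}. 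Since $x_n$ winds from $\phi_-$ to $\phi_+$ with a fixed orientation, the sign of $J$ is constant, and continuity across $t_*$ again propagates this single value to all of $(-T,T)$, which is what the lemma asserts (the interval $(T,T)$ in the statement being a typo for $(-T,T)$, and the stated constant being that of \eqref{am1}).
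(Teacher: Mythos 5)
Your argument is correct and is essentially the paper's own proof: $J(x_n)$ is constant on the two outer arcs by \eqref{am3}, equals the explicit value from \eqref{am1} on the obstacle arc, and the $C^1$ regularity of $x_n$ makes $J(x_n)$ continuous, so the three constants must coincide. You go slightly further than the paper by treating the degenerate case $\tna=\tnb$ (where the paper's middle interval is empty and \eqref{am1} is unavailable) via the energy identity at the interior minimum of $r_n$, and you correctly flag the two typos in the statement (the interval should be $(-T,T)$ and the constant should carry the square root, as in \eqref{am1}).
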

 \begin{proof}

 [Lemma \ref{am4}]
 By \eqref{am1} and \eqref{am3}, there are two constants $c_1, c_2$ such that
 $$ J(x_n(t)) =
 \begin{cases}
  c_1,  & \text{ when } t \in (-T, \tna)\\
  \frac{2m_1}{\om_n^2 \al} \ron^{\frac{2 - \al}{2}}, & \text{ when } t \in (\tna, \tnb) \\
  c_2, & \text{ when } t \in (\tnb, T)
 \end{cases}
 $$
 By Lemma \ref{restricted mini}, $x_n \in C^1((-T, T), \cc)$, therefore $J(x_n) = x_n \times \dot{x}_n \in C^0((-T, T), \cc)$, which means
 $$ J(x_n(\tna)) = \lim_{t \to (\tna)^-} J(x_n(t)) = c_1 = \lim_{ t \to (\tna)^+} J(x_n(t)) = \frac{2m_1}{\om_n^2 \al} \ron^{\frac{2 - \al}{2}}.$$
 Similarly
 $$ J(x_n(\tnb)) = c_2 = \frac{2m_1}{\om_n^2 \al} \ron^{\frac{2 - \al}{2}}. $$
 \end{proof}

 With Lemma \ref{am4}, now we can compute the change of the angular variable $\ttn(t)$ when $t$ goes from $t_n^+$ to $T.$
 \begin{equation}
  \label{ac1} \ttn(T) - \ttn(\tnb) = \int_{\tnb}^{T} \dot{\tht}_n(t) \,dt = \int_{\tnb}^{T} \frac{J(x_n)}{r_n^2(t)} \, dt .
 \end{equation}
 Since $x_n$ satisfies the energy identity \eqref{mini2},
 $$ \frac{1}{2} |\dot{x}_n|^2 = \frac{1}{\om_n^2} \cdot \frac{m_1}{\al |x_n|^{\al}}.$$
 Rewrite the above equation using polar coordinates
 $$\frac{1}{2} |\dot{r}_n e^{i \ttn} + ir_n \dot{\tht}_n e^{i \ttn}|^2 = \frac{1}{2}(\dot{r}_n^2 + r_n^2 \dot{\tht}_n^2) = \frac{1}{\om_n^2} \cdot \frac{m_1}{\al r_n^{\al}}, $$

 $$ \dot{r}_n^2 = \frac{2m_1}{\om^2_n \al} \cdot \frac{1}{r^{\al}_n} -r_n^2 \dot{\tht}_n^2 = \frac{2m_1}{\om^2_n \al} \cdot \frac{1}{|r_n|^{\al}} - \frac{J^2(x_n)}{r_n^2}.$$
 By $(6)$, Lemma \ref{restricted mini}, $\dot{r}_n(t) >0$, for $ t \in (t_n^+, T].$
 Therefore
 \begin{equation}
  \label{ac2} \frac{dr_n}{dt} = \dot{r}_n = \sqrt{ \frac{2m_1}{\om^2_n \al} \cdot \frac{1}{r_n^{\al}} - \frac{J^2(x_n)}{r_n^2}}.
 \end{equation}
 The negative square root was dropped because of the following lemma.

 By \eqref{ac2}, we can change the integral variable in \eqref{ac1} from $t$ to $r_n$
 $$ \ttn(T) - \ttn(\tnb) = \int_{\ron}^{r_n(T)} \frac{J(x_n)}{r_n^2} \frac{1}{\sqrt{ \frac{2m_1}{\om^2_n \al} \cdot \frac{1}{r_n^{\al}} - \frac{J^2(x_n)}{r_n^2}}} \, dr_n $$

 By Lemma \ref{am4}, $J(x_n) = \frac{2m_1}{\om_n^2 \al} \ron^{\frac{2 - \al}{2}}$ plug it into the above equation we get
 \begin{equation} \label{ac3}
  \ttn(T) - \ttn(\tnb) = \int_{\ron}^{r_n(T)} \frac{1}{\sqrt{\ron^{\al-2} r_n^{4-\al} - r_n^2}} \,dr_n = \int_{\ron}^{r_n(T)} \frac{1}{r_n \sqrt{(\frac{\ron}{r_n})^{\al-2} -1}} \,dr_n.
 \end{equation}

 Notice that $r_n(T) = |x_n(T)| \equiv |\bar{x}(T)|$ for any $n$, we set $r^*:= |\bar{x}(T)|$. Set $\xi_n = \frac{\ron}{r_n}$, then $\,dr_n = -\xi_n^{-2} \ron \,d\xi_n$. We can simplify \eqref{ac3} to
 \begin{equation}
  \label{ac4} \ttn(T) - \ttn(\tnb) = - \int_{1}^{\frac{\ron}{r^*}} \frac{1}{\sqrt{\xi_n^{\al} - \xi_n^2}} \,d\xi_n = \int_{\frac{\ron}{r^*}}^{1} \frac{1}{\sqrt{\xi_n^{\al} - \xi_n^2}} \,d\xi_n.
 \end{equation}
 Furthermore if we set $\xi_n = \eta_n^{\frac{2}{2-\al}}$, then $ \,d\xi_n = \frac{2}{2-\al} \eta_n^{\frac{\al}{2 - \al}} \,d\eta_n.$ Plug this into \eqref{ac4}
 \begin{equation}
  \label{ac5} \ttn(T) - \ttn(\tnb) = \frac{2}{2 -\al} \int_{\frac{\ron}{r^*}}^{1} \frac{1}{\sqrt{1 - \eta_n^2}}\,d\eta_n = \frac{2}{2-\al}(\frac{\pi}{2} - \sin^{-1}(\frac{\ron}{r^*})).
 \end{equation}
 Similarly
 \begin{equation}
  \ttn(\tna) - \ttn(-T) = \frac{2}{2-\al}(\frac{\pi}{2} - \sin^{-1}(\frac{\ron}{r^*})).
 \end{equation}
 By Lemma \ref{restricted mini} says $\ttn(t)$ is monotone increasing when $ t \in [\tna, \tnb]$, therefore
 $$ \ttn(T) - \ttn(-T) \ge \ttn(T) - \ttn(\tnb) +  \ttn(\tna) - \ttn(-T) = \frac{2}{2-\al}(\pi - 2 \sin^{-1}(\frac{\ron}{r^*})). $$
 Recall that $\ron \to 0$, as $n \to +\infty$, so $$\lim_{n \to +\infty} \frac{\ron}{r^*} = 0.$$
 Therefore if $\al \in (1, 2)$, we have
 $$ \lim_{n \to +\infty} \ttn(T) - \ttn(-T) \ge \lim_{n \to +\infty}  \frac{2}{2-\al}(\pi - 2 \sin^{-1}(\frac{\ron}{r^*})) = \frac{2}{2-\al}\pi > 2 \pi. $$
 Which is absurd, because for any $n \in \mb{N},$
 $$\ttn(T) - \ttn(-T) = \phi_+ - \phi_- \le 2 \pi.$$
 At the same time when $\al =1$ and $\phi_+ - \phi_- < 2 \pi$, we get
 $$ \phi_+ - \phi_- = \lim_{n \to +\infty} \ttn(T) - \ttn(-T) \ge \lim_{n \to +\infty}  2(\pi - 2 \sin^{-1}(\frac{\ron}{r^*})) = 2\pi.$$
 which is again a contradiction.
 This finishes our proof of Lemma \ref{ro3}.

\end{proof}

Now we are ready to prove Theorem \ref{thm 4} and Theorem \ref{thm 3}. 

\begin{proof}

 [Theorem \ref{thm 4}]
 According to  Lemma $3.1$, \cite{BTV14}, which says that there is a one-to-one correspondence between minimizers of $\ab$ in $ \cup_{S > 0} \Gm_S(\xb; T)$ and minimizers of $\Mb$ in $\Gm_T(\xb),$ Theorem \ref{thm 4} is a direct corollary of Theorem \ref{collisionless}.
\end{proof}

Now let's see how we can use Theorem \ref{thm 4} to prove Theorem \ref{thm 3}. 

\begin{proof}

 [Theorem \ref{thm 3}]
 We will only give the details for case $\al \in (1,2)$, while the other is similar.
 
 The result is trivial, when $\phi_- = \phi_+$, so we will assume $0 < |\phi_+ - \phi_-| \le 2 \pi.$ Using the noncentral condition and same argument as in the proof of Lemma \ref{ro4}, it is easy to see $\bar{A}_T$ is coercive in $\Gm_T(\xb)$ and by the standard argument of calculus of variations, there is at least one $\gm \in \Gm_T(\xb)$ with $\bar{A}_T(\gm) = \bar{c}_T(\xb). $
 
 All we need to show is that $\gm$ is free of collision. By a classical result of Gordon \cite{Go77}, if $\gm$ contains a collision, then $\gm = \xb$, because $\gm$ is a minimizer. 
 
 Therefore we are done, once we can find a collisionless curve $y \in \Gm^*_T(\xb)$ with 
 $$ \bar{A}_T(y) < \bar{A}_T(\xb). $$

 By Theorem \ref{thm 4}, there is a $S> 0$ and $x \in \Gm_S(\xb; T)$, such that $\ab_S(x) < \ab_T(\xb)$ and $|x(t)| \ne 0$, for any $t \in [-S, S].$

 Let $\ep = \ab_T(\xb) - \ab_S(x) > 0$. We choose a $T_1 > \max \{ S, T\}$ and define a new curve
 $$ z(t) =
  \begin{cases}
   \xb(at+b), & \text{ if } t \in [S, T_1] \\
   x(t), & \text{ if } t \in [-S, S] \\
   \xb(at -b), & \text{ if } t \in [-T_1, -S],
  \end{cases}
  $$
  where $a = \frac{T_1-T}{T_1 -S}, b = T_1 \frac{T-S}{T_1 -S}. $
  Then
  \begin{align*}
   \ab_{T_1}(\xb) - \ab_{T_1}(z) & = \ab([T, T_1], \xb) - \ab([S, T_1]; z) + \ab([-T_1, T]; \xb) - \ab([-T_1, S]; z) \\
                                 & + \ab_T(\xb) - \ab_S(x). \\
  \end{align*}
  By the definition of $\xb$, it is obvious that
  $$ \ab([T, T_1], \xb) - \ab([S, T_1]; z) = \ab([-T_1, T]; \xb) - \ab([-T_1, S]; z). $$
  If we set
  $$ f(T_1) := \ab([T, T_1], \xb) - \ab([S, T_1]; z), $$
  then
  \begin{equation}
   \label{f ep} \ab_{T_1}(\xb) - \ab_{T_1}(z) = 2f(T_1) + \ep.
  \end{equation}

 A simple calculation shows

 \begin{align*}
  |f(T_1)| &= |(1-a) \int_T^{T_1} \ey |\dot{\xb}(t)|^2 \,dt + (1-\frac{1}{a}) \int_T^{T_1} \frac{m_1}{\al |\xb(t)|^{\al}} \,dt| \\
  & \le |1-a| \frac{2}{(2+\al)^2} \mu^{\frac{4}{2 + \al}} \int_T^{T_1} t^{-\ale} \,dt + |1-\frac{1}{a}| \frac{m_1}{\al}  \mu^{-\ale} \int_T^{T_1} t^{- \ale} \,dt \\
  & = ( C_1 \frac{|T-S|}{T_1 -S} + C_2 \frac{|S-T|}{T_1 -T}) ( T_1^{\frac{2 -\al}{2 + \al}} - T^{\frac{2-\al} {2 + \al}})
  \end{align*}
  Since $\al \in [1, 2),$ $0< \frac{2-\al}{2 + \al} <1$. Hence it is not hard to see
  $$ |f(T_1)| \to 0, \text{ as } T_1 \to 0. $$
  Therefore for $T_1$ large enough
  $$ \ab_{T_1}(\xb) - \ab_{T_1}(z) = 2f(T_1) + \ep >0. $$

  Set $\lmd = \frac{T_1}{T}$ and define
  $$ z_{\lmd}(t) = \la z(\lmd t), \quad t \in [-\frac{T_1}{\lmd}, \frac{T_1}{\lmd}]; $$
  $$ \xb_{\lmd}(t) = \la \xb(\lmd t), \quad t \in [-\frac{T_1}{\lmd}, \frac{T_1}{\lmd}].$$

  It is easy to see $\xb_{\lmd}(t) = \xb(t)$ and $z_{\lmd} \in \Gm^*_T(\xb).$ As in the proof of Lemma \ref{lmd 1}, a straight forward calculation shows
  $$ \ab_T(\xb) - \ab_T(z_{\lmd}) = \la [ \ab_{T_1}(\xb) - \ab_{T_1}( z_{\lmd})] = \la(2 f(T_1)+ \ep) > 0.$$
  By the definition of $z_{\lmd}$, $|z_{\lmd}(t)| \ne 0$, for any $t \in [-T, T]$ and we are done.
\end{proof}

\emph{Acknowledgements.} The author wish to express his gratitude to Prof. Ke Zhang for his continued support and encouragement.

\bibliographystyle{habbrv}
\bibliography{ref-time}

\end{document}